\newtheorem{theorem}{Theorem}[section]
\newtheorem{lemma}[theorem]{Lemma}
\newtheorem{thm}[theorem]{Theorem}
\newtheorem{prop}[theorem]{Proposition}
\newtheorem{rem}[theorem]{Remark}
\newtheorem{coro}[theorem]{Corollary}
\newtheorem{con}[theorem]{Conjecture}
\newcommand{\ra}{\rightarrow}
\newcommand{\mo}{\mathcal{O}}
\newcommand{\mf}{\mathcal{F}}
\newcommand{\mg}{\mathcal{G}}
\newcommand{\ma}{\mathcal{A}}
\newcommand{\mb}{\mathcal{B}}
\newcommand{\me}{\mathcal{E}}
\newcommand{\mi}{\mathcal{I}}
\newcommand{\mj}{\mathcal{J}}
\newcommand{\mk}{\mathcal{K}}
\newcommand{\mh}{\mathcal{H}}
\newcommand{\mm}{\mathcal{M}}
\newcommand{\cl}{\mathcal{L}}
\newcommand{\rr}{\mathbf{R}}
\newcommand{\rl}{\mathbf{L}}
\newcommand{\tw}{\textbf{W}}
\newcommand{\mc}{\mathcal{C}}
\newcommand{\mr}{\mathcal{R}}
\newcommand{\E}{\mathscr{E}}
\newcommand{\F}{\mathscr{F}}
\newcommand{\ttb}{\mathtt{b}}
\newcommand{\Ext}{\operatorname{Ext}}
\newcommand{\Ker}{\operatorname{Ker}}
\newcommand{\Tor}{\operatorname{Tor}}
\newcommand{\Ima}{\operatorname{Im}}
\def\<{\langle}
\def\>{\rangle}
\newcommand{\ls}{|L|}
\newcommand{\p}{\mathbb{P}}
\newcommand{\bz}{\mathbb{Z}}
\newcommand{\bq}{\mathbb{Q}}
\newcommand{\bc}{\mathbb{C}}
\begin{document}
\fontsize{12pt}{14pt} \textwidth=14cm \textheight=21 cm
\numberwithin{equation}{section}
\title{On the perverse filtration of the moduli spaces of 1-dimensional sheaves on $\mathbb{P}^2$ and P=C conjecture.}
\author{Yao Yuan}
%\address{Beijing National Center for Applied Mathematics, Academy for Multidisciplinary Studies, Capital Normal University, 100048, Beijing, P. R. China}
%\email{6891@cnu.edu.cn}
\subjclass[2010]{Primary 14D22, 14J26}
\thanks{The author is supported by NSFC 21022107.  }

\begin{abstract}Let $M(d,\chi)$ be the moduli space of semistable 1-dimensional sheaves supported at curves of degree $d$ on $\p^2$, with Euler characteristic $\chi$.  We have the Hilbert-Chow morphism $\pi: M(d,\chi)\ra |dH|$ sending each sheaf to its support.  We study the perverse filtration on $H^*(M(d,\chi),\bq)$ via map $\pi$, especially the ``P=C" conjecture posed by Kononov-Pi-Shen.  We show that ``P=C" conjecture holds for $H^{*\leq 4}(M(d,\chi),\bq)$ for any $d\geq 4$, $(d,\chi)=1$.  The main strategy is to relate $M(d,\chi)$ to the Hilbert scheme $S^{[n]}$ of $n$-points and transfer the problem to some properties on $H^*(S^{[n]},\bq)$.  We use induction on $n$ to achieve the desired properties.  Our proof involves some complicated calculations. 

~~~

\textbf{Keywords:} Perverse filtrations, Chern classes, P=C conjecture, Moduli spaces of sheaves, Hilbert schemes of points, intersection theory. \end{abstract}

\maketitle
\tableofcontents
%%%%%%%%%%%%%%%%%%%%%%%%%%%%%%%%%%%%%%%%%%%%%%%%%%%%%%%%%%%%%%%%%%%%%%%%%%%%%%%%
%%%%%%%%%%%%%%%%         Section 1     introduction    %%%%%%%%%%%%%%%%%%%
\section{Introduction.}

%%%%%%%%%%%%%%%%      Subsection  1.1.    %%%%%%%%%%%%%%%%%%%%%
\subsection{Decomposition theorem and Perverse filtration.}
We work over the complex numbers $\bc$.

Let $f:X\ra Y$ be a proper morphism between two nonsingular quasi-projective varieties with $\dim X=n_X,\dim Y=n_Y$.  For simplicity, we assume moreover every fiber of $f$ has the same dimension $r=n_X-n_Y$.  By the well-known Decomposition theorem over $\bc$ (Th\'eor\`emes 6.25, 6.2.10 in \cite{BBD}) we have 
\[Rf_{*}\bq_X[n_X]\cong \bigoplus_i \prescript{p}{}{\mh}^{i}(Rf_{*}\bq_X[n_X])[-i],\]
where $\prescript{p}{}{\mh}^{i}(Rf_{*}\bq_X[n_X])$ are all semisimple perverse sheaves on $Y$.  Define
\[\prescript{p}{}{\tau}_{\leq i}(R\pi_{*}\bq_{X}[n_X])):=\bigoplus_{j\leq i} \prescript{p}{}{\mh}^{j}(Rf_{*}\bq_X[n_X])[-i]\]

For each $m\in \bz_{\geq 0}$ we have the \emph{perverse filtration} (with respect to $f$) $P_{\bullet}$ on the cohomology group $H^m(X,\bq)$ as follows
\[P_0H^m(X,\bq)\subset P_1H^m(X,\bq)\subset\cdots\subset P_{2r}H^m(X,\bq)=H^m(X,\bq),\]
where 
$$P_iH^m(X,\bq):=\Ima\left\{ H^{m-n_X}(Y,\prescript{p}{}{\tau}_{\leq i}(R\pi_{*}\bq_{X}[n_X]))\ra H^m(X,\bq)\right\}.$$
Hence we naturally have
\[P_iH^m(X,\bq)\cong\bigoplus_{j\leq i}\mathbb{H}^{m-n_X-j}(Y,\prescript{p}{}{\mh}^{i}(Rf_{*}\bq_X[n_X])).\]

We say that a class $\gamma\in H^*(X,\bq)$ has \emph{perversity} $k$ if $\gamma\in P_kH^*(X,\bq)\setminus P_{k-1}H^*(X,\bq)$.  Perverse filtration in general is very complicated to describe since we know very few on the perverse truncation functor.  However if $Y$ is projective, denote by $\eta$ the pullback of some ample divisor on $Y$ to $X$, then we have the following description due to de Cataldo-Migliorini in \cite[Proposition 5.2.4]{CM}.
\begin{prop}\label{despintro}We have 
$$P_kH^m(X,\bq)=\sum_{i\geq 1}\left(\Ker(\eta^{n_Y+k+i-m})\cap\Ima(\eta^{i-1})\right)\cap H^m(X,\bq).$$
In particular for any $\gamma\in H^*(X,\bq)$ of perversity $k$, $\eta^n\gamma\in H^{*+2n}(M(d,\chi),\bq)$ is either zero or of perversity $k$ for any $n\in\bz_{>0}$.
\end{prop}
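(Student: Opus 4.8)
The plan is to establish the displayed identity first, and then deduce the ``in particular'' clause from the structures used in its proof. Write $K:=Rf_{*}\bq_X[n_X]$ and $Q_j:=\prescript{p}{}{\mh}^{j}(K)$; the decomposition theorem gives $K\cong\bigoplus_j Q_j[-j]$ and hence, taking hypercohomology, the bigrading
\[
H^m(X,\bq)=\bigoplus_{j}\bh^{\,m-n_Y-j}(Y,Q_j),\qquad P_kH^m(X,\bq)=\bigoplus_{j\le k}\bh^{\,m-n_Y-j}(Y,Q_j),
\]
where $j$ is the perverse degree ranging over $0\le j\le 2r$ because $f$ has $r$-dimensional fibers (relative Hard Lefschetz for an $f$-ample class giving the symmetry $Q_j\cong Q_{2r-j}$). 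Two further inputs drive the argument. First, since $\eta$ is pulled back from $Y$, cup product with $\eta$ is induced by tensoring $K$ with the ample class $\alpha\in H^2(Y)$ underlying it; by naturality this endomorphism of $K$ commutes with the perverse truncations $\prescript{p}{}{\tau}_{\le\bullet}$, so $\eta$ preserves the perverse filtration and acts on each graded piece $\bh^{\ast}(Y,Q_j)$ as cup product with $\alpha$. Second, Hard Lefschetz for the semisimple perverse sheaf $Q_j$ on the projective base $Y$ makes $V_j:=\bigoplus_a\bh^{a}(Y,Q_j)$ a finite semisimple $\mathfrak{sl}_2$-module with raising operator $\eta$ and with $\bh^{a}(Y,Q_j)$ of weight $a$, i.e.\ centered at $a=0$. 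In particular $\eta$ preserves $j$ and raises $a$ by $2$, so $\Ker(\eta^N)$ and $\Ima(\eta^{s})$ split along the bigrading and the right-hand side of the asserted formula splits as a direct sum over $j$; it therefore suffices to match the two sides inside each $V_j$.

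Fix $j$ and an irreducible summand $L(w)$ of $V_j$, and look in cohomological degree $m$, that is at the weight space of weight $a=m-n_Y-j$, which on $L(w)$ is a line exactly when $-w\le a\le w$ and $a\equiv w\ (\mathrm{mod}\ 2)$. The elementary $\mathfrak{sl}_2$-calculus on an irreducible module gives two facts: $\Ima(\eta^{i-1})$ contains this line iff $i\le\tfrac{a+w}{2}+1$, and $\Ker(\eta^{N})$ contains it iff $\eta^{N}$ annihilates weight $a$, i.e.\ iff $N\ge\tfrac{w-a}{2}+1$ (using the convention $\eta^{N}=\mathrm{id}$, so $\Ker(\eta^N)=0$, for $N\le0$). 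Substituting $N=n_Y+k+i-m=(k-j)+i-a$, the kernel condition becomes $i\ge\tfrac{a+w}{2}+1-(k-j)$. Hence some $i\ge1$ puts the line into $\Ker(\eta^{N})\cap\Ima(\eta^{i-1})$ iff the interval $\big[\tfrac{a+w}{2}+1-(k-j),\ \tfrac{a+w}{2}+1\big]$ meets $\bz_{\ge1}$; since its right endpoint is an integer $\ge1$, this occurs iff $k-j\ge0$. Thus for $j\le k$ the whole line lies in the sum (take $i=\tfrac{a+w}{2}+1$, where $N>0$), while for $j>k$ the two inequalities on $i$ are incompatible and every summand meets the line in $0$. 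Summing over all $L(w)$ and all $j$ yields
\[
\sum_{i\ge1}\big(\Ker(\eta^{\,n_Y+k+i-m})\cap\Ima(\eta^{\,i-1})\big)\cap H^m(X,\bq)=\bigoplus_{j\le k}\bh^{\,m-n_Y-j}(Y,Q_j)=P_kH^m(X,\bq),
\]
which is the claimed identity.

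For the ``in particular'' clause I use only that $\eta$ preserves the perverse filtration and acts on $\operatorname{gr}^{P}_{j}H^{\ast}(X,\bq)=\bh^{\ast}(Y,Q_j)$ as the Hard Lefschetz operator of the base. Thus $\eta^{n}$ maps $P_kH^{\ast}$ into $P_kH^{\ast}$, so the perversity of $\eta^{n}\gamma$ never exceeds $k$; and the symbol of $\eta^{n}\gamma$ in $\operatorname{gr}^{P}_{k}$ is $\eta^{n}$ applied, inside the semisimple module $V_k$, to the nonzero symbol of $\gamma$. By Hard Lefschetz this is either nonzero, in which case $\eta^{n}\gamma\notin P_{k-1}$ has perversity exactly $k$, or zero, in which case $\eta^{n}\gamma\in P_{k-1}$; for $\gamma$ concentrated in perverse degree $k$ the latter forces $\eta^{n}\gamma=0$. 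This gives the asserted dichotomy that $\eta^{n}\gamma$ is zero or of perversity $k$.

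The crux is the identification of $\eta$, at the level of the decomposition, with a bona fide Hard Lefschetz operator acting within each perverse summand $Q_j$. This packages two nontrivial consequences of the decomposition theorem --- the semisimplicity and geometric origin of the $Q_j$, so that Hard Lefschetz for perverse sheaves on the projective base $Y$ applies to $\eta$, and the compatibility of cup product by a base class with the perverse truncation --- and it must be combined with careful tracking of the two gradings and of the conventions ($\eta^{\le0}=\mathrm{id}$) so that the kernel--image description matches the perverse-degree filtration on the nose. Once this is in place, the remaining $\mathfrak{sl}_2$-bookkeeping of the second paragraph is routine.
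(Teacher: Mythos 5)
The paper offers no proof of this proposition: it is quoted from de Cataldo--Migliorini \cite[Proposition 5.2.4]{CM}, so the only comparison available is with the proof of the cited result. Your argument for the displayed identity is, in substance, a correct reconstruction of that proof: decomposition theorem, relative Hard Lefschetz to place the perverse degrees in $[0,2r]$, Hard Lefschetz for the semisimple perverse sheaves $Q_j$ on the projective base to make $V_j=\bigoplus_a\mathbb{H}^a(Y,Q_j)$ an $\mathfrak{sl}_2$-module with raising operator $\eta$, the diagonal action of a class pulled back from $Y$ on the decomposition (hence preservation of the perverse filtration), and the weight-by-weight matching of $\Ker(\eta^{n_Y+k+i-m})\cap\Ima(\eta^{i-1})$ against the perverse degree. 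I checked the $\mathfrak{sl}_2$ bookkeeping and it is right. Two points of hygiene: you define $K=Rf_*\bq_X[n_X]$ but then index hypercohomology as $\mathbb{H}^{m-n_Y-j}(Y,Q_j)$ with $j\in[0,2r]$, which is the convention attached to the shift $[n_Y]$; that is the convention that makes the formula (and the filtration $P_0\subset\cdots\subset P_{2r}$) come out right, and it should be stated, since the paper itself is inconsistent on this point. Also, ``commutes with the perverse truncations'' is not literally true (naive functoriality of $\prescript{p}{}{\tau}_{\leq i}$ applied to $K\to K[2]$ shifts the truncation index by $2$); the correct justification is the other one you give, namely that $\alpha\in H^2(Y)$ acts diagonally on any direct sum decomposition of $K$ by functoriality of $M\mapsto M\otimes\alpha$.

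The deduction of the ``in particular'' clause has a genuine gap. Perversity $k$ means $\gamma\in P_kH^m\setminus P_{k-1}H^m$; under a splitting, $\gamma=\sum_{j\le k}\gamma_j$ with $\gamma_k\neq 0$ but with the lower components $\gamma_j$, $j<k$, not necessarily zero. Your argument correctly shows that if the symbol $\eta^n\bar\gamma_k$ in $\mathrm{gr}^P_k$ is nonzero then $\eta^n\gamma$ has perversity exactly $k$, and that if it vanishes then $\eta^n\gamma\in P_{k-1}$. But in the second case you conclude $\eta^n\gamma=0$ only ``for $\gamma$ concentrated in perverse degree $k$,'' and then assert the dichotomy for every $\gamma$ of perversity $k$. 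For a non-concentrated $\gamma$ the second case leaves open that $\eta^n\gamma=\sum_{j<k}\eta^n\gamma_j$ is a nonzero class of perversity strictly less than $k$: the component $\gamma_j$ lives in weight $m-n_Y-j>m-n_Y-k$ of $V_j$, and whether $\eta^n$ kills it is governed by the irreducible constituents of $V_j$, which are independent of those of $V_k$, so nothing forces the lower components to die whenever the top one does. The dichotomy is also not a formal consequence of the displayed kernel--image identity (that identity only yields $\eta^nP_kH^m\subset P_kH^{m+2n}$). So either the clause must be restricted to classes concentrated in a single perverse degree, or an additional argument is needed; as written, the last step of your proof does not establish the statement in the generality claimed.
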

\subsection{Motivation and the main result.}
We consider the following two kinds of moduli space $\mm^H_S(L,\chi)$ and $\mm_{C_0,D}^{Higgs}(n,\chi)$ as follows.
\begin{enumerate}
\item[(A)] $\mm^H_X(L,\chi)$ parametrizes semistable sheaves $\mf$ with respect to the polarization $H$ on a projective surface $S$, which are supported on curves in the linear system $\ls$ and of Euler characteristic $\chi$.  We have a Hilbert-Chow morphism
\begin{equation}\label{HCmap}\pi_{L,\chi}:\mm^H_S(L,\chi)\ra \ls,~~\mf\mapsto \text{supp}(\mf).\end{equation}
%In general $\ls$ contains singular curves and also curves with non-reduced components.

\item[(B)] $\mm_{C_0,D}^{Higgs}(n,\chi)$ parametrizes semistable Higgs bundles $(\me,\Theta)$ with respect to the effective divisor $D$ on a smooth curve $C_0$ with $\me$ rank $n$ and Euler characteristic $\chi$.  We have the Hitchin fibration
\begin{equation}\label{HF}\pi_{D,\chi}:\mm_{C_0,D}^{Higgs}(n,\chi)\ra \bigoplus_{i=1}^n H^0(C_0,\mo_{C_0}(iD)),\end{equation}
sending $(\me,\Theta)$ to the coefficients $(-\text{tr}(\Theta),\cdots,(-1)^n\det(\Theta))$ of the characteristic polynomials $\text{char}(\Theta)$ of the morphism $\Theta:\me\ra\me(D)$.
\end{enumerate}
Both $\pi_{L,\chi}$ and $\pi_{D,\chi}$ are proper maps.  Under suitable assumptions they are also maps between nonsingular quasiprojective varieties with fibers of the same dimensions.  

When the fibration in (A) or (B) is Lagragian, the perverse filtration can have a nice coincidence with some weight filtration: we have the so called ``P=W" property conjectured in \cite{CHM}, proved in \cite{CMS} for case (A) with $S$ an abelian surface, in \cite{HMMS} and \cite{MS} independently for case (B) with $D$ the canonical divisor of $C_0$.  In particular, in these cases the perverse filtration is \emph{multiplicative}, i.e. for any $k_1,k_2,m_1,m_2$
\begin{equation}P_{k_1}H^{m_1}(X,\bq)\cdot P_{k_2}H^{m_2}(X,\bq)\subset P_{k_1+k_2}H^{m_1+m_2}(X,\bq),
\end{equation}
where $\cdot$ denotes the cup product on $H^*(X,\bq)$.

Now in (A) let $S=\p^2$, then the Hilbert-Chow morphism is not Lagrangian.  It is interesting to ask in this case whether there is any analog of ``P=W" phenomenon, and whether the perverse filtration is multiplicative.  Let $H$ be the hyperplane class in $\p^2$.  Let $M(d,\chi)$ be the moduli space of stable sheaves supported at curves in $|dH|$ and with Euler characteristic $\chi$.  $M(d,\chi)$ is smooth since $\p^2$ is Fano.  We also ask $M(d,\chi)$ to be projective $\Leftrightarrow (d,\chi)=1$. 

By Pi-Shen's work in \cite{PS}, a minimal set of generators of $H^*(M(d,\chi),\bq)$ can be chosen as 
\begin{equation}\label{gentintro}\Sigma:=\{c_0(2),c_2(0),c_k(0),c_{k-1}(1),c_{k-2}(2), k\in {3,\cdots,d-1}\}.\end{equation}

In \cite{KPS}, Kononov-Pi-Shen call the subspace $$H^{*\leq 2d-4}(M(d,\chi),\bq)\subset H^*(M(d,\chi),\bq)$$ the \emph{free part} of the cohomology and they define the \emph{Chern filtration} of the free part 
\[C_0H^{*\leq 2d-4}(M(d,\chi),\bq)\subset C_1H^{*\leq 2d-4}(M(d,\chi),\bq)\subset\cdots\subset H^{*\leq 2d-4}(M(d,\chi),\bq),\]
where the $k$-th piece $C_kH^{*\leq 2d-4}$ is defined as the span of all monomials
$$\prod_{i=1}^sc_{k_i}(j_i)\in H^{*\leq 2d-4}(M(d,\chi),\bq),\text{ and }\sum_{i=1}^s k_i\leq k,$$
where $c_{k_i}(j_i)$ are generators given in (\ref{gentintro}).

Kononov-Pi-Shen pose the following conjecture which is also called ``P=C" conjecture.
\begin{con}[Conjecture 0.3 in \cite{KPS}]\label{kpscon} For $d\geq 3$, we have
\[P_kH^{*\leq 2d-4}(M(d,\chi),\bq)=C_kH^{*\leq 2d-4}(M(d,\chi),\bq).\]
\end{con}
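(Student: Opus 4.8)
The plan is to prove the two inclusions $C_\bullet \subseteq P_\bullet$ and $P_\bullet \subseteq C_\bullet$ on the free range $H^{*\le 2d-4}(M(d,\chi),\bq)$ separately, and then force the second from the first by a dimension count on graded pieces. I would begin with the inclusion $C_k H^{*\le 2d-4}\subseteq P_k H^{*\le 2d-4}$, which reduces to two points: that each tautological generator $c_k(j)$ from (\ref{gentintro}) has perversity at most $k$, and that the perverse filtration is multiplicative on the free range, so that a monomial $\prod_i c_{k_i}(j_i)$ with $\sum_i k_i\le k$ lands in $P_k$. For the perversity bound I would realize the $c_k(j)$ as K\"unneth components of $\operatorname{ch}(\mathcal{F})$ for a universal sheaf $\mathcal{F}$ on $M(d,\chi)\times\p^2$ and test membership in $P_k$ through the de Cataldo--Migliorini description of Proposition \ref{despintro}: since cup product with $\eta$ preserves perversity, the perversity of a class is detected by the kernel/image conditions for powers of $\eta$, and these are controlled by the behaviour of $\mathcal{F}$ along the generic fibres $\operatorname{Jac}(C)$ of $\pi$. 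Multiplicativity on the free range is not formal because $\pi$ is not Lagrangian; I would deduce it from the perversity bounds for the generators together with the absence of relations among the $c_k(j)$ in degrees $\le 2d-4$.

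The heart of the argument is the reverse inclusion, which I would obtain numerically. Granted the first step, it suffices to prove $\dim_\bq \operatorname{gr}^{C}_k H^m = \dim_\bq \operatorname{gr}^{P}_k H^m$ for all $k$ and all $m\le 2d-4$. On the Chern side this is purely combinatorial: in the free range the generators satisfy no relations, so $\operatorname{gr}^{C}_k H^m$ is spanned by the monomials of cohomological degree $m$ and Chern-index exactly $k$, a count given by an explicit generating function. On the perverse side the perverse Betti numbers of $M(d,\chi)$ are not directly accessible, and here I would invoke the relation between $M(d,\chi)$ and the Hilbert scheme $S^{[n]}$ of $n$ points to transport the required perverse-number identities into a statement about $H^*(S^{[n]},\bq)$. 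That statement I would then establish by induction on $n$, exploiting the recursive structure of the cohomology of Hilbert schemes of points through the incidence varieties relating $S^{[n-1]}$, $S^{[n]}$ and $S$.

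Combining the inclusion of the first step with the graded-dimension equality of the second upgrades the inclusion to the equality $P_k H^{*\le 2d-4}=C_k H^{*\le 2d-4}$ asserted in the conjecture. The decisive obstacle is the numerical step in its full generality: the decomposition theorem gives no effective handle on the perverse sheaves $\prescript{p}{}{\mh}^i(R\pi_*\bq_{M}[\,\cdot\,])$, so the whole weight of the proof rests on the Hilbert-scheme comparison and on an induction that must propagate a rigid numerical coincidence simultaneously through every cohomological degree up to $2d-4$ and for every $d\ge 3$. I expect the inductive calculations to grow rapidly heavier as $m$ increases, so that the method plausibly yields the equality unconditionally only in an initial sub-range of $m$ (and perhaps only for $d\ge 4$); pushing it to the full bound $m\le 2d-4$ for all $d\ge 3$ would seem to need either a closed formula for the perverse numbers of $M(d,\chi)$ or a multiplicativity statement valid beyond the free part, neither of which appears to follow from the induction alone.
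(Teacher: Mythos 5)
The statement you are trying to prove is Conjecture~\ref{kpscon}, and the paper does not prove it: it remains open. What the paper actually establishes is only the restriction to cohomological degree at most $4$ (Theorem~\ref{mainintro}, i.e.\ Corollary~\ref{mainco}), for $d\geq 4$ and $(d,\chi)=1$. So there is no ``paper's own proof'' of the full statement to match your proposal against, and your own closing paragraph correctly diagnoses why: the method does not close up in the full range $m\leq 2d-4$. Your sketch should therefore be read as a programme, not a proof, and as a programme it has two genuine gaps beyond the one you acknowledge. First, the inclusion $C_kH^{*\leq 2d-4}\subseteq P_kH^{*\leq 2d-4}$ requires (a) that each generator $c_k(j)$ has perversity at most $k$ and (b) multiplicativity of the perverse filtration on the free part. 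Neither is known. Point (a) is not ``controlled by the behaviour of $\mathcal{F}$ along the generic fibres'': the de Cataldo--Migliorini criterion of Proposition~\ref{despintro} involves kernels and images of powers of $\eta$ on all of $H^*(M(d,\chi),\bq)$, and verifying it for $c_k(j)$ is precisely the hard computation. Point (b) does not follow from (a) plus the absence of relations among the generators in low degree: multiplicativity is a statement about products of arbitrary classes of given perversities, and for a non-Lagrangian fibration there is no formal mechanism producing it; in the Lagrangian P=W setting it is a theorem with substantial proof, not a consequence of generator bounds. Second, the graded-dimension count $\dim\operatorname{gr}^C_kH^m=\dim\operatorname{gr}^P_kH^m$ requires the perverse Betti numbers of $M(d,\chi)$, which are not currently computable; the Hilbert-scheme comparison in the paper is not used for any such count.

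For contrast, the paper's route to its partial result is different in structure from what you propose. It does not attempt a dimension count at all. Instead it takes the inclusion-type input from \cite[\S 2.4]{KPS} together with Remark~\ref{c0c2}, and reduces Corollary~\ref{mainco} to showing that the three specific generators $c_1(2)$, $c_2(1)$, $c_3(0)$ have exact perversities $1$, $2$, $3$ (Theorem~\ref{main1}). To compute these it replaces $M(d,\chi)$ by the relative Hilbert scheme $\mc^n_{\ls}$ via the projective bundle $f:\mc^n_{\ls^{int}}\to M(d,\chi)_{int}$ of Lemma~\ref{ctom}, translates the perversity condition into the intersection-theoretic criterion of Proposition~\ref{fmtoh} involving the class $P^{[n]}(\xi)$ on $S^{[n]}\times\ls$, and proves the needed identities (Lemmas~\ref{pnt2}, \ref{pnt31}, \ref{pnt32}, \ref{pnt41}) by induction on $n$ through the incidence variety $S^{[n,n+1]}$. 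Your idea of using the Hilbert scheme and an induction on $n$ is in the right spirit, but in the paper it serves to pin down the perversity of individual tautological classes, not to match perverse and Chern graded dimensions; and the computations already become heavy at cohomological degree $4$, which is why the paper stops there.
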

Conjecture \ref{kpscon} can be viewed as an analog of ``P=W" conjecture for Lagragian fibration cases.  Our result is as follows.  
\begin{thm}[Corollary \ref{mainco}]\label{mainintro} For any $d\geq 4$ and $\chi$ co-prime to $d$, we have
\[P_kH^{*\leq 4}(M(d,\chi),\bq)=C_kH^{*\leq 4}(M(d,\chi),\bq).\]
\end{thm}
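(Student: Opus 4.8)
The plan is to reduce the statement to an explicit finite computation in cohomological degrees $0$, $2$ and $4$, and then to push the nontrivial input through the relation with Hilbert schemes of points. First I would record the cohomological degrees of the generators in (\ref{gentintro}): writing $c_k(j)$ for the Chern-degree-$k$ class obtained by pairing the universal sheaf against the fundamental/line/point class on $\p^2$, one has $c_k(j)\in H^{2(k+j-1)}(M(d,\chi),\bq)$. Thus the only generators landing in $H^{\leq 4}$ are $c_0(2),c_2(0)$ in degree $2$ and $c_1(2),c_2(1),c_3(0)$ in degree $4$; the last three exist only when $3\in\{3,\dots,d-1\}$, which is exactly why the hypothesis $d\geq 4$ appears. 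Consequently $H^2$ is spanned by $\{c_0(2),c_2(0)\}$ and $H^4$ by the six classes $\{c_0(2)^2,\,c_1(2),\,c_2(1),\,c_0(2)c_2(0),\,c_3(0),\,c_2(0)^2\}$ of Chern weights $0,1,2,2,3,4$. The first task is therefore to prove these are bases, i.e. $\dim H^2=2$ and $\dim H^4=6$, and to read off the graded dimensions $(1,0,1)$ and $(1,1,2,1,1)$ of the Chern filtration in degrees $2$ and $4$.

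Next I would fix $\eta:=c_0(2)$, a multiple of the pullback $\pi^*\mo_{|dH|}(1)$ of the ample generator, and exploit Proposition \ref{despintro}. Since $\eta^{N+1}=\pi^*h^{N+1}=0$ with $N=\dim|dH|$, the $i=1$ term of that description already gives $P_mH^m=H^m$, so every class in $H^m$ has perversity $\leq m$; in particular $c_2(0)\in P_2H^2$ and $c_2(0)^2\in P_4H^4$ for free. Powers $\eta^s$ are pullbacks and hence of perversity $0$, so $c_0(2)\in P_0$ and $c_0(2)^2\in P_0$, while the ``in particular'' clause of Proposition \ref{despintro} shows $c_0(2)c_2(0)=\eta\,c_2(0)$ has the same perversity as $c_2(0)$, hence lies in $P_2$. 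What remains for the inclusion $C_k\subseteq P_k$ are the three bounds $\mathrm{perv}\,c_1(2)\leq 1$, $\mathrm{perv}\,c_2(1)\leq 2$, $\mathrm{perv}\,c_3(0)\leq 3$, i.e. $\mathrm{perv}\,c_k(j)\leq k$ for the degree-$4$ generators. Each of these I would obtain from Proposition \ref{despintro} by locating the class in $\Ima(\eta^{i-1})$ for a suitable $i$ and reducing to the vanishing of a specific $\eta$-power product.

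With the inclusion $C_k\subseteq P_k$ secured it suffices to match graded dimensions: if $\dim\operatorname{gr}^P_kH^m=\dim\operatorname{gr}^C_kH^m$ for all $k$ and $m\leq 4$, then equality of the two filtrations follows termwise. This is where I would transfer the problem to the Hilbert scheme $S^{[n]}$ of points on $S=\p^2$. Using the correspondence between $M(d,\chi)$ and $S^{[n]}$ (for $\chi$ coprime to $d$, realising a stable sheaf through its sections as a configuration of points on its support curve, and comparing tautological classes), both the low-degree perverse graded dimensions and the sharp lower bounds $c_k(j)\notin P_{k-1}$ become tautological intersection numbers on $S^{[n]}$. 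Concretely, to show $c_k(j)$ has perversity \emph{exactly} $k$ I would exhibit, via Proposition \ref{despintro}, a product $\eta^a\cdot(\text{monomial})$ whose nonvanishing on $M(d,\chi)$ is equivalent to a nonzero integral on $S^{[n]}$, and then evaluate that integral.

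The main obstacle is precisely this last evaluation. The relevant integrals on $S^{[n]}$ do not present themselves in closed form, so I would set up an induction on $n$ via the nested Hilbert scheme $S^{[n-1,n]}$, using the incidence correspondence to $S^{[n-1]}\times S^{[n]}$ to express a tautological integral on $S^{[n]}$ through ones on $S^{[n-1]}$; the delicate, calculation-heavy part is bookkeeping how the classes $c_k(j)$ and the ample class $\eta$ pull back and push forward through this correspondence. This is also what forces the truncation to $H^{\leq 4}$: only finitely many low-weight monomials and a bounded range of $\eta$-powers must be controlled. Once the relevant integrals are shown to be nonzero (giving the lower bounds) and the perverse graded dimensions are computed to equal the Chern graded dimensions $(1,0,1)$ in degree $2$ and $(1,1,2,1,1)$ in degree $4$, the equality $P_kH^{*\leq 4}=C_kH^{*\leq 4}$ follows and establishes Conjecture \ref{kpscon} in this range.
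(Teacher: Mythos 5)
Your plan is correct and follows essentially the same route as the paper: reduce to the explicit generators $c_0(2),c_2(0)$ in degree $2$ and $c_0(2)^2,c_1(2),c_2(1),c_0(2)c_2(0),c_3(0),c_2(0)^2$ in degree $4$, use the de Cataldo--Migliorini description (Proposition \ref{despintro}) and the known behaviour of $c_0(2)=\xi$ and the relatively ample class $c_2(0)$ for the easy perversities, and then pin down the exact perversities of $c_1(2),c_2(1),c_3(0)$ by transferring to tautological intersection computations on $S^{[n]}$ via the points-on-a-curve correspondence, carried out by induction on $n$ through the nested Hilbert scheme $S^{[n,n+1]}$. This is precisely the paper's strategy (Remark \ref{c0c2}, Proposition \ref{fmtoh}, Theorem \ref{main3} and the recursions of \S 3.3--3.4), with only cosmetic differences in how the endgame is phrased (matching graded dimensions versus exact perversity of each generator together with the inclusion from \cite{KPS}).
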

%\begin{rem}Let $n^{i,j}_P$ and $n^{i,j}_C$ be the dimension of the graded piece of perverse filtration and Chern filtration respectively.  In \cite{KPS} they also conjectured for $i+j\leq 2(d-2)$
%\begin{equation}\label{np=c}n^{i,j}_P=n^{i,j}_C.\end{equation}
%Despite of our result, we still don't know whether (\ref{np=c}) is true or not.
%\end{rem}

Our strategy to prove Theorem \ref{mainintro} is to relate the moduli space $M(d,\chi)$ to the Hilbert scheme $S^{[n]}$ of $n$-points on $S$, then we transfer the perversity of those generators $c_k(j)$ to some intersection properties of some classes on $S^{[n]}$, and finally we do a lot of computations to show those properties. 
\begin{rem}In principle, following our strategy one can compute the perversity of more elements in $H^{*\leq 2d-4}(M(d,\chi),\bq)$.  But it may cost one huge amount of computations.  Hence at this moment we just finish the case $H^{*\leq 4}(M(d,\chi),\bq)$ and leave others to the future study.  
\end{rem}

\subsection{The plan of the paper.}
We will introduce some notations and conventions in next subsection.  In \S 2 we review in more details on the cohomology theory of $M(d,\chi)$ and the set-up of the ``P=C" conjecture.  In \S 3.1 and \S 3.2 we built the relation between $M(d,\chi)$ and $S^{[n]}$, and then transfer our problem to $S^{[n]}$.  In \S3.3 and \S 3.4 we review the definition of incidence varieties for Hilbert schemes of points, and then deduce some recursion formulas which will play a key role in our proof.  In \S 4 we will carry on explicit computations where two technical lemmas have their proofs moved to the appendix.  
%%%%%%%%%%%%%%%%%%%.           Subsection 1.3.                 %%%%%%%%%%%%%%%%%%%
\subsection{Notations \& Conventions.}\label{NC}
\begin{itemize}
\item For any projective smooth scheme $X$, denote by $A^*(X)=\oplus_{r\geq 0}A^r(X)$ the Chow ring (with $\bq$-coefficients) of $X$ and denote by $K(X)$ the Grothendieck group of coherent sheaves over $X$.  For any closed subscheme $Y\subset X$ (coherent sheaf $\mf$ over $X$, resp.), denote by $[Y]$ ($[\mf]$, resp.) its class in $A^*(X)$ ($K(X)$, resp.).

\item
Except otherwise stated, we always let $S=\p^2_{\bc}$ with $K_S$ the canonical divisor class and $H$ the hyperplane class.  Hence $K_S=-3H$.  
\item For any map $f:X\ra Y$, define $f^S:=id_S\times f: S\times X\ra S\times Y$. 

%\item By abuse of notations, we use the same letter for any closed subvariety of $S$ and its class in $A^*(S)$.  We have $A^*(S)\cong \bz \mathbf{1}\oplus \bz H\oplus \bz H^2$.  

%\item Let $S^{[n]}$  be the Hilbert scheme of $n$ points on $S$.  It is well-known that $A^*(S^{[n]})\cong H^*(S^{[n]},\bz)$ as rings (see e.g. \cite{mark}).   

%\item We fix an isomorphism $A^{2n}(S^{[n]})\xrightarrow{\cong}\bz$ such that the image of every class $\theta\in A^{2n}(S^{[n]})$ which we also denote by $\theta$ is equal to $\int_{S^{[n]}}\theta$. 

%\item By abuse of notations, we use the same letter for any closed subvariety of $S$ and its class in $A^*(S)$, and for any coherent sheaf on $S$ and its class in $K(S)$.

%\item Let $C$ be a curve.  Denote by $\omega_C$ and $g_C$ the canonical divisor and the genus of $C$ respectively. 
% 
\item We use the same letter for a line bundle and its divisor class, for instance, $L_1\otimes L_2$ is the tensor of two line bundles, and $L_1\cdot L_2$ is the intersection of their divisor classes.  Define $L_1^k=\underbrace{L_1\circ\cdots\circ L_1}_{k~times}$.

\item For any map $f:X\ra Y$, we sometimes (if there is no confusion) use the same letter for a class in $H^*(Y)$ and its pullback to $H^*(X,\bq)$.
%\item For any sheaf $\mf$ and $i\geq 1$, let $c_i(\mf)$ ($s_i(\mf)$, resp.) be its $i$-th Chern (Segre, resp.) class.  We also define $c_0(\mf)=1=s_0(\mf)$.

%\item For two (classes of) sheaves $\mf,\mg\in K(X)$ with $X$ projective smooth, we define their flat tensor 
%\begin{equation}\label{flten}\mf\cdot\mg:=\sum_{i\geq0}(-1)^i\Tor^i(\mf,\mg)\in K(X).\end{equation}
%The multiplication $\cdot$ in (\ref{flten}) defines a commutative ring structure on $K(X)$.

%\item For any map $f:X\ra Y$ with $X,Y$ smooth projective, let $f^!:K(Y)\ra K(X)$ be the flat pull-back, i.e. $f^!\mf:=f^*\mf$ for any $\mf$ locally free and $f^!$ is a ring homomorphism.  Moreover for any $\mg\in K(Y)$, we have
%$$c_i(f^!(\mg))=f^*c_i(\mg),~s_i(f^!(\mg))=f^*s_i(\mg).$$
%If $f$ is flat, then $f^!=f^*$.

%\item For a multiplication $a_1\cdots a_m$, we let
%\[a_1\cdots \widehat{a_k}\cdots a_m:=a_1\cdots a_{k-1}a_{k+1}\cdots a_m.\] 

%\item We make the convention that for $k=-1$, $\prod_{j=0}^k P_j=1$ and $\sum_{j=0}^k P_j=0$ with $P_j$ any numbers, classes, or maps etc.
\end{itemize}

\subsection{Acknowledgements.} %I would like to thank Lin Yinbang for drawing my attention to Johnson's paper \cite{Joh} which leads me to the Marian-Oprea-Pandharipande conjecture on rank zero Segre class.  I thank Miaofen Chen for her encouragement.
I would like to thank Kononov-Pi-Shen for their conjecture which motivates this paper.
I thank the referees in advance for their attention to read the paper.

%%%%%%%%%%%%%%%%%%%      Section 2   %%%%%%%%%%%%%%%%%
\section{The cohomology ring of $M(d,\chi)$ and the P=C conjecture.}
\subsection{Perverse filtration on the cohomology groups.}  Let $\chi$ be an integer.  Let $M(L,\chi)^{ss}$ be the moduli space of semistable 1-dimensional sheaves with determinant $L$ and Euler characteristic $\chi$ over a projective surface $S$.  All (S-equivalence classes of) sheaves in $M(L,\chi)^{ss}$ are supported on curves in the linear system $\ls$.  Let $M(L,\chi)\subset M(L,\chi)^{ss}$ parametrize stable sheaves in $M(L,\chi)^{ss}$.

We have the Hilbert-Chow morphism
\begin{equation}\label{hcm}\pi^{ss}:M(L,\chi)^{ss}\ra\ls,~\mf\mapsto \text{supp}\mf.\end{equation}
The map $\pi^{ss}$ in (\ref{hcm}) is not only a set-theoretic map but also a morphism of algebraic schemes (see e.g. \cite[Proposition 3.0.2]{Yuan1}).  The fiber of $\pi^{ss}$ over an integral curve $C$ is isomorphic to the (compactified) Jacobian of $C$ while the fibers over non-integral curves can have more than one irreducible component.  Denote by \begin{equation}\label{hcms}\pi:M(L,\chi)\ra\ls\end{equation} the Hilbert-Chow morphism restricted to $M(L,\chi)$.  For $X$ Fano, $M(L,\chi)$ is smooth.  We also have the following result.
\begin{prop}[Corollary 1.3 in \cite{Yuan9}]\label{dimest}If $S$ is Fano or with $K_S$ trivial, and if moreover $\ls$ contains an integral curve, then all the fibers of the Hilbert-Chow morphism $\pi$ in (\ref{hcms}) have dimension $g_L$, where $g_{L}$ is the arithmetic genus of any curve in $\ls$. 
\end{prop}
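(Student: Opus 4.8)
The plan is to prove the two inequalities $\dim\pi^{-1}(C)\geq g_L$ and $\dim\pi^{-1}(C)\leq g_L$ for every $C\in\ls$ separately; the first is soft, and the second carries all of the content.

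First I would establish the lower bound. Since geometric integrality is an open condition in the flat proper family of curves $\mathcal{C}\to\ls$ parametrized by the irreducible variety $\ls$, and $\ls$ contains an integral curve by hypothesis, the integral curves are dense in $\ls$; over such a $C$ the fiber $\pi^{-1}(C)$ is the compactified Jacobian of $C$, of dimension $p_a(C)=g_L$. Next I would compute $\dim M(L,\chi)$ by deformation theory: at a stable $\mf$ the tangent space is $\Ext^1_S(\mf,\mf)$ and, by Serre duality, the obstruction space is $\Ext^2_S(\mf,\mf)\cong\Hom_S(\mf,\mf\otimes K_S)^\vee$. When $S$ is Fano, $L\cdot K_S<0$ makes the reduced Hilbert polynomial of $\mf\otimes K_S$ strictly smaller than that of $\mf$, so stability gives $\Hom_S(\mf,\mf\otimes K_S)=0$ and hence $\Ext^2_S(\mf,\mf)=0$; when $K_S$ is trivial, $\Ext^2_S(\mf,\mf)\cong\Hom_S(\mf,\mf)^\vee=\bc$. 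Combined with $\chi_S(\mf,\mf)=-L^2$ this gives $\dim M(L,\chi)=L^2+1$ in the Fano case and $L^2+2$ in the trivial case; in either case a Riemann--Roch computation on $S$ yields $\dim M(L,\chi)-g_L=\dim\ls$. Thus the generic fiber has dimension $g_L$, so $\pi$ is dominant, and since $\pi$ is proper its image is closed, hence $\pi$ is surjective. Upper semicontinuity of fiber dimension then forces $\dim\pi^{-1}(C)\geq g_L$ for all $C\in\ls$.

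The substance is the reverse inequality $\dim\pi^{-1}(C)\leq g_L$ for arbitrary, possibly reducible or non-reduced, $C\in\ls$. When $K_S$ is trivial I would invoke the holomorphic symplectic structure: $M(L,\chi)$ is then an irreducible holomorphic symplectic variety and $\pi$ is a Lagrangian fibration onto $\ls\cong\p^{N}$ with $N=\tfrac12\dim M(L,\chi)=g_L$, so by Matsushita's theorem every fiber is equidimensional of dimension $g_L$. The Fano case admits no such structure and must be treated by a direct sheaf-theoretic estimate, which I expect to be the main obstacle.

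For the Fano upper bound I would fix $C=\sum_i m_iC_i\in\ls$ and bound the family of stable pure sheaves $\mf$ on $S$ whose Fitting support is the cycle $\sum_i m_i[C_i]$ with $\chi(\mf)=\chi$. The idea is to filter such an $\mf$ by the reduced components of its support, producing a filtration whose successive quotients are pure sheaves on the $C_i$ of prescribed generic ranks summing to the $m_i$, and then to bound the family by (i) the dimension of the moduli of the quotients on each $C_i$ and (ii) the dimension of the admissible extension/gluing data, the latter controlled by the relevant $\Ext^1_S$ groups between the pieces. The anti-ampleness of $K_S$ is used to annihilate the accompanying $\Hom$ and $\Ext^2$ groups, keeping the extension count sharp, while the constancy of $p_a(C)=g_L$ in the flat family $\mathcal{C}\to\ls$ forces the separate contributions to add up to exactly $g_L$. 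The delicate point is that over the non-reduced locus the fiber contains sheaves that are pure on $S$ but \emph{not} torsion-free on the scheme $C$ --- for example a rank-$m_i$ bundle on a reduced component in place of a rank-$1$ sheaf on the thickening --- so the naive tangent-space count $\dim\Ext^1_C(\mf,\mf)$ overestimates the fiber dimension at singular points; obtaining a bound that holds uniformly across all such strata is where the real work lies.
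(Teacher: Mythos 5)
The paper does not prove this proposition at all: it is quoted verbatim as Corollary 1.3 of the reference \cite{Yuan9} (``Sheaves on non-reduced curves in a projective surface''), so the comparison here is really between your sketch and that external result. Your overall skeleton is the right one, and the lower-bound half is essentially complete: integral curves are dense in $\ls$, the fiber over an integral curve is the compactified Jacobian of dimension $g_L$, the vanishing of $\Ext^2_S(\mf,\mf)$ (from stability plus $L\cdot K_S<0$, or Serre duality in the trivial-$K_S$ case) gives $\dim M(L,\chi)=L^2+1$ resp.\ $L^2+2$, and the Riemann--Roch bookkeeping $\dim M-g_L=\dim\ls$ plus lower semicontinuity of fiber dimension for a dominant morphism yields $\dim\pi^{-1}(C)\geq g_L$ everywhere.

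The genuine gap is that the upper bound over non-integral $C$ in the Fano case --- which is the entire content of the cited corollary --- is not actually carried out. You describe the strategy (filter $\mf$ by the reduced components of $\sum_i m_iC_i$, bound the moduli of the graded pieces and the extension data, use $K_S$ anti-ample to kill the offending $\Hom$ and $\Ext^2$ groups) and then explicitly concede that ``obtaining a bound that holds uniformly across all such strata is where the real work lies.'' That uniform estimate over all filtration types, generic-rank distributions, and non-locally-free degenerations on thickened components is precisely what \cite{Yuan9} is written to supply; without it the proposition is not proved. Two smaller points: (i) in the trivial-$K_S$ case, Matsushita's theorem does not apply off the shelf --- $\pi$ in (\ref{hcms}) is defined on the stable locus, which need not be projective, $M(L,\chi)$ need not be irreducible holomorphic symplectic (e.g.\ for abelian surfaces one must first quotient by or restrict along the Albanese, and $\dim\ls\neq g_L$ there), so this case also needs the direct argument; (ii) your surjectivity step invokes properness of $\pi$, which holds for $\pi^{ss}$ but not a priori for its restriction to the stable locus, though this is easily repaired since the lower bound only needs each component of $M(L,\chi)$ to have dimension $L^2+1$ resp.\ $L^2+2$.
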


Let $S$ be any Fano surface and let $M(L,\chi)=M(L,\chi)^{ss}=:M$, then $\pi$ is a proper map between two nonsingular projective varieties and has all fibers of dimension $g_L$.  Define $\mathtt{b}:=\dim\ls=n_M-g_L$.  For each $m\in \bz_{\geq 0}$ we have the \emph{perverse filtration} $P_{\bullet}$ with respect to $\pi$ on the cohomology group $H^m(M,\bq)$ as follows
\[P_0H^m(M,\bq)\subset P_1H^m(M,\bq)\subset\cdots\subset P_{2g_L}H^m(M,\bq)=H^m(M,\bq),\]
where 
$$P_iH^m(M,\bq):=\Ima\left\{ H^{m-n_M}(\ls,\prescript{p}{}{\tau}_{\leq i}(R\pi_{*}\bq_{M}[n_M]))\ra H^m(M,\bq)\right\}.$$
We say that a class $\gamma\in H^*(M,\bq)$ has \emph{perversity} $k$ if $\gamma\in P_kH^*(M,\bq)\setminus P_{k-1}H^*(M,\bq)$.

Denote by $\xi$ both the hyperplane class on $\ls$ and its pullback to $M(d,\chi)$ via the Hilbert-Chow morphism $\pi$.  Then the following proposition follows from Proposition \ref{despintro}
\begin{prop}\label{desp}We have 
$$P_kH^m(M(d,\chi),\bq)=\sum_{i\geq 1}\left(\Ker(\xi^{\ttb+k+i-m})\cap\Ima(\xi^{i-1})\right)\cap H^m(M(d,\chi),\bq).$$
In particular for any $\gamma\in H^*(M(d,\chi),\bq)$ of perversity $k$, $\xi^n\gamma\in H^{*+2i}(M(d,\chi),\bq)$ is either zero or of perversity $k$ for any $n\in\bz_{>0}$.
\end{prop}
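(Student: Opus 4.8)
The plan is to obtain Proposition \ref{desp} as a direct specialization of the de Cataldo--Migliorini description (Proposition \ref{despintro}) applied to the Hilbert--Chow morphism $\pi:M(d,\chi)\ra\ls$. Since Proposition \ref{despintro} is a general statement about any proper map $f:X\ra Y$ between nonsingular varieties with equidimensional fibers and $Y$ projective, the only real work is to check that $\pi$ meets those hypotheses and then to translate the numerical indices into the notation of the present setting.

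First I would verify the geometric hypotheses. Since $S=\p^2$ is Fano, $M(d,\chi)$ is nonsingular; and because $M(d,\chi)=M(d,\chi)^{ss}$ (no strictly semistable sheaves occur in the coprime situation of the ambient set-up), it is projective. The base $\ls=|dH|$ is a projective space, hence a smooth projective variety, and its hyperplane class is ample; the morphism $\pi$ is proper, being a morphism of projective schemes. Thus $\pi$ is a proper morphism between nonsingular projective varieties with projective target, exactly the framework in which Proposition \ref{despintro} is valid, and $\xi$ — the pullback of the hyperplane class on $\ls$ — plays the role of the ample class $\eta$ there.

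Second I would check the equidimensionality of the fibers, which is the single hypothesis that is not formal. Here I would invoke Proposition \ref{dimest}: as $S=\p^2$ is Fano and $|dH|$ contains integral curves (for instance smooth plane curves of degree $d$), every fiber of $\pi$ has dimension $g_L$. This is consistent with the required $r=n_X-n_Y$, because by definition $\ttb=\dim\ls=n_M-g_L$, so that $r=n_M-\ttb=g_L$. In particular the numerical index $n_Y=\dim Y$ appearing in Proposition \ref{despintro} becomes $\ttb$ here.

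Finally, with $X=M(d,\chi)$, $Y=\ls$, $\eta=\xi$ and $n_Y=\ttb$, substituting into the displayed formula of Proposition \ref{despintro} yields at once
\[
P_kH^m(M(d,\chi),\bq)=\sum_{i\geq 1}\left(\Ker(\xi^{\ttb+k+i-m})\cap\Ima(\xi^{i-1})\right)\cap H^m(M(d,\chi),\bq),
\]
and the ``in particular'' clause is the corresponding specialization. Because the argument is a pure transcription, there is no genuine obstacle; the only point demanding attention is the equidimensionality of the fibers, which is precisely what fixes $n_Y=\ttb$ as the correct index and hence is why the explicit appeal to Proposition \ref{dimest}, together with the existence of an integral curve in $|dH|$, cannot be omitted.
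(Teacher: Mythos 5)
Your proposal is correct and follows essentially the same route as the paper: Proposition \ref{desp} is stated there as an immediate consequence of Proposition \ref{despintro}, after the preceding paragraph records exactly the facts you verify, namely that $\pi$ is a proper morphism between nonsingular projective varieties whose fibers all have dimension $g_L$ (via Proposition \ref{dimest}), so that $n_Y=\ttb$ and $\eta=\xi$. Your write-up merely makes the hypothesis-checking explicit, which is a harmless elaboration of the paper's one-line derivation.
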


\subsection{Generators for the cohomology ring.}
Let $S=\p^2$ and $L=dH$ with $d\in\bz_{>0}$.  If $(d,\chi)=1$, then $M(L,\chi)^{ss}=M(L,\chi)=:M(d,\chi)$ and it is a fine moduli space.    
%Then $M(d,\chi):=M(L,\chi)$ is projective and smooth of dimension $d^2+1$.  
Denote by $\E$ a universal sheaf over $S\times M(d,\chi)$.  For every Chern class $c_i(\E)$ of $\E$, we have the K\"unneth decomposition
\[c_i(\E)=H^0\otimes e_i(2)+H\otimes e_i(1)+H^2\otimes e_i(0)\]
with $e_i(j)\in A^{i+j-2}(M(d,\chi))$.  Notice that $H^*(\p^2,\bz)\cong \bz[H]/(H^3)$. 

By \cite[Theorem 1 and Theorem 2]{mark},  we have the cohomology ring $H^*(M(d,\chi),\bz)$ is torsion-free and isomorphic to the integral Chow ring $A^*(M(d,\chi))_{\bz}$.  %Hence the odd Betti numbers of $M(d,\chi)$ are all zero.  
Moreover $H^*(M(d,\chi),\bz)$ is generated by the K\"unneth factors $e_i(j)$ of all the Chern classes of $\E$ (see \cite[Proposition]{Bea} or \cite[Proposition 12]{mark}).

In \cite{PS}, W. Pi and J. Shen studied the relation between those generators $e_i(j)$ and they proved that a minimal set of generators can be chosen as 
\begin{equation}\label{gent}\Sigma:=\{c_0(2),c_2(0),c_k(0),c_{k-1}(1),c_{k-2}(2), k\in {3,\cdots,d-1}\}.\end{equation}
The tautological class $c_k(j)\in A^{k+j-1}(M(d,\chi))$ is defined as the K\"unneth factor of the degree $k+1$ component ch$_{k+1}^{\alpha}(\E)$ of the twisted Chern character ch$^{\alpha}(\E):=\text{ch}(\E)\cdot \text{exp}(\alpha)$.  
In other words,
\begin{equation}\label{defckj}c_k(j):=\int_{H^j}(\text{ch}(\E)\cdot \text{exp}(\alpha))_{k+1}=\pi_{M*}(\pi_S^*H^j\cdot (\text{ch}(\E)\cdot \text{exp}(\alpha))_{k+1}),\end{equation}
where $\pi_M,\pi_S$ are two projections from $M(d,\chi)\times S$ to $M(d,\chi)$ and $S$ respectively.

The element $\alpha$ is defined as follows 
\begin{equation}\label{defal}\alpha:=\left(\frac32-\frac{\chi}d\right)\cdot \pi_S^*H-\frac1d\left(\left(\frac32-\frac{\chi}d\right)\pi_M^*\xi+\pi_M^*\left(\int_{H}\text{ch}_{2}(\E)\right)\right)\in A^1(S\times M(d,\chi)).\end{equation}

Notice that $\E$ is a torsion sheaf supported at a codimension 1 subscheme with class $d\pi_S^*H+\pi_M^*\xi$ on $S\times M(d,\chi)$.  Hence $\text{ch}(\E)_0=0$ and $\text{ch}(\E)_1=d\pi_S^*H+\pi_M^*\xi$.  By definition in (\ref{defckj}) one can see that $c_0(0)=0$, $c_0(1)=d\cdot\mathbf{1}$, $c_1(0)=c_1(1)=0$.  It also easy to compute that $c_0(2)=\xi$ and finally by \cite[Proposition 1.3(c)]{PS} $c_2(0)$ is a relatively ample class with respect to $\pi$.
%The class $\alpha\in A^1(\p^2\times M(d,\chi))$ is uniquely determined by asking $c_1(0)=0\in A^0(M(d,\chi))$ and $c_1(1)=0\in A^1(M(d,\chi))$.  Notice that we always have $c_0(1)=d\in A^0(M(d,\chi))$ for any $\alpha$.  

\begin{thm}[Theorem 0.2 in \cite{PS}, Theorem 1.2 in \cite{Yuan10} ]\label{sgen2}Let $\chi$ be coprime to $d$.  Then the $3d-7$ generators 
$$c_0(2),c_2(0),c_k(0),c_{k-1}(1),c_{k-2}(2), k\in {3,\cdots,d-1}$$ of $A^*(M(d,\chi))\cong H^*(M(d,\chi),\mathbb{Z})$ given in (\ref{gent}) have no relation in $A^i(M(d,\chi)),i\leq d-2$ for $d\geq 3$, have no relation in $A^i(M(d,\chi)),i\leq d-1$ and have 3 linearly independent relations in $A^d(M(d,\chi))$ for $d\geq 5$. 
\end{thm}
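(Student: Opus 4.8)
The plan is to reduce the entire statement to a comparison of graded dimensions. By the results of Beauville and Markman quoted above, the tautological classes generate $A^*(M(d,\chi))$, and $\Sigma$ is a set of such classes, so the natural algebra homomorphism from the polynomial algebra $R:=\bq[\Sigma]$ (graded so that $c_k(j)$ lies in degree $k+j-1$, and genuinely commutative since all generators sit in even-degree algebraic cohomology) onto $A^*(M(d,\chi))$ is surjective. Under this reformulation, ``having no relation in $A^i$'' is exactly the statement that $R_i\ra A^i(M(d,\chi))$ is injective, i.e.\ (being already surjective) that $\dim_\bq R_i=\dim_\bq A^i(M(d,\chi))$; and ``having three linearly independent relations in $A^d$'' becomes $\dim_\bq R_d-\dim_\bq A^d(M(d,\chi))=3$. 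Thus the whole theorem is an equality, respectively a prescribed inequality, of Hilbert-series coefficients, and the content is entirely numerical once generation is granted.

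First I would write down the Hilbert series of $R$. Reading off $\Sigma$, there are two generators in degree $1$ (namely $c_0(2)$ and $c_2(0)$) and, for each $j$ with $2\leq j\leq d-2$, exactly three generators of degree $j$ (namely $c_{j+1}(0)$, $c_j(1)$ and $c_{j-1}(2)$). Hence
\[\mathrm{Hilb}(R)=\frac{1}{(1-t)^2}\prod_{j=2}^{d-2}\frac{1}{(1-t^j)^3}.\]
The coefficients of this series in degrees $\leq d$ are then extracted by a routine computation; the key structural point is that the generator pattern in each \emph{fixed} low degree does not depend on $d$, so these coefficients stabilize as $d$ grows and can be compared uniformly.

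The substance lies in computing $\dim_\bq A^i(M(d,\chi))$ for $i\leq d$. Here I would use the cited theorem of Markman, which gives $H^*(M(d,\chi),\bz)\cong A^*(M(d,\chi))$; in particular $H^{\mathrm{odd}}=0$ and the Chow-group dimensions coincide with the even Betti numbers, so it suffices to know the low-degree Poincar\'e polynomial of $M(d,\chi)$. In the spirit of the present paper I would control this in a stable range through the relation to the Hilbert scheme $S^{[n]}$, whose Betti numbers are given by G\"ottsche's formula; the relevant range of validity is exactly $i\leq d-1$. Matching these dimensions against the coefficients of $\mathrm{Hilb}(R)$ then yields the no-relation statement for $i\leq d-2$ (for all $d\geq 3$) and for $i\leq d-1$ (once $d\geq 5$, where the comparison range is nonempty and the full three-per-degree generator pattern is in force; the small cases $d=3,4$ are genuinely exceptional and must be excluded here).

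The final and hardest point is the exact discrepancy of $3$ at degree $d$. I expect these relations to originate from a single universal identity in $A^*(S\times M(d,\chi))$, reflecting that $\E$ is a rank-$0$ sheaf supported on a curve in $|dH|$, so that its twisted Chern character is forced to satisfy one tautological relation; decomposing this along $H^*(\p^2)=\langle 1,H,H^2\rangle$ produces three a priori independent relations among degree-$d$ monomials, each first surfacing in codimension $d$. The delicate part is to verify through the Hilbert-series identity that $\dim_\bq R_d-\dim_\bq A^d$ equals \emph{exactly} $3$ and not more, i.e.\ that these three exhaust the relations in that degree and are linearly independent. This forces precise low-degree Betti numbers and careful combinatorial bookkeeping of the two series simultaneously, and I expect this degree-$d$ transition---the first failure of freeness and the appearance of precisely three relations---to be the main obstacle.
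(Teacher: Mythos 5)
First, a remark on the target: the paper does not prove this statement at all --- it is imported verbatim from Pi--Shen \cite{PS} and from \cite{Yuan10} --- so there is no internal proof to measure your attempt against. Your reduction is nonetheless the right framework and is consistent with how those references argue: granting Beauville--Markman generation and $H^{\mathrm{odd}}=0$, the surjection $\bq[\Sigma]\twoheadrightarrow A^*(M(d,\chi))$ turns ``no relations in $A^i$'' into $\dim_\bq R_i=\dim_\bq A^i(M(d,\chi))$ and ``exactly three independent relations in $A^d$'' into a defect of exactly $3$, so the theorem is indeed equivalent to comparing the coefficients of $\frac{1}{(1-t)^2}\prod_{j=2}^{d-2}(1-t^j)^{-3}$ with the even Betti numbers of $M(d,\chi)$ through degree $2d$. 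Your count of the generators and their degrees is also correct.

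The gap is that you never supply the other side of that comparison, and that is where essentially all of the content of the theorem lives. G\"ottsche's formula computes the Betti numbers of $S^{[n]}$, not of $M(d,\chi)$; to transport it you need the full chain $S^{[n]}\rightsquigarrow \mc^n_{\ls}\rightsquigarrow M(d,\chi)$ (the relative Hilbert scheme is a projective bundle only over the integral locus, so one must control the non-integral curves and the codimension in which they interfere), and then an honest extraction of $b_{2i}(M(d,\chi))$ for all $i\le d$. In particular $i=d-1$ and $i=d$ lie \emph{outside} the free part $H^{*\le 2d-4}$, so the stabilization heuristic you invoke does not automatically reach them --- these are precisely the degrees where \cite{Yuan10} must go beyond \cite{PS}, which is why the theorem attributes different ranges to the two sources. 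Likewise, the degree-$d$ claim needs both an exhibited source of three relations (your guessed universal identity in $A^*(S\times M(d,\chi))$ is plausible but unverified: one must check that its three K\"unneth components are nonzero and independent modulo decomposables) and the matching Betti-number count showing the defect is exactly $3$ and not more. As written, the proposal is a correct and standard reduction followed by an unproven assertion of the numerical input, so it does not yet constitute a proof.
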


\subsection{The P=C conjecture} In \cite{KPS}, Kononov-Pi-Shen call the subspace $$H^{*\leq 2d-4}(M(d,\chi),\bq)\subset H^*(M(d,\chi),\bq)$$ the \emph{free part} of the cohomology and they define the \emph{Chern filtration} of the free part 
\[C_0H^{*\leq 2d-4}(M(d,\chi),\bq)\subset C_1H^{*\leq 2d-4}(M(d,\chi),\bq)\subset\cdots\subset H^{*\leq 2d-4}(M(d,\chi),\bq),\]
where the $k$-th piece $C_kH^{*\leq 2d-4}$ is defined as the span of all monomials
$$\prod_{i=1}^sc_{k_i}(j_i)\in H^{*\leq 2d-4}(M(d,\chi),\bq),\text{ with } c_{k_i}(j_i)\in\Sigma\text{ and }\sum_{i=1}^s k_i\leq k.\footnote{We modify the definition a bit in order to exclude $c_1(0),c_1(1)$}$$
\begin{rem}The Chern filtration makes sense on the free part since every element in the free part can be uniquely expressed as a polynomial in generators in $\Sigma$.
\end{rem}
Kononov-Pi-Shen pose the following conjecture which is also called ``P=C" conjecture.
\begin{con}[Conjecture 0.3 in \cite{KPS}] For $d\geq 3$, we have
\[P_kH^{*\leq 2d-4}(M(d,\chi),\bq)=C_kH^{*\leq 2d-4}(M(d,\chi),\bq).\]
\end{con}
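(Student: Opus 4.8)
The statement asserts the equality of two filtrations on the free part $H^{*\le 2d-4}(M(d,\chi),\bq)$, and my plan is to prove the two inclusions separately, reducing each to concrete properties of tautological classes that can ultimately be verified on the Hilbert scheme $S^{[n]}$. The decisive structural input is Theorem \ref{sgen2}: in the free range there are no relations among the generators $\Sigma$, so $H^{*\le 2d-4}$ is (a truncation of) a free polynomial algebra on $\Sigma$. This makes the Chern side purely combinatorial: $\dim\mathrm{gr}_k^C H^m$ is the number of monomials $\prod_i c_{k_i}(j_i)$ with $c_{k_i}(j_i)\in\Sigma$, $2\sum_i(k_i+j_i-1)=m$, and $\sum_i k_i=k$. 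Since $C_k=\bigoplus_{l\le k}\mathrm{gr}_l^C$, once $C_k\subseteq P_k$ is established the whole conjecture becomes equivalent to the numerical identity $\dim\mathrm{gr}_k^P H^m=\dim\mathrm{gr}_k^C H^m$ for all $k,m$ in the free range, because an inclusion of subspaces of equal dimension is an equality.

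For the inclusion $C_k\subseteq P_k$ I would first assign to each generator its expected perversity, i.e.\ show that $c_k(j)$ lies in $P_k$, and then prove that the perverse filtration is multiplicative on the free part, so that a monomial $\prod_i c_{k_i}(j_i)$ lands in $P_{\sum_i k_i}$. To bound the perversity of $c_k(j)$ I would use the de Cataldo--Migliorini description in Proposition \ref{desp}: membership in $P_k$ is governed by the vanishing of suitable powers of $\xi$ applied to $c_k(j)$. Writing both $\xi$ and the tautological generators in terms of the universal sheaf $\E$ and transporting the resulting classes to $S^{[n]}$ via the correspondence set up in \S3.1--\S3.2, each required vanishing $\xi^N\cdot c_k(j)=0$ becomes an intersection identity among tautological classes on $S^{[n]}$, which I would verify with the incidence-variety recursions of \S3.3--\S3.4 by induction on $n$. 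Multiplicativity on the free part can be attacked in the same spirit, by translating $P_{k_1}\cdot P_{k_2}\subseteq P_{k_1+k_2}$ into a compatibility of the $\xi$-Lefschetz action with the cup product and again reducing it to identities on $S^{[n]}$.

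For the numerical identity, and hence for $P_k\subseteq C_k$, the plan is to compute the perverse Poincar\'e polynomial of the free part through the same $S^{[n]}$ correspondence. In the stable range the free part of $H^*(M(d,\chi),\bq)$ together with its perverse filtration should be independent of the large parameter $d$ and captured degree by degree by $H^*(S^{[n]},\bq)$, so that each $\dim\mathrm{gr}_k^P H^m$ is expressible as an intersection-theoretic quantity on $S^{[n]}$. Running the incidence recursion and inducting on $n$ should then reproduce exactly the combinatorial count $\dim\mathrm{gr}_k^C H^m$ coming from the free polynomial algebra on $\Sigma$. Combined with $C_k\subseteq P_k$ from the previous step, this forces $C_k=P_k$ throughout the free range.

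The principal obstacle is the $P_k\subseteq C_k$ direction in full generality: carrying the perverse-number computation across the entire free range $*\le 2d-4$ uniformly in $d$ requires controlling the incidence recursion for all $n$ simultaneously, and the combinatorial bookkeeping grows rapidly with $n$ (this is the ``huge amount of computations'' that in the present paper is only completed for $*\le 4$, i.e.\ for $n$ small). A secondary difficulty is establishing the \emph{exact} perversity of every generator rather than merely an upper bound, together with full multiplicativity on the free part; both feed into the dimension match, and both demand that the recursive identities on $S^{[n]}$ close up cleanly at every step of the induction rather than only in low degree.
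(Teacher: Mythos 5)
The first thing to observe is that the statement you are addressing is an \emph{open conjecture}: the paper does not prove it, and only establishes the special case $P_kH^{*\leq 4}(M(d,\chi),\bq)=C_kH^{*\leq 4}(M(d,\chi),\bq)$ for $d\geq 4$ (Theorem \ref{mainintro}, Corollary \ref{mainco}). Your text is accordingly a research program rather than a proof, and each of its three pillars conceals a genuine gap. First, the inclusion $C_k\subseteq P_k$ in your plan rests on two unproven inputs: that every generator $c_k(j)$ lies in $P_k$, and that the perverse filtration is multiplicative on the free part. Multiplicativity for this non-Lagrangian Hilbert--Chow map is not a routine consequence of Proposition \ref{desp}; it is an open problem of essentially the same depth as the conjecture itself, and saying it ``can be attacked in the same spirit'' does not produce $P_{k_1}\cdot P_{k_2}\subseteq P_{k_1+k_2}$. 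Second, the dimension identity $\dim\mathrm{gr}^P_kH^m=\dim\mathrm{gr}^C_kH^m$ is precisely the quantitative content of the conjecture (it amounts to computing all the perverse numbers of $M(d,\chi)$, i.e.\ the refined BPS invariants of local $\p^2$); asserting that the incidence recursion ``should reproduce'' the monomial count is a restatement of the problem, not an argument. Third, the transfer to $S^{[n]}$ is not available uniformly over the free range: Proposition \ref{fmtoh} detects membership in $P_k$ only under the constraint $m-k\leq d-1$ (and exact perversity only for $m-k\leq d-2$), because the correspondence with $\mc^n_{\ls}$ controls only the locus of integral curves, whose complement has codimension $d-1$ in $\ls$. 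For low-perversity classes in cohomological degree close to $2d-4$ this constraint fails, so the proposed reduction of ``each required vanishing'' to $S^{[n]}$ does not even apply there.

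Where your plan overlaps with what the paper actually accomplishes --- pinning down the perversity of individual generators by transporting them to $S^{[n]}\times\ls$ and running the incidence-variety recursion by induction on $n$ --- it is the right idea, and it is exactly how Theorem \ref{main3} shows that $\gamma_1(2)$, $\gamma_2(1)$, $\gamma_3(0)$ are almost of perversity $1,2,3$, hence that $c_1(2)$, $c_2(1)$, $c_3(0)$ have perversity $1,2,3$. But already these three generators occupy all of Section 4 plus two appendices of delicate computation, and the paper still needs to import the degree-$\leq 4$ input from \cite[\S 2.4]{KPS} to conclude Corollary \ref{mainco}. Nothing in your proposal supplies a new mechanism that would control all generators, all products, and all perverse numbers simultaneously; as written it establishes neither inclusion beyond the range the paper itself treats.
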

\begin{rem}\label{c0c2}Since $c_0(2)=\xi$ and $c_2(0)$ is relative ample, we automatically have $c_0(2)^n$ has perversity 0 for $n<\ttb$ and $c_2(0)^n$ has perversity $2n$ for $n<g_L=\frac{(d-1)(d-2)}2$. 
\end{rem}
The choice of $\alpha$ in (\ref{defal}) is a bit complicated.  The following two lemmas allow us to replace it by a simpler element.
\begin{lemma}\label{newal1}Let $\alpha'\in A^1(S\times M(d,\chi))$ such that $\alpha'-\alpha=a\pi_S^*H+b\pi_M^* \xi$ for some $a,b\in\bq$.  Define
\[c'_k(j):=\int_{H^j}(\emph{ch}(\E)\cdot \emph{exp}(\alpha'))_{k+1}\]
and 
\[\Sigma':=\{c'_0(2),c'_2(0),c'_k(0),c'_{k-1}(1),c'_{k-2}(2), k\in {3,\cdots,d-1}\}.\]
Then $\Sigma'$ also provides a minimal set of generators of $A^*(M(d,\chi))$, Theorem \ref{sgen2} also holds for generators in $\Sigma'$.\end{lemma}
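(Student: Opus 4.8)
The plan is to show that replacing $\alpha$ by $\alpha'$ amounts to a unitriangular polynomial change of variables among the tautological classes, and then to transport both minimality and Theorem~\ref{sgen2} across the resulting automorphism. First I would record the elementary computation. Since $\alpha'-\alpha=a\pi_S^*H+b\pi_M^*\xi$ and the two summands commute,
\[ \exp(\alpha')=\exp(\alpha)\cdot\exp(a\pi_S^*H)\cdot\exp(b\pi_M^*\xi). \]
On $S=\p^2$ we have $H^3=0$, so $\exp(a\pi_S^*H)=1+a\pi_S^*H+\tfrac{a^2}{2}\pi_S^*H^2$ truncates, whereas $\exp(b\pi_M^*\xi)=\sum_{n\ge0}\tfrac{b^n}{n!}\pi_M^*\xi^n$ touches only the $M(d,\chi)$-factor. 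Multiplying $\text{ch}(\E)\exp(\alpha)$ by these factors and reading off the K\"unneth components, using that $\int_{H^j}(-)$ extracts the $H^{2-j}$ part, I expect an identity of the form
\[ c'_k(j)=c_k(j)+\sum (\text{scalar})\,\xi^{\,n}\,c_{k'}(j') \]
for all $k\ge0$ and $j\in\{0,1,2\}$, where every summand is a power of $\xi=c_0(2)$ times a single tautological class $c_{k'}(j')$ of strictly smaller index $k'<k$ but of the same cohomological degree $k+j-1$.

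The delicate point --- and the step I expect to cost the most care --- is the bookkeeping showing that this is genuinely a unitriangular substitution inside the subalgebra generated by $\Sigma$. I would check the three cases $j=2,1,0$ against the explicit index ranges occurring in $\Sigma$, namely $c_m(0)$ for $2\le m\le d-1$, $c_m(1)$ for $2\le m\le d-2$, and $c_m(2)$ for $0\le m\le d-3$, and verify that whenever $c'_k(j)$ is one of the generators in $\Sigma'$, every class $c_{k'}(j')$ appearing in its correction term already lies in $\Sigma$ (or is zero, as for $c_1(0)=c_1(1)=0$, or is the constant $c_0(1)=d$, which contributes only a scalar multiple of $\xi^{\,k}$). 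The shifted indices $k-n$, $k-1-n$, $k-2-n$ must be seen never to leave the allowed ranges; this is elementary but is where all the case-analysis sits.

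Granting this, I would finish abstractly. Let $R=\bq[x_k(j)]$ be the polynomial ring on indeterminates indexed exactly as $\Sigma$, with cohomological grading $\deg x_k(j)=k+j-1$ and a weight filtration given by $w(x_k(j))=k$. The substitutions $x_k(j)\mapsto c_k(j)$ and $x_k(j)\mapsto c'_k(j)$ define ring homomorphisms $\mathrm{ev},\mathrm{ev}'\colon R\to A^*(M(d,\chi))$. The correction identity defines a graded endomorphism $\Phi$ of $R$ with $\Phi(x_k(j))=x_k(j)+(\text{strictly lower weight})$ and $\mathrm{ev}'=\mathrm{ev}\circ\Phi$; being unitriangular for the weight filtration, $\Phi$ is an automorphism, and since the corrections preserve the cohomological degree, $\Phi$ restricts to a linear automorphism of each finite-dimensional graded piece $R^i$.

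From this the conclusions follow at once. As $\mathrm{ev}'=\mathrm{ev}\circ\Phi$ with $\Phi$ an automorphism, $\mathrm{ev}'$ is surjective, so $\Sigma'$ generates $A^*(M(d,\chi))$; since $\Sigma'$ has the same cardinality $3d-7$ as the minimal set $\Sigma$, it is again minimal. Finally $\ker(\mathrm{ev}')\cap R^i=\Phi^{-1}\!\left(\ker(\mathrm{ev})\cap R^i\right)$, and as $\Phi$ is a linear isomorphism in each degree these kernels have equal dimension for every $i$; hence the relation pattern is identical, and Theorem~\ref{sgen2} holds verbatim for $\Sigma'$ --- no relations in degrees $\le d-2$ (resp.\ $\le d-1$) and exactly three independent relations in degree $d$ for $d\ge5$.
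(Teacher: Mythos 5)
Your proposal is correct and follows essentially the same route as the paper: expand $\exp(\alpha')=\exp(\alpha)\cdot\exp(a\pi_S^*H+b\pi_M^*\xi)$, use $\mathrm{ch}(\E)_0=0$ and $\mathrm{ch}(\E)_1=d\pi_S^*H+\pi_M^*\xi$ to write each $c'_k(j)$ as $c_k(j)$ plus $\xi$-multiples of strictly lower-index classes, and conclude. The paper compresses the final step into ``the lemma follows by a direct observation,'' while you make that observation explicit as a unitriangular, degree-preserving change of variables; this is a harmless elaboration of the same argument, not a different approach.
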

\begin{proof}We have
\[\text{ch}(\E)\cdot \text{exp}(\alpha')=\text{ch}(\E)\cdot \text{exp}(\alpha)\cdot\text{exp}(a\pi_S^*H+b\pi_M^*\xi).\]
Hence
\[(\text{ch}(\E)\cdot \text{exp}(\alpha'))_{k+1}=\sum_{i=0}^{k+1}\frac{1}{i!}(a\pi_S^*H+b\pi_M^*\xi)^i (\text{ch}(\E)\cdot \text{exp}(\alpha))_{k+1-i}.\]
Notice that $\text{ch}(\E)_0=0$ and $\text{ch}(\E)_1=d\pi_S^*H+\pi_M^*\xi$. By a direct computation we have 
\begin{eqnarray}\label{comal}c'_k(2)&=&c_k(2)+\sum_{i=1}^{k} \frac{b^i}{i!}\xi^ic_{k-i}(2),\nonumber\\
c'_k(1)&=&c_k(1)+\sum_{i=1}^{k} \frac{b^i}{i!}\xi^ic_{k-i}(1)+\sum_{i=1}^{k}\frac{ab^{i-1}}{(i-1)!}\xi^{i-1}c_{k-i}(2),\nonumber\\
c'_k(0)&=&c_k(0)+\sum_{i=1}^{k} \frac{b^i}{i!}\xi^ic_{k-i}(0)+\sum_{i=1}^{k}\frac{ab^{i-1}}{(i-1)!}\xi^{i-1}c_{k-i}(1)\nonumber\\
&&+\sum_{i=2}^{k}\frac{a^2b^{i-2}}{2(i-2)!}\xi^{i-2}c_{k-i}(2). \end{eqnarray}
In particular we have $c'_0(j)=c_0(j)$ for $j=0,1,2$, $c'_1(0)=c_1(0)+ac_0(1)=ad\mathbf{1}$, and $c'_1(1)=c_1(1)+b\xi c_0(1)+ac_0(2)=(bd+a)\xi=(bd+a)c'_0(2)$.  The lemma follows by a direct observation.
\end{proof}
Since Theorem \ref{sgen2} applies to $\Sigma'$, we can define another Chern filtration $C'_{\bullet}$ on the free part $H^{*\leq 2d-4}(M(d,\chi),\bq)$ analogously with $\alpha$ replaced by $\alpha'$.  
\begin{lemma}\label{newal2}Let $\alpha$ and $\alpha'$ be as in Lemma \ref{newal1}.  Then for any $0\leq t\leq 2d-4$ the following two statements are equivalent
\begin{itemize}
\item[(1)]$P_kH^{*\leq t}(M(d,\chi),\bq)=C_kH^{*\leq t}(M(d,\chi),\bq),$
\item[(2)]$P_kH^{*\leq t}(M(d,\chi),\bq)=C'_kH^{*\leq t}(M(d,\chi),\bq).$
\end{itemize}
%where $C'_{\bullet}$ is the Chern filtration with $\alpha$ replaced by $\alpha'$. 
\end{lemma}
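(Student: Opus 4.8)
The plan is to reduce the equivalence to the single statement that the two Chern filtrations agree on the free part in each degree, namely
\[C_kH^{*\leq t}(M(d,\chi),\bq)=C'_kH^{*\leq t}(M(d,\chi),\bq)\quad\text{for all }k\text{ and all }0\leq t\leq 2d-4.\]
This suffices: the perverse filtration $P_\bullet$ is defined purely geometrically through the Hilbert--Chow morphism $\pi$ in \eqref{hcms} and is entirely independent of the auxiliary class $\alpha$, so (1) and (2) share the same left-hand side; once their right-hand sides coincide the two statements are literally the same, hence equivalent.

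To establish $C_k=C'_k$ I would exploit the triangular shape of the transformation \eqref{comal} from Lemma \ref{newal1}. The decisive point is that $\xi=c_0(2)=c'_0(2)$ carries Chern weight $0$, so multiplication by any power of $\xi$ leaves the Chern weight of a monomial unchanged. Reading off \eqref{comal} one sees, for $j=0,1,2$,
\[c'_m(j)=c_m(j)+\big(\bq\text{-combination of }\Sigma\text{-monomials of Chern weight }<m\big).\]
Here one must verify that every lower class $c_{m-i}(j')$ occurring is again a member of $\Sigma$, or else one of the trivial classes $c_0(0)=c_1(0)=c_1(1)=0$ and $c_0(1)=d\mathbf{1}$; this is a routine check against the index ranges recorded in \eqref{gent}, using in particular that the $a^2$-terms in the formula for $c'_m(0)$ begin at $i=2$. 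Since every summand has the same cohomological degree as $c_m(j)$, the whole identity lives inside the free part, and it shows $c'_m(j)\in C_m$.

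From here the inclusion $C'_kH^{*\leq t}\subseteq C_kH^{*\leq t}$ follows by expansion: given a monomial $\prod_i c'_{m_i}(j_i)$ of total weight $\sum_i m_i\leq k$ and cohomological degree $\leq t\leq 2d-4$, replacing each factor by the right-hand side above and multiplying out yields a $\bq$-combination of $\Sigma$-monomials, each of weight $\leq\sum_i m_i\leq k$ (weights add under products) and of the same, unchanged cohomological degree $\leq t$, hence still in the free part. Running the identical argument with $\alpha$ and $\alpha'$ interchanged (that is, replacing $(a,b)$ by $(-a,-b)$) gives the reverse inclusion $C_kH^{*\leq t}\subseteq C'_kH^{*\leq t}$, so the two filtrations coincide and the lemma follows.

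The argument is essentially formal; the only place demanding care is the bookkeeping in the middle step, where one must confirm that the classes produced by \eqref{comal} never escape $\Sigma$ (after discarding the vanishing and constant classes) and that the truncation to $H^{*\leq t}$ is preserved throughout, so that one never leaves the free part on which the Chern filtration is defined.
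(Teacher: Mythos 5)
Your argument is correct, and it is genuinely more direct than what the paper indicates. The paper disposes of this lemma in one line, saying it follows ``by (\ref{comal}) and Proposition \ref{desp}\ldots by induction on $t$,'' i.e.\ the intended route interleaves the two Chern filtrations with the perverse filtration via the de Cataldo--Migliorini description of $P_k$ in terms of powers of $\xi$, and climbs up the cohomological degree. You instead prove the stronger, unconditional statement that the two filtrations on the right-hand sides literally coincide, $C_kH^{*\leq t}=C'_kH^{*\leq t}$, after which the equivalence is a tautology. Your key observations are sound: each correction term in (\ref{comal}) has the form $\xi^{e}c_{m-i}(j')$ with $i\geq 1$, which has the same cohomological degree as $c_m(j)$ (so one never leaves $H^{*\leq t}$, hence stays in the free part where unique polynomial expression in $\Sigma$ holds by Theorem \ref{sgen2}) and strictly smaller Chern weight $m-i$ since $\xi=c_0(2)$ has weight $0$; the bookkeeping that every $c_{m-i}(j')$ appearing is either in $\Sigma$, zero, or a constant does check out against the index ranges of (\ref{gent}); and the reverse inclusion follows by the symmetry $(a,b)\mapsto(-a,-b)$, using that Lemma \ref{newal1} puts $\Sigma$ and $\Sigma'$ on equal footing. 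What your approach buys is independence from any property of the perverse filtration (Proposition \ref{desp} is not needed at all) and no induction; what the paper's sketched route would buy is nothing extra here, so your version is preferable as a self-contained proof. The one point worth making explicit in a final write-up is the degree bookkeeping you already flag: $c_m(j)\in A^{m+j-1}$ and each correction $\xi^{e}c_{m-i}(j')$ lands in the same $A^{m+j-1}$, which is what guarantees that expanding a product of $c'$-generators of total degree $\leq t$ produces only $\Sigma$-monomials of total degree $\leq t$.
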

\begin{proof}By (\ref{comal}) and Proposition \ref{desp}, one can show the lemma by induction on $t$.  The argument is standard and we left it to the readers.
\end{proof}

From now on we redefine $\alpha$ as follows 
\begin{equation}\label{defal2}\alpha:=-\frac1d\left(\pi_M^*\left(\int_{H}\text{ch}_{2}(\E)\right)\right)\in A^1(S\times M(d,\chi)).\end{equation}
Define  
\begin{equation}\label{defc2}\text{ch}^{\alpha}(\E):=\text{ch}(\E)\cdot \text{exp}(\alpha)  \text{ and }c_k(j):=\int_{H^j}(\text{ch}^{\alpha}(\E))_{k+1}.\end{equation}
We also have the set 
\begin{equation}\label{gent2}\Sigma:=\{c_0(2),c_2(0),c_k(0),c_{k-1}(1),c_{k-2}(2), k\in {3,\cdots,d-1}\},\end{equation}
is a minimal set of generators, and $c_1(0)=(\chi-\frac{3d}2)\mathbf{1},c_1(1)=0$.  
On the ``P=C" conjecture we have the following result.
\begin{thm}\label{main1}For any $d\geq 5$ and $\chi$ co-prime to $d$, we have
\begin{itemize}\item[(1)]$c_1(2)$ has perversity 1;
\item[(2)]$c_2(1)$ has perversity 2;
\item[(3)]$c_3(0)$ has perversity 3.
\end{itemize}
\end{thm}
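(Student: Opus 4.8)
The three classes $c_1(2), c_2(1), c_3(0)$ all lie in $A^2(M(d,\chi)) = H^4(M(d,\chi),\bq)$, and by Theorem \ref{sgen2} (applicable since $2\le d-2$ for $d\ge 5$) this space is freely spanned by the six monomials $\xi^2,\ \xi c_2(0),\ c_2(0)^2,\ c_1(2),\ c_2(1),\ c_3(0)$. By Remark \ref{c0c2}, together with the last part of Proposition \ref{desp} (cup product with $\xi$ preserves perversity), the first three already have perversities $0, 2, 4$. So the plan is to pin down the perversities of the remaining three by bounding them from above and below, with targets $k=1,2,3$. Throughout I would first replace $\alpha$ by the simplified class \eqref{defal2} using Lemmas \ref{newal1} and \ref{newal2}, so that $c_1(1)=0$ and the bookkeeping is cleaner.

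For the lower bounds I would route through the graded dimensions $\dim\text{Gr}^P_k H^4$. If these equal $(1,1,2,1,1)$ for $k=0,\dots,4$ — exactly what ``P=C" predicts and what I would extract from the comparison with $S^{[n]}$ — then once the three upper bounds are in hand the lower bounds come for free: the inclusions $\langle\xi^2\rangle\subseteq P_0H^4$, $\langle\xi^2,c_1(2)\rangle\subseteq P_1H^4$, and $\langle\xi^2,\xi c_2(0),c_1(2),c_2(1)\rangle\subseteq P_2H^4$ become equalities by the dimension count, and then the linear independence of the six monomials (Theorem \ref{sgen2}) immediately yields $c_1(2)\notin P_0$, $c_2(1)\notin P_1$, $c_3(0)\notin P_2$, i.e. the perversities are \emph{exactly} $1,2,3$.

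For the upper bounds I would use Proposition \ref{desp}. In degree $4$ only $i=1,2,3$ contribute, with images $H^4$, $\xi H^2$, and $\langle\xi^2\rangle$ respectively, so ``perversity $\le k$" means writing $c_k(j)$ as a sum $\sum_i \xi^{i-1}\beta_i$ with each term annihilated by $\xi^{\ttb+k+i-4}$. I would stress that the bare $i=1$ condition $\xi^{\ttb+k-3}c_k(j)=0$ is \emph{not} forced by the decomposition theorem alone (for a perversity-$k$ class with $k\le g_L$ that power of $\xi$ need not annihilate it), so the image terms $i=2,3$ must genuinely be used. Rather than verify such a decomposition by hand, I would obtain the memberships $c_k(j)\in P_kH^4$ directly from the $S^{[n]}$ comparison, where the incidence filtration maps into the perverse filtration and places each generator in the expected piece; the relatively ample class $c_2(0)$ (perversity $2$) and $\xi$ (perversity $0$) then control everything via relative hard Lefschetz.

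The heart of the argument, and the main obstacle, is the transfer to $S^{[n]}$ together with the nonvanishing computations. Following \S 3.1--3.2 I would encode each $c_k(j)$ as an explicit tautological class on $S^{[n]}$ and re-express the perverse filtration through the incidence varieties of \S 3.3, so that the statements ``perversity $=k$" become concrete intersection numbers. These are governed by the recursion formulas of \S 3.4, which I would analyze by induction on $n$. I expect the delicate and computation-heavy step to be keeping track of the $H^j$-weights of the Künneth factors through the recursion while certifying that the resulting classes stay linearly independent from the already-understood $\xi$ and $c_2(0)$ (equivalently, that the governing intersection numbers do not vanish) — precisely the ``huge amount of computations'' the abstract anticipates.
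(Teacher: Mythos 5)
Your overall architecture matches the paper's: reduce to a large $\chi'\equiv\chi\ (d)$, pass to the relative Hilbert scheme $\mc^n_{\ls}$ via the projective bundle $f$, encode $c_k(j)$ as tautological classes $\gamma_k(j)$ on $S^{[n]}\times\ls$, and run the incidence-variety recursion by induction on $n$. But there is a genuine gap in how you close the argument, on both sides of the bound. For the lower bounds you lean on the graded dimensions $\dim\mathrm{Gr}^P_kH^4=(1,1,2,1,1)$, which you say you would ``extract from the comparison with $S^{[n]}$.'' Nothing in the transfer machinery produces these numbers: Proposition \ref{fmtoh} translates \emph{membership} statements $c\in P_kH^m$ into statements about $\gamma(c)\cdot P^{[n]}(\xi)$ modulo kernels of powers of $\xi$; it does not compute the decomposition-theorem data of $\pi$ or the dimensions of the graded pieces, and assuming those dimensions is essentially assuming the conclusion in degree $4$. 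The paper instead proves non-membership directly: e.g.\ for $\gamma_1(2)$ it shows that no $\gamma\in\Gamma^2$ can absorb the $\xi^3$-term, using the nonvanishing $\gamma_2(0)P_n^{[n]}\neq0$ (Lemma \ref{inp13}, $\int_{S^{[n]}}\theta_2^1(n)^nP_n^{[n]}=(-1)^nd^n$) and a descent $a_{n+1}=0\Rightarrow a_n=0$ terminating at $a_1=-d^{-2}\neq0$; for $\gamma_2(1)$ and $\gamma_3(0)$ the exclusions $\notin\widetilde P_1$, $\notin\widetilde P_2$ are likewise concrete nonvanishing statements. If you want to keep the dimension-count route you must supply the perverse numbers from an independent source, which you have not done.

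The upper bounds have the same problem in milder form: ``the incidence filtration maps into the perverse filtration and places each generator in the expected piece\dots via relative hard Lefschetz'' is not an argument. Relative hard Lefschetz and the relative ampleness of $c_2(0)$ control $\xi^a c_2(0)^b$, but they say nothing about where $c_1(2)$, $c_2(1)$, $c_3(0)$ sit. The actual content is the family of identities $\theta_i^2(n)P_n^{[n]}=0$, $\theta_i^2(n)P_{n-1}^{[n]}-\tfrac1d\theta_i^1(n)P_n^{[n]}=0$, the quadratic relations of Lemmas \ref{pnt31}--\ref{pnt41}, etc., proved by induction through $\p(\mi_n)$ using $\Omega\cap\Omega^{\bot}=0$; these are what show $\gamma_1(2)P^{[n]}(\xi)$, $\gamma_2(1)P^{[n]}(\xi)$, $\gamma_3(0)P^{[n]}(\xi)$ land in the required kernels after correction by lower-perversity terms. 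You correctly identify this as the computational heart and correctly observe that the $i\geq2$ image terms in Proposition \ref{desp} must be used, but as written the proposal defers precisely the steps that constitute the proof.
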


Theorem \ref{main1} together with Remark \ref{c0c2} and the result in \cite[\S2.4]{KPS} implies the following corollary.
\begin{coro}\label{mainco}For any $d\geq 4$ and $\chi$ co-prime to $d$, we have
\[P_kH^{*\leq 4}(M(d,\chi),\bq)=C_kH^{*\leq 4}(M(d,\chi),\bq).\] 
\end{coro}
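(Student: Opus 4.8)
The plan is to establish Corollary \ref{mainco} by combining Theorem \ref{main1} with the two baseline perversity facts in Remark \ref{c0c2}. Recall that the free part $H^{*\leq 2d-4}(M(d,\chi),\bq)$ for $d\geq 4$ contains $H^{*\leq 4}$, and since $d\geq 4$ gives $g_L=\frac{(d-1)(d-2)}{2}\geq 3$, the generators $c_0(2)$ and $c_2(0)$ each have unobstructed perversity in the range we care about: $c_0(2)=\xi$ has perversity $0$ and $c_2(0)$ has perversity $2$. First I would enumerate, degree by degree, a monomial basis of $C_kH^{*\leq 4}$ in the generators from $\Sigma$ that actually land in cohomological degrees $\leq 4$: in $H^0$ only the unit; in $H^2$ the classes $c_0(2)$ and $c_2(0)$; in $H^4$ the products and the new generators $c_1(2)$, $c_2(1)$, $c_3(0)$ appearing in $\Sigma$ (for $d\geq 5$). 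Since the ``P=C'' statement is an equality of filtrations, my task reduces to showing that on each generator the Chern degree matches the perversity, and that this propagates multiplicatively through the finitely many low-degree monomials.

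Second I would handle the perversity of the individual generators. For $c_0(2)$ and $c_2(0)$ this is exactly Remark \ref{c0c2}. For $c_1(2)$, $c_2(1)$, $c_3(0)$ — which carry Chern weights $1,2,3$ respectively — this is precisely the content of Theorem \ref{main1}, valid for $d\geq 5$. The remaining case $d=4$ is not covered by Theorem \ref{main1} (which requires $d\geq 5$), so here I would invoke the computation in \cite[\S2.4]{KPS}: for $d=4$ the free part $H^{*\leq 4}=H^{*\leq 2d-4}$ is small enough that Kononov–Pi–Shen have already verified ``P=C'' directly, which patches the boundary case.

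Third, and this is the real verification step, I would check multiplicativity in the relevant range. The key input is the second assertion of Proposition \ref{desp}: multiplying a perversity-$k$ class by $\xi^n=c_0(2)^n$ either kills it or preserves perversity $k$, so products against the perversity-$0$ class $\xi$ cost nothing in perversity, matching the Chern filtration where $c_0(2)$ has weight $0$. For products like $c_2(0)\cdot c_0(2)$ in $H^4$, the target Chern weight is $2$ and I must confirm the perversity is exactly $2$; since $c_2(0)$ has perversity $2$ and cupping with $\xi$ preserves perversity, the upper bound $P_2$ is immediate, and the lower bound (that it is not in $P_1$) follows because $\xi$ acts injectively enough in this low degree by the hard-Lefschetz-type structure encoded in Proposition \ref{desp}. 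Assembling the equalities $P_kH^m=C_kH^m$ for $m=0,2,4$ then yields the corollary.

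The main obstacle will be the $H^4$ layer: one must verify that the Chern-filtration weights assigned to the degree-$4$ monomials coincide with their perversities without any unexpected cancellation or drop in perversity, i.e. that the natural inequality $P_k\supseteq C_k$ is in fact an equality. The containment $C_k\subseteq P_k$ is the ``easy'' direction (perversities of generators add up under cup product by the general sub-multiplicativity coming from Proposition \ref{desp}), so the delicate point is the reverse inclusion $P_k\subseteq C_k$, which requires knowing that the generators of a given perversity exhaust the corresponding graded piece. For this I would lean on Theorem \ref{sgen2}, which guarantees no relations among the generators in the low-degree range $A^i$ with $i\leq d-2$; for $d\geq 4$ this freeness covers $H^{*\leq 4}$ and ensures the Chern filtration is genuinely well-defined with the expected dimensions, so a dimension count matching $\dim\operatorname{gr}^P_k$ against $\dim\operatorname{gr}^C_k$ closes the argument.
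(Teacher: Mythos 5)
Your overall strategy --- exact perversities of the generators from Theorem \ref{main1} and Remark \ref{c0c2}, stability under cup product with $\xi$ from Proposition \ref{desp}, and the input from \cite[\S 2.4]{KPS} --- is exactly the set of ingredients the paper invokes (its entire ``proof'' of the corollary is the one sentence preceding it). However, two of your justifications do not hold up as stated. First, you assert that $C_k\subseteq P_k$ is the ``easy'' direction because of a ``general sub-multiplicativity coming from Proposition \ref{desp}.'' No such general sub-multiplicativity is available: multiplicativity of the perverse filtration for the Hilbert--Chow map is precisely the kind of statement that is open in general (it is the hard content of P=W-type theorems), and Proposition \ref{desp} only gives stability of perversity under multiplication by $\xi$. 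What saves the argument in $H^{*\leq 4}$ is that the only composite monomials occurring are $\xi^2$, $\xi\, c_2(0)$ and $c_2(0)^2$, and these are covered by Remark \ref{c0c2} (which gives the exact perversity $2n$ of $c_2(0)^n$) together with the $\xi$-stability; your earlier case-by-case treatment of these products is the correct argument, and the blanket appeal to sub-multiplicativity should be deleted.

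Second, your closing dimension count is missing its key input. Theorem \ref{sgen2} does give $\dim\operatorname{gr}^C_kH^m$ for $m\leq 4$ (freeness of the generators in this range), but you never say where $\dim\operatorname{gr}^P_kH^m$ comes from, and knowing that each basis monomial has the exact expected perversity does not by itself rule out a linear combination of perversity-$(>k)$ monomials dropping into $P_k$. The equality of the two families of dimensions is the numerical identity established in \cite[\S 2.4]{KPS}, which the paper cites for \emph{all} $d\geq 4$, not only for the boundary case $d=4$ as in your proposal; with that identity in hand, the inclusion $C_k\subseteq P_k$ that you do establish upgrades to the claimed equality, and the $d=4$ case is indeed already contained in \cite{KPS} as you say.
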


\begin{rem}\label{chchi}For any $\chi'\equiv \chi ~(d)$, there is a natural isomorphism $M(d,\chi')\cong M(d,\chi)$.  Let $\chi_1=\chi+md$ for any $m\in\bz$, then $M(d,\chi)$ can be viewed as the moduli space $M(d,\chi_1)$ with $\E_1:=\E\otimes q^*H^{\otimes m}$ a universal sheaf.  Since
\[\alpha_1:=-\frac1d\left(\pi_M^*\left(\int_{H}\emph{ch}_{2}(\E_1)\right)\right)=\alpha-\frac md\pi_M^*\xi,\]
we have
\[\emph{ch}^{\alpha_1}(\E_1)=\emph{ch}^{\alpha}(\E)\otimes\emph{exp}(-\frac md\pi_M^*\xi+m\pi_S^*H).\]
Therefore by Lemma \ref{newal1} and Lemma \ref{newal2}, it is enough to show Theorem \ref{main1} for some $M(d,\chi')$ with $\chi'\equiv\chi~(d)$.
\end{rem}

Our strategy to prove Theorem \ref{main1} is to relate $M(d,\chi)$ to the Hilbert scheme $S^{[n]}$ of $n$-points, more precisely, to the relative Hilbert scheme $\mc^n_{\ls}$ of $n$-points over $\ls$.

%%%%%%%%%%%%%%%%%%%%.      Section 3.   %%%%%%%%%%%%%%%%%%%
\section{Some properties on the Hilbert schemes of points.}
\subsection{The relative Hilbert scheme of points over a linear system.} %%%%%%%. Subsection 3.1 The relative Hilbert scheme of points  %%%%%%%%
Let $S^{[n]}$  be the Hilbert scheme of $n$ points on $S$.  Let $\mi_n$ be the ideal sheaf of the universal family $\Sigma_n\subset S^{[n]}\times S$.

We define a group homomorphism as follows
\begin{eqnarray}\label{ghs}K(S)&\ra&K(S^{[n]})\nonumber\\
\alpha&\mapsto&\alpha^{[n]}:=R(p_n)_*(q_n^*\alpha\cdot\mo_{\Sigma_n}),
\end{eqnarray}
where $\mf\cdot\mg:=\sum_{i\geq0}(-1)^i\Tor^i(\mf,\mg)$ and $Rf_*\mf=\sum_{i\geq 0}(-1)^iR^if_*\mf$.  If $\alpha$ is the class of a vector bundle $\me$ of rank $r$ over $S$, then $\alpha^{[n]}$ is the class of $(p_n)_*(q_n^*\me|_{\Sigma_n})$ which is a vector bundle of rank $nr$ over $S^{[n]}$. 

Let $L=dH$ with $d\geq 2$.  Let $\ls$ be the linear system of $L$ and let $\ls^{int}$ ($\ls^{sm}$, resp.) be the open subscheme of $\ls$ parametrizing integral (smooth, resp.) curves.  One can see that $\ls\setminus\ls^{int}$ is of codimension $d-1$ in $\ls$.  Therefore we can choose a generic $d-2$ dimensional projective space $\p^{d-2}\cong D\subset \ls$ such that $D\subset \ls^{int}$.  Recall we have $\mathtt{b}:=\dim\ls$.

Denote by $\mc_{\ls}^n$ the relative Hilbert scheme of $n$-points over the linear system $\ls$.  In other words, $\mc_{\ls}^n$ parametrizes all pairs $(Z,C)\in S^{[n]}\times\ls$ such that $Z \subset C\subset S$.  If $n=1$, then $\mc_{\ls}^n$ is the universal curve in $S\times\ls$ which we denote by $\mc_{\ls}$.

We can view $\mc_{\ls}^n$ as a closed subscheme of $S^{[n]}\times \ls$.  Denote by $\mc_{\ls^{int}}^n$ ($\mc_{\ls^{sm}}^n$, $\mc_{D}^n$, resp.) the pullback of $\mc_{\ls}^n$ via $\ls^{int}\hookrightarrow\ls$ ($\ls^{sm}$, $D\hookrightarrow\ls$, resp.).  By abuse of notation, we denote by $\xi$ ($P_i^{[n]}$, resp.) both the hyperplane class (the $i$-th Chern class of the rank $n$ vector bundle $[L]^{[n]}$, resp.) on $\ls$ ($S^{[n]}$, resp.) and its pullback to $S^{[n]}\times \ls$.   Define
\begin{equation}\label{defpx}P^{[n]}(\xi):=P_n^{[n]}+P_{n-1}^{[n]}\xi+\cdots+P_1^{[n]}\xi^{n-1}+\xi^{n}\in A^n(S^{[n]}\times \ls).\end{equation}
\begin{prop}\label{clofc}In $A^*(S^{[n]}\times\ls^{int})$ we have 
$$[\mc_{\ls^{int}}^n]=P^{[n]}(\xi)|_{S^{[n]}\times\ls^{int}}.$$
In particular in $A^*(S^{[n]}\times\ls)$ we have
$$[\mc_{D}^n]=\xi^{\ttb-(d-2)}\cdot(P^{[n]}(\xi)).$$
\end{prop}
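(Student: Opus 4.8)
The plan is to prove the statement in two stages corresponding to the two displayed equations in Proposition \ref{clofc}.

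First I would establish the identity
\[[\mc_{\ls^{int}}^n]=P^{[n]}(\xi)|_{S^{[n]}\times\ls^{int}}\]
on the open locus $S^{[n]}\times\ls^{int}$. The key geometric input is that $\mc_{\ls}^n$ parametrizes pairs $(Z,C)$ with $Z\subset C$, so that $\mc_{\ls}^n$ is precisely the zero locus of the natural section cutting out the condition ``$Z$ lies on $C$.'' More precisely, a curve $C\in\ls$ is the vanishing of a section of $L=dH$; restricting this section to the length-$n$ subscheme $Z$ gives, fiberwise, an element of $H^0(Z,L|_Z)$, and this assembles into a morphism of the line bundle $\mo_{\ls}(-1)$ (the tautological sub giving the defining equation) into the rank-$n$ tautological bundle $[L]^{[n]}$ over $S^{[n]}\times\ls$. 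The condition $Z\subset C$ is exactly the vanishing of the induced section of the rank-$n$ bundle $[L]^{[n]}\otimes\mo_{\ls}(1)$. Over the integral locus this section is regular (the expected codimension $n$ is achieved because on integral curves the incidence scheme $\mc_{\ls^{int}}^n$ has the correct dimension), so the Thom-Porteous/top-Chern-class formula gives
\[[\mc_{\ls^{int}}^n]=c_n\bigl([L]^{[n]}\otimes\mo_{\ls}(1)\bigr)\Big|_{S^{[n]}\times\ls^{int}}.\]
Expanding this Chern class via the splitting principle, $c_n([L]^{[n]}\otimes\mo_{\ls}(1))=\sum_{i=0}^n c_{n-i}([L]^{[n]})\,\xi^{i}$, and identifying $c_i([L]^{[n]})=P_i^{[n]}$ and the hyperplane class with $\xi$, yields exactly $P^{[n]}(\xi)$ as defined in (\ref{defpx}). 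I would need to verify carefully that the relevant section is regular over $\ls^{int}$; this is where the dimension count ensuring the proper codimension $n$ is essential.

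For the second identity, I would intersect with $D$. By construction $D\cong\p^{d-2}$ is a generic linear subspace of $\ls$ contained in $\ls^{int}$, cut out by a complete intersection of $\ttb-(d-2)$ hyperplanes, so $[D]=\xi^{\ttb-(d-2)}$ in $A^*(\ls)$. Since $D\subset\ls^{int}$ and the intersection is transverse for generic $D$ (again using that $\mc_{\ls^{int}}^n$ meets $S^{[n]}\times D$ properly), pulling back the first identity and applying the projection/intersection formula gives
\[[\mc_{D}^n]=[\mc_{\ls^{int}}^n]\cdot[S^{[n]}\times D]=\bigl(P^{[n]}(\xi)\bigr)\cdot\xi^{\ttb-(d-2)}\]
in $A^*(S^{[n]}\times\ls)$, which is the claimed formula.

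The main obstacle I anticipate is justifying the first (open-locus) identity rigorously, specifically the regularity of the tautological section and the precise identification of the bundle whose top Chern class computes $[\mc_{\ls^{int}}^n]$. One must check that over $\ls^{int}$ the incidence locus genuinely has the expected codimension $n$ so that the zero locus is generically reduced and the Chern-class computation is valid; degenerations where $Z$ fails to impose independent conditions on $\ls$ could in principle spoil this, and restricting to the integral locus (and to a generic $D$) is exactly what eliminates the excess-dimension components. The passage from the integral locus to the global class of $\mc_D^n$ is comparatively routine once genericity of $D$ and properness of the intersection are in place.
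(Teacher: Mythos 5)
Your proposal follows essentially the same route as the paper: realize $\mc_{\ls}^n$ inside $S^{[n]}\times\ls$ as the zero locus of the tautological morphism $\mo_{\ls}(-\xi)\to[L]^{[n]}$, use the fact that over $\ls^{int}$ the incidence variety has the expected dimension $n+\ttb$ (the paper cites Rego for the fiber dimension and uses density of the smooth locus $\mc_{\ls^{sm}}^n$) to conclude the class is $c_n([L]^{[n]}\otimes\mo(\xi))=P^{[n]}(\xi)$, and then cut with $[D]=\xi^{\ttb-(d-2)}$. The one step you flag as needing care — regularity of the section over $\ls^{int}$ — is exactly the point the paper settles via the fiber-dimension count, so your outline matches the paper's proof.
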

\begin{proof}By definition $[L]^{[n]}$ is the rank $n$ tautological vector bundle over $S^{[n]}$ via the assignment
\[Z\mapsto H^0(L\otimes \mo_Z).\]
We have the evaluation map $H^0(L)\xrightarrow{v_Z}H^0(L\otimes\mo_Z)$, and $\mc_{\ls}^n\subset S^{[n]}\times\p(H^0(L))$ consists of all pairs $(Z,[\bc s])\in S^{[n]}\times \p(H^0(L))$ such that $s\in \Ker(v_Z)\setminus{0}$.

Therefore, $\mc_{\ls}^n$ can be embedded into $\p(\mo_{S^{[n]}}\otimes H^0(L)^{\vee})\cong S^{[n]}\times \ls$ as the zero locus of the homomorphism
\[\pi_s^*([L]^{[n]})^{\vee}\ra \pi_s^{*}(\mo_{S^{[n]}}\otimes H^0(L)^{\vee})\ra \mo_{\pi_s}(1),\]
where $\pi_s:\p(\mo_{S^{[n]}}\otimes H^0(L)^{\vee})\ra S^{[n]}$ is the projection.  It is easy to see that $\mo_{\pi_s}(1)\cong \pi_l^*\mo_{\ls}(1)\cong\mo_{S^{[n]}\times\ls}(\xi)$ with $\pi_l:\p(\mo_{S^{[n]}}\otimes H^0(L)^{\vee})\cong S^{[n]}\times \ls\ra\ls$ the projection.

Since the map $\mc_{\ls^{int}}^n\ra \ls^{int}$ has all its fibers of dimension $n$ (see e.g. p214, last paragraph in \cite{Re}), it contains $\mc_{\ls^{sm}}^{n}$ as a smooth dense subset and therefore $\mc_{\ls^{int}}^n$ is irreducible of dimension $n+\ttb$.   Therefore in $A^*(S^{[n]}\times \ls^{int})$
\begin{eqnarray}[\mc_{\ls^{int}}^n]&=&c_n(\mo_{\pi}(1)\otimes \pi^*([L]^{[n]}))|_{S^{[n]}\times\ls^{int}}\nonumber\\
&=&P^{[n]}(\xi)|_{S^{[n]}\times\ls^{int}}.\nonumber\end{eqnarray}

Since $D\subset\ls^{int}$ and $[D]=\xi^{\ttb-(d-2)}$ in $A^*(\ls)$, the statement for $[\mc_D^n]$ follows straightforward.  
\end{proof}

\begin{rem}At this moment we can not prove that $\mc_{\ls}^{n}$ is irreducible of dimension $n+\ttb$.  However if it is true, then we would have in $A^*(S^{[n]}\times \ls)$
\[ [\mc_{\ls}^n]=P^{[n]}(\xi).\]
It will be interesting to check in the future whether the fibers of the projection $\mc_{\ls}^{n}\ra \ls$ over non-integral curves are also of dimension $n$.\end{rem}

\subsection{To relate $M(d,\chi)$ to $\mc_{\ls}^n$.} For any map $f:X\ra Y$, define $f^S:=id_S\times f: S\times X\ra S\times Y$. 
We have the following commutative diagram
\begin{equation}\label{csl}\xymatrix@C=1.2cm{&S\times\mc_{\ls}^n\ar[ld]_{p_c}\ar[r]^{\imath^S}\ar@/^2pc/[rr]^{q_1}&S\times S^{[n]}\times \ls\ar[d]^{\pi_s^S}\ar[ld]_{p_{23}}\ar[r]^{\quad q}\ar[rd]^{\qquad q }&S\\ \mc_{\ls}^n\ar[r]^{\imath}\ar[d]_{\varphi_l=\pi_l\circ\imath}\ar[rd]^{\varphi_s}&S^{[n]}\times \ls\ar[d]_{\pi_s}\ar[rd]_{p_n\qquad}&S\times S^{[n]} \ar[ld]^{\qquad \pi_l}\ar[ru]_{\pi_l^S}&S\times\ls\ar[ld]_{p_l}  \\ \ls& S^{[n]}&\ls&}
\end{equation}
Over $S\times S^{[n]}\times\ls$ we have the diagram
\begin{equation}\label{tuf}\xymatrix@C=0.6cm@R=0.6cm{&&0\ar[d]&&\\ &&(\pi_s^S)^*\mi_n\otimes q^*L\ar[d]&&\\ 0\ar[r]&(\pi^S_l)^*p_{l}^*\mo_{\ls}(-\xi)\ar[r]^{\qquad u}&q^*L\ar[r]\ar[d]^{v}&(\pi^S_l)^*\mo_{\mc_{\ls}}\otimes q^*L\ar[r]&0\\ &&(\pi^S_s)\mo_{\Sigma_n}\otimes q^*L\ar[d]&&\\ &&0&& },\end{equation}
where the map $v\circ u$ is zero over $S\times \mc_{\ls}^n$.   Therefore over $S\times\mc_{\ls}^n$ we have a non-zero hence injective map $$p_c^*\varphi_l^*\mo_{\ls}(-\xi)=(\imath^S)^*(\pi^S_l)^*p_{l}^*\mo_{\ls}(-\xi)\hookrightarrow (\imath^S)^*(\pi_s^S)^*\mi_n\otimes q^*L$$ and thus an exact sequence as follows.
\begin{equation}\label{ufrh}0\ra (\imath^S)^*(\pi^S_l)^*p_{l}^*\mo_{\ls}(-\xi)\xrightarrow{\tilde{v}} (\imath^S)^*(\pi_s^S)^*\mi_n\otimes q^*L\ra\F\ra0.
\end{equation}
The map $\tilde{v}$ restricted to every fiber over $(Z,C)\in\mc_{\ls}^n$ is a non-zero hence injective map $\mo_S\ra I_Z\otimes L$, therefore the sheaf $\F$ is a flat family of purely 1-dimensional sheaves on $S$ parametrized by $\mc_{\ls}^n$.  Actually for any $(Z,C)\in\mc_{\ls}^n$, $\F_{(Z,C)}\cong I_{Z/C}\otimes L$ where $I_{Z/C}$ is the ideal sheaf of $Z$ in $C$.  This is because that $\F_{(Z,C)}$ lies in the following exact sequence
\begin{equation}\label{ufrhp}0\ra\mo_S\ra I_Z\otimes L\ra \F_{(Z,C)}\ra 0.
\end{equation} 
One can see that $\det(\F_{(Z,C)})=L=dH$ and $\chi(\F_{(Z,C)})=\chi(L)-n-1$.  If $C$ is integral, then $\F_{(Z,C)}$ is stable.  Let $\chi=\chi(L)-n-1$.  

Denote by $\F_{int}$ ($\F_D$, resp.) the restriction of $\F$ to $S\times \mc_{\ls^{int}}^n$ ($S\times \mc_D^n$, resp.).  We have a classifying map $f:\mc_{\ls^{int}}^n\ra M(d,\chi)$ induced by $\F_{int}$. Denote by $M(d,\chi)_{int}$ ($M(d,\chi)_D$, resp.) the pullback of $\ls^{int}$ ($D$, resp.) via the Hilbert-Chow morphism $\pi:M(d,\chi)\ra\ls$.  Obviously $f(\mc_{\ls^{int}}^n)\subset M(d,\chi)_{int}$, $f(\mc_{D}^n)\subset M(d,\chi)_{D}$ and we have the following commutative diagram
\begin{equation}\label{cmcd}\xymatrix@C=1.2cm{\mc_D^n\ar@{^{(}->}[r]\ar[rrdd]_{\pi_l}\ar@/^2pc/[rrr]^{f}&\mc_{\ls^{int}}^n\ar[r]^{f\quad}\ar[rd]_{\pi_l}&M(d,\chi)_{int}\ar[d]^{\pi}&M(d,\chi)_D\ar@{_{(}->}[l]\ar[ldd]_{\pi}\\&&\ls^{int}&\\ &&D\ar@{^{(}->}[u]&}.\end{equation}
\begin{lemma}\label{ctom}For $n>d^2-3d$, $f$ in (\ref{cmcd}) is a projective bundle.  In particular $\mc_{\ls^{int}}^n$ is smooth and $\mc_{D}^n$ is smooth for a generic $\p^{d-2}\cong D\subset\ls^{int}$. 
\end{lemma}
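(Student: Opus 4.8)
The plan is to identify the fibres of $f$ with projective spaces of homomorphisms and then globalize. Fix $\mf\in M(d,\chi)_{int}$, supported on an integral curve $C\in\ls^{int}$; since $C=\pi(\mf)$ is determined by $\mf$, the fibre $f^{-1}(\mf)$ lies inside $\{(Z,C):Z\subset C\}$. By the construction of $\F$ we have $\F_{(Z,C)}\cong I_{Z/C}\otimes L$, so $\F_{(Z,C)}\cong\mf$ amounts to an isomorphism $\mf\otimes (L|_C)^{-1}\cong I_{Z/C}$ compatible with the inclusion $I_{Z/C}\hookrightarrow\mo_C$. Thus I would first show that producing such a $Z$ is the same as giving a nonzero map $\mf\otimes(L|_C)^{-1}\to\mo_C$, i.e. a nonzero element of $\Hom_C(\mf,L|_C)$ up to scalar. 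The point to verify is that on the integral curve $C$ any nonzero such map is automatically injective with $0$-dimensional cokernel of length $n$: the kernel is a torsion subsheaf of the torsion-free rank-$1$ sheaf $\mf\otimes(L|_C)^{-1}$, hence zero, and $\chi(\mo_C)-\chi(\mf\otimes(L|_C)^{-1})=n$ forces the cokernel to be the structure sheaf of a length-$n$ subscheme. Hence $f^{-1}(\mf)\cong\p(\Hom_C(\mf,L|_C))$ as sets.

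The decisive computation is the vanishing $\Ext^1_C(\mf,L|_C)=0$, which simultaneously makes $\Hom$ of constant dimension and is exactly where the hypothesis $n>d^2-3d$ enters. By Serre duality on the Gorenstein curve $C$,
\[\Ext^1_C(\mf,L|_C)\cong\Hom_C(L|_C,\mf\otimes\omega_C)^\vee\cong H^0(C,I_{Z/C}\otimes\omega_C)^\vee,\]
using $\mf\cong I_{Z/C}\otimes L$. By adjunction $\omega_C\cong\mo_C((d-3)H)$ has degree $d^2-3d$, so $I_{Z/C}\otimes\omega_C$ is a rank-$1$ torsion-free sheaf of degree $d^2-3d-n$, which is negative precisely when $n>d^2-3d$; a rank-$1$ torsion-free sheaf of negative degree on an integral curve has no nonzero sections, giving the vanishing. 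A Riemann--Roch count then yields $\dim\Hom_C(\mf,L|_C)=n-\tfrac{d^2-3d}{2}$, independent of $\mf$, and the resulting fibre dimension $n-\tfrac{d^2-3d}{2}-1$ agrees with $\dim\mc_{\ls^{int}}^n-\dim M(d,\chi)=(n+\ttb)-(d^2+1)$.

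To globalize I would pass to the universal sheaf $\E$ on $S\times M(d,\chi)_{int}$ (which exists since $(d,\chi)=1$) and form the relative extension sheaf $\mv:=\mathcal{E}xt^1_{\pi_M}(\E,\mo)$, using the duality identification $\mathcal{E}xt^1_{\mo_S}(\mf,\mo_S)\cong\mathcal{H}om_C(\mf,L|_C)$, whose fibre at $\mf$ is $\Hom_C(\mf,L|_C)$. Since the fibrewise $\Ext^1$ vanishes and $\Hom$ has constant rank, cohomology and base change shows $\mv$ is locally free of rank $n-\tfrac{d^2-3d}{2}$ with formation commuting with base change, so $\p(\mv)\to M(d,\chi)_{int}$ is a projective bundle. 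The defining sequence (\ref{ufrh}) --- the tautological inclusion $\varphi_l^*\mo_{\ls}(-\xi)\hookrightarrow(\imath^S)^*(\pi_s^S)^*\mi_n\otimes q^*L$ together with $(f^S)^*\E\cong\F_{int}\otimes(\text{a line bundle from }M(d,\chi)_{int})$ --- supplies a canonical morphism $\mc_{\ls^{int}}^n\to\p(\mv)$ over $M(d,\chi)_{int}$ inverting the set-theoretic description. I expect the \emph{main obstacle} to be precisely this last step: upgrading the bijection on points to an isomorphism of schemes, i.e. checking functorially that $\mc_{\ls^{int}}^n$ carries the universal nonzero homomorphism (so that it represents $\p(\mv)$) and that the line-bundle twist relating $(f^S)^*\E$ and $\F_{int}$ is tracked correctly through cohomology and base change.

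Finally the two smoothness assertions follow formally. As a projective bundle over the smooth variety $M(d,\chi)_{int}$ (open in the smooth $M(d,\chi)$), $\mc_{\ls^{int}}^n$ is smooth. For $\mc_D^n=\varphi_l^{-1}(D)$ I would first note that a generic $\p^{d-2}\cong D$ lies in $\ls^{int}$, since $\ls\setminus\ls^{int}$ has codimension $d-1$ and a generic $(d-2)$-plane avoids it; then, writing $D$ as a transverse intersection of $\ttb-(d-2)$ generic hyperplanes and applying Bertini to the base-point-free system $\varphi_l^*|\mo_{\ls}(1)|$ on the smooth variety $\mc_{\ls^{int}}^n$, one concludes that $\mc_D^n$ is smooth for generic $D$.
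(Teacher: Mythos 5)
Your argument is correct and matches the paper's proof in substance: the paper identifies the fibre of $f$ over $\mf\in M(d,\chi)_{int}$ as $\p(\Ext^1_S(\mf,\mo_S))$ and shows its dimension is the constant $-\chi(\mf\otimes K_S)=n-\tfrac{d^2-3d}{2}$ because $h^0(\mf\otimes K_S)=0$ once $\deg(\mf\otimes K_S)<0$, i.e.\ $n>d^2-3d$; your space $\Hom_C(\mf,L|_C)$ is canonically isomorphic to that $\Ext^1$ by Serre duality, and your vanishing $H^0(C,I_{Z/C}\otimes\omega_C)=0$ is literally the same statement since $I_{Z/C}\otimes\omega_C\cong\mf\otimes K_S$. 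The paper's proof is only a few lines and leaves the globalization and the Bertini argument for $\mc_D^n$ implicit, so your more detailed write-up simply fills in steps the paper takes for granted.
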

\begin{proof}The fiber of $f$ over any $\mf\in M(d,\chi)_{int}$ is $\p(\Ext^1(\mf,\mo_S))$.  If $n>d^2-3d$, then for any $\mf\in M(d,\chi)_{int}$, $\deg (\mf\otimes K_S)<0$ and $h^0(\mf\otimes K_S)=0$.  Hence $\dim\Ext^1(\mf,\mo_S)=-\chi(\mf\otimes K_S)=n-\frac{d^2-3d}2$ is a constant for any $\mf\in M(d,\chi)_{int}$.  The smoothness follows from the smoothness of $M(d,\chi)$.  Hence the lemma.
\end{proof}
From now on we let $n>d^2-3d$ and $D$ generic,  hence $\mc_{\ls^{int}}^n$ and $\mc_D^n$ are smooth.  Define $H_{(n)}:=\det ([H-\mo_S]^{[n]})=\int_H \text{ch}_2(\mo_{\Sigma_n})$ to be a line bundle over $S^{[n]}$. \footnote{~In fact $H_{(n)}$ can be defined more concretely:   we have a Hibert-Chow morphism $S^{[n]}\ra S^{(n)}$ with $S^{(n)}$ the $n$-th symmetric power of $S$,  then $H_{(n)}$ is the pullback of the line bundle $(H^{\boxtimes n})^{\mathfrak{S}_n}$ on $S^{(n)}$ where $\mathfrak{S}_n$ is the $n$-th symmetric group.} 

Recall that we have a universal sheaf $\E$ on $M(d,\chi)$ and $\text{ch}^{\alpha}(\E)\in A^*(S\times M(d,\chi))$ defined in (\ref{defc2}).

\begin{lemma}\label{pbea}We have $(f^S)^*\emph{ch}^{\alpha}(\E)=\emph{ch}(\F_{int})\cdot \emph{exp}(\frac 1dp_c^*\varphi_s^*H_{(n)})=:\emph{ch}^{H_{(n)}}(\F)$.
\end{lemma}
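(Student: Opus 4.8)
The plan is to exploit the fact that $M(d,\chi)$ is a fine moduli space, write $\F_{int}$ in terms of $(f^S)^*\E$, and thereby reduce the full Chern-character identity to a single divisor computation. Since $f$ is the classifying map of the family $\F_{int}$, whose fibers over $\mc_{\ls^{int}}^n$ are stable by construction, the universal property of $M(d,\chi)$ furnishes a line bundle $\cl$ on $\mc_{\ls^{int}}^n$ with
\[\F_{int}\cong (f^S)^*\E\otimes p_c^*\cl,\]
so that $\text{ch}(\F_{int})=(f^S)^*\text{ch}(\E)\cdot\exp(p_c^*c_1(\cl))$. Moreover, writing $\beta:=\int_H\text{ch}_2(\E)$, from $\alpha=-\frac1d\pi_M^*\beta$ and the commutativity $\pi_M\circ f^S=f\circ p_c$ we get $(f^S)^*\alpha=-\frac1dp_c^*f^*\beta$. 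Expanding both sides of the asserted equality and using that $p_c^*$ is injective, the lemma reduces to the single identity
\[c_1(\cl)=-\frac1d\left(f^*\beta+\varphi_s^*H_{(n)}\right).\]

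To establish this I would compute $\int_H\text{ch}_2(\F_{int})$ in two ways. Using $\F_{int}\cong(f^S)^*\E\otimes p_c^*\cl$ together with $\text{ch}_0(\E)=0$ and $(f^S)^*\text{ch}_1(\E)=dH+\xi$ (the pullback of $\text{ch}(\E)_1=d\pi_S^*H+\pi_M^*\xi$), one finds $\text{ch}_2(\F_{int})=(f^S)^*\text{ch}_2(\E)+(dH+\xi)\cdot p_c^*c_1(\cl)$. Applying $\int_H=p_{c*}(H\cdot-)$ and invoking $p_{c*}(H^2)=\mathbf{1}$, $p_{c*}(H)=0$ and the base change $p_{c*}(f^S)^*=f^*\pi_{M*}$ gives
\[\int_H\text{ch}_2(\F_{int})=f^*\beta+d\cdot c_1(\cl).\]

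On the other hand, the exact sequence (\ref{ufrh}) gives, in $K$-theory on $S\times\mc_{\ls^{int}}^n$,
\[[\F_{int}]=[(\varphi_s^S)^*\mi_n\otimes q^*L]-[p_c^*\varphi_l^*\mo_{\ls}(-\xi)],\qquad \varphi_s^S:=id_S\times\varphi_s.\]
Since $\text{ch}(\mi_n)=1-\text{ch}(\mo_{\Sigma_n})$, so that $\text{ch}_1(\mi_n)=0$ and $\text{ch}_2(\mi_n)=-[\Sigma_n]$, and since $\text{ch}(q^*L)=\exp(dH)$, a short expansion yields $\text{ch}_2(\F_{int})=\frac{d^2H^2}2-(\varphi_s^S)^*[\Sigma_n]-\frac{\xi^2}2$. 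Applying $\int_H$ annihilates the $H^2$- and $\xi^2$-terms because $H^3=0$ on $\p^2$ and $p_{c*}(H)=0$, while base change along the Cartesian square relating $\varphi_s^S,p_c,p_n,\varphi_s$ gives $p_{c*}\!\left(H\cdot(\varphi_s^S)^*[\Sigma_n]\right)=\varphi_s^*p_{n*}(q_n^*H\cdot[\Sigma_n])=\varphi_s^*H_{(n)}$. Hence $\int_H\text{ch}_2(\F_{int})=-\varphi_s^*H_{(n)}$. Equating the two computations gives $d\cdot c_1(\cl)=-f^*\beta-\varphi_s^*H_{(n)}$, which is precisely the reduced identity; feeding it back into the comparison of the first paragraph proves the lemma.

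I expect the main obstacle to be this second evaluation of $\int_H\text{ch}_2(\F_{int})$: one must correctly identify each pulled-back factor of (\ref{ufrh}) through diagram (\ref{csl}) — in particular that $(\imath^S)^*(\pi_s^S)^*\mi_n=(\varphi_s^S)^*\mi_n$ and that the sub-line-bundle term is $p_c^*\varphi_l^*\mo_{\ls}(-\xi)$ — and then justify the base-change identity $p_{c*}\!\left(H\cdot(\varphi_s^S)^*[\Sigma_n]\right)=\varphi_s^*H_{(n)}$, which is exactly where the definition $H_{(n)}=\int_H\text{ch}_2(\mo_{\Sigma_n})$ enters decisively. The remaining manipulations are routine Chern-character bookkeeping, provided one keeps track of which factor each of $H$, $\xi$ and $[\Sigma_n]$ lives on.
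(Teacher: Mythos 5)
Your proposal is correct and follows essentially the same route as the paper: both use the fine-moduli property to write $\F_{int}$ and $(f^S)^*\E$ as differing by a line bundle pulled back from $\mc_{\ls^{int}}^n$, reduce the lemma to an identity in $A^1(\mc_{\ls^{int}}^n)$ by applying $\int_H(\cdot)_2$, and evaluate via the resolution (\ref{ufrh}) together with $\mathrm{ch}(\mi_n)=1-\mathrm{ch}(\mo_{\Sigma_n})$ and the definition $H_{(n)}=\int_H\mathrm{ch}_2(\mo_{\Sigma_n})$. The only cosmetic difference is that the paper phrases the final step as showing $\int_H(\cdot)_2=0$ on both sides (using $c_1(1)=0$), whereas you solve directly for $c_1(\cl)$ by computing $\int_H\mathrm{ch}_2(\F_{int})$ twice.
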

\begin{proof}Since $f$ is the classifying map induced by $\F_{int}$, $\exists ~\ma\in A^1(\mc_{\ls^{int}}^n)$ such that $\F_{int}\otimes p_c^*\ma\cong (f^S)^*\E$.  Therefore
\[(f^S)^*\text{ch}^{\alpha}(\E)=\text{ch}(\F_{int})\cdot\text{exp}((f^S)^*\alpha+ p_c^*\ma).\]
By (\ref{defal2}) we have 
$$(f^S)^*\alpha=-\frac 1d(f^S)^*\pi_M^*\left(\int_H\text{ch}_2(\E)\right)=-\frac 1dp_c^*f^*\left(\int_H\text{ch}_2(\E)\right)$$
Hence we only need to show in $A^1(\mc^n_{\ls^{int}})$
\[-f^*\left(\int_H\text{ch}_2(\E)\right)+\ma=\varphi_s^*H_{(n)}.\]

On the other hand, we know that $\text{ch}_0(\F_{int})=0$ and $\text{ch}_1(\F_{int})$ is the class of the support of $\F_{int}$.  Hence $\text{ch}_1(\F_{int})=(\imath^S)^*(\pi_l^S)^*[\mc_{\ls}]=dq^*H+p_c^*\varphi_l^*\xi$.  Therefore for any $\widetilde{\ma}\in A^1(\mc_{\ls^{int}}^n)$,
\[(\text{ch}(\F)\cdot\text{exp}(p_c^*\widetilde{\ma}))_2=\text{ch}_2(\F)+(dq^*H+p_c^*\varphi_l^*\xi)\cdot (p_c^*\widetilde{\ma}),\]
and
\[\int_H(\text{ch}(\F)\cdot\text{exp}(p_c^*\widetilde{\ma}))_2=\int_H\text{ch}_2(\F)+d\widetilde{\ma}\in A^1(\mc_{\ls^{int}}^n),\]
Therefore it is enough to show 
\[\int_H\text{ch}^{H_{(n)}}(\F)_2=\int_H ((f^S)^*\text{ch}^{\alpha}(\E))_2=f^*\left(\int_H(\text{ch}^{\alpha}(\E))_2\right)=f^*c_1(1)=0.\]
By (\ref{ufrh}), it suffices to show
\[\int_H(\text{ch}(\mi_n\otimes q^*(dH))\cdot \text{exp}(\frac 1d p_n^*H_{(n)}))_2=0.\]
Since $\text{ch}(\mi_n)=1-\text{ch}(\mo_{\Sigma_n})$, $\text{ch}_i(\mo_{\Sigma_n})=0$ for $i=0,1$, we have
\begin{eqnarray}&&\int_H(\text{ch}(\mi_n\otimes q^*(dH))\cdot \text{exp}(\frac 1d p_n^*H_{(n)}))_2\nonumber\\
&=&\int_H(\text{exp}(dq^*H+\frac1dp_n^*H_{(n)}))_2-\int_H(\text{ch}(\mo_{\Sigma_n})\cdot \text{exp}(dq^*H+\frac1dp_n^*H_{(n)}))_2\nonumber\\
&=&H_{(n)}-\int_H\text{ch}_2(\mo_{\Sigma_n})=0.\nonumber
\end{eqnarray}
The lemma is proved.
\end{proof}

By Lemma \ref{pbea} and (\ref{ufrh}) we have
\[ (f^S)^*\text{ch}^{\alpha}(\E)=(\imath^S)^*\big((\pi_s^S)^*\text{ch}(\mi_n\otimes q^*(dH))-(\pi_l^S)^*p_l^*\text{exp}(-\xi)\big)\cdot \text{exp}(\frac1dp_c^*\varphi_s^*H_{(n)}).\]
By a direct computation we have 
\begin{eqnarray}&&f^*c_k(j)=\int_H^{j}(f^S)^*(\text{ch}^{\alpha}(\E))_{k+1}\nonumber\\
&=&\int_{H^j}\big\{(\imath^S)^*\big((\pi_s^S)^*\text{ch}(\mi_n\otimes q^*(dH))-(\pi_l^S)^*p_l^*\text{exp}(-\xi)\big)\cdot \text{exp}(\frac1dp_c^*\varphi_s^*H_{(n)})\big\}_{k+1}\nonumber\\
&=&\int_{H^j}\big\{(\imath^S)^*\big(((\pi_s^S)^*\text{ch}(\mi_n\otimes q^*(dH))-(\pi_l^S)^*p_l^*\text{exp}(-\xi))\cdot \text{exp}(\frac1d(\pi_s^S)^*p_n^*H_{(n)})\big)\big\}_{k+1}\nonumber\\
&=&\imath^*\left(\pi_s^*\int_{H^j}(\text{ch}(\mi_n)\cdot\text{exp}(\frac1dp_n^*H_{(n)}+q^*(dH)))_{k+1}-\delta_{j,2}(\text{exp}(\frac1d\pi_s^*p_n^*H_{(n)}-\xi))_{k+1}\right),\nonumber\\
\end{eqnarray}
where $\delta_{j,2}=\begin{cases}1,j=2\\0,\text{othewise}\end{cases}$.
Notice that $\xi$ stands for both the hyperplane class on $\ls$ and its pullback to $S^{[n]}\times \ls$.  Define
\begin{equation}\label{defga}\gamma_k(j):=\pi_s^*\int_{H^j}(\text{ch}(\mi_n)\cdot\text{exp}(\frac1dp_n^*H_{(n)}+q^*(dH)))_{k+1}-\delta_{j,2}(\text{exp}(\frac1d\pi_s^*p_n^*H_{(n)}-\xi))_{k+1}.\end{equation}
Then 
\begin{equation}\label{gatoc}\gamma_k(j)\in A^{i+k-1}(S^{[n]}\times \ls) \text{ and }\imath^*\gamma_k(j)=f^*c_k(j),\forall~k,j.
\end{equation}
Let $\Gamma$ be the subring of $A^*(S^{[n]}\times \ls)$ generated by $\{\gamma_k(j)\}_{k,j}$.  Let $\Gamma^m:=\Gamma\cap A^{\frac{m}2}(S^{[n]}\times\ls)$.  For every $c\in A^{\frac{m}2}(M(d,\chi))=H^m(M(d,\chi),\bq)$, we can find an element $\gamma(c)\in \Gamma^m$ such that $\imath^*(\gamma(c))=f^*c$.  The choice of $\gamma(c)$ may not be unique since $\imath^*$ is not injective.  

Recall we have defined in (\ref{defpx}) that
\[P^{[n]}(\xi):=P_n^{[n]}+P_{n-1}^{[n]}\xi+\cdots+P_1^{[n]}\xi^{n-1}+\xi^{n}\in A^n(S^{[n]}\times \ls).\]
Define a increasing filtration $\widetilde{P}_{\bullet}$ on $\Gamma$ as follows.
\begin{equation}\label{alpf}\widetilde{P}_k\Gamma^m:=\left\{\gamma\in\Gamma^m\left| \gamma\cdot P^{[n]}(\xi)\in \sum_{i\geq 1}\Ker(\xi^{\ttb+k+i-m})\cap \big(\xi^{i-1}\cdot P^{[n]}(\xi)\cdot\Gamma\big)\right.\right\}.\end{equation}
We say a $\gamma\in \Gamma$ is \emph{almost of perversity $k$} if $\gamma\in \widetilde{P}_k\Gamma\setminus \widetilde{P}_{k-1}\Gamma.$

\begin{prop}\label{fmtoh}For every $c\in H^{m}(M(d,\chi),\bq)$, let $\gamma(c)\in \Gamma^m$ such that $\imath^*(\gamma(c))=f^*c$.  Let $m-k\leq d-1$.  Then the following two statements are equivalent.
\begin{itemize}
\item[(1)]$c\in P_kH^m(M(d,\chi),\bq)$;

~~

\item[(2)]$\gamma(c)\in \widetilde{P}_k\Gamma^m$.
\end{itemize}
In particular if $m-k\leq d-2$.  Then the following two statements are equivalent.
\begin{itemize}
\item[(1)]$c$ is of perversity $k$;

~~

\item[(2)]$\gamma(c)$ is almost of perversity $k$.
\end{itemize}

\end{prop}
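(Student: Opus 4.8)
The plan is to use the two maps in diagram (\ref{cmcd}) as a bridge between $M(d,\chi)$ and $S^{[n]}\times\ls$. By Lemma \ref{ctom} the classifying map $f\colon \mc_{\ls^{int}}^n\ra M(d,\chi)_{int}$ is a projective bundle, so $f^*$ is injective and $H^*(\mc_{\ls^{int}}^n,\bq)$ is a free $f^*H^*(M(d,\chi)_{int},\bq)$-module on the powers $\zeta^j$ of the relative hyperplane class. Since both $f$ and $\imath$ cover the maps to $\ls$, the hyperplane class satisfies $f^*\xi=\imath^*\xi$, so multiplication by $\xi$ is compatible across $M(d,\chi)$, $\mc_{\ls^{int}}^n$ and $S^{[n]}\times\ls$. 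The essential bridge is the self-intersection formula for the closed embedding $\imath$: by Proposition \ref{clofc} one has $\imath_*\imath^*\gamma=\gamma\cdot P^{[n]}(\xi)$ over $\ls^{int}$, and since $\xi$ is pulled back, the projection formula gives $\imath_*(\xi^{i-1}\imath^*\gamma)=\xi^{i-1}P^{[n]}(\xi)\cdot\gamma$. Combining these with $\imath^*\gamma(c)=f^*c$ from (\ref{gatoc}) yields the identity $\imath_*(f^*c)=\gamma(c)\cdot P^{[n]}(\xi)$. This is the key point: it shows first that $\gamma(c)\cdot P^{[n]}(\xi)$ is independent of the chosen representative $\gamma(c)$, and second that every cohomological statement about $f^*c$ becomes a statement about the element $\gamma(c)\cdot P^{[n]}(\xi)$ appearing in the definition (\ref{alpf}) of $\widetilde P_\bullet\Gamma$.

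For the implication $(1)\Rightarrow(2)$ I would begin from Proposition \ref{desp}: if $c\in P_kH^m(M(d,\chi),\bq)$ then $c=\sum_i c_i$ with $c_i=\xi^{i-1}b_i$ and $\xi^{\ttb+k+i-m}c_i=0$. As $\ls\setminus\ls^{int}$ has codimension $d-1$, restriction identifies $H^*(M(d,\chi),\bq)$ with $H^*(M(d,\chi)_{int},\bq)$ in the relevant range of degrees, so I may test the filtration conditions on $M(d,\chi)_{int}$. Each $f^*c_i=\xi^{i-1}f^*b_i=\xi^{i-1}\imath^*\gamma(b_i)$ lies in the image of $\imath^*\Gamma$; applying $\imath_*$ gives $\gamma(c_i)\cdot P^{[n]}(\xi)=\xi^{i-1}P^{[n]}(\xi)\gamma(b_i)\in \xi^{i-1}P^{[n]}(\xi)\cdot\Gamma$, while pushing forward $\xi^{\ttb+k+i-m}c_i=0$ gives $\xi^{\ttb+k+i-m}\bigl(\gamma(c_i)P^{[n]}(\xi)\bigr)=0$. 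Summing over $i$ places $\gamma(c)\cdot P^{[n]}(\xi)$ inside the subspace defining $\widetilde P_k\Gamma^m$, that is $\gamma(c)\in\widetilde P_k\Gamma^m$.

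The implication $(2)\Rightarrow(1)$ runs the same chain in reverse and is where the real difficulty lies. Starting from a decomposition $\gamma(c)\cdot P^{[n]}(\xi)=\sum_i\xi^{i-1}P^{[n]}(\xi)\gamma'_i$ subject to the kernel conditions, I write each summand as $\imath_*(\xi^{i-1}\imath^*\gamma'_i)$, so that the identity reads $\imath_*\bigl(f^*c-\sum_i\xi^{i-1}\imath^*\gamma'_i\bigr)=0$. To conclude that $f^*c\in\sum_i\Ima(\xi^{i-1})$, and then, via the projective bundle splitting (comparing $\zeta^0$-components) together with the transferred kernel conditions and Proposition \ref{desp}, that $c\in P_kH^m(M(d,\chi),\bq)$, I need $\imath_*$ to be injective on the relevant classes. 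This is precisely the main obstacle, and it is entangled with the passage between the non-projective $\ls^{int}$, over which the formula $\imath_*\imath^*=\,\cdot\,P^{[n]}(\xi)$ of Proposition \ref{clofc} is valid, and the projective $S^{[n]}\times\ls$, where $\widetilde P_\bullet\Gamma$ lives. I expect to resolve this by working instead with the smooth projective $\mc_D^n$ for a generic $\p^{d-2}\cong D\subset\ls^{int}$, whose honest cycle class in $A^*(S^{[n]}\times\ls)$ is $\xi^{\ttb-(d-2)}P^{[n]}(\xi)$ by Proposition \ref{clofc}; restricting $M(d,\chi)$ to $M(d,\chi)_D=\pi^{-1}(D)$ amounts to cupping with $\xi^{\ttb-(d-2)}$, which by Proposition \ref{desp} sends a class of perversity $k$ either to zero or to a class of perversity $k$. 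The numerical hypothesis $m-k\leq d-1$ is exactly what guarantees that this restriction to $D$ loses no perversity information, i.e.\ that the primitive classes for $\pi$ survive multiplication by $\xi^{\ttb-(d-2)}$ and that the required injectivity holds in the relevant degrees; carefully tracking these exponents is the heart of the argument. Finally, the ``in particular'' statement for $m-k\leq d-2$ follows by applying the equivalence $(1)\Leftrightarrow(2)$ simultaneously to $k$ and to $k-1$.
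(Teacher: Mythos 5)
Your proposal follows essentially the same route as the paper's proof: both reduce to the generic $\p^{d-2}\cong D\subset\ls^{int}$ (equivalently, cup with $[M(d,\chi)_D]=\xi^{\ttb-(d-2)}$, which loses no perversity information precisely because $m-k\leq d-1$), use injectivity of $f^*$ for the projective bundle $f:\mc_D^n\ra M(d,\chi)_D$, and translate via the cycle class $[\mc_D^n]=\xi^{\ttb-(d-2)}P^{[n]}(\xi)$ of Proposition \ref{clofc} into the condition defining $\widetilde{P}_k\Gamma^m$ — the paper merely packages this as two chains of equivalences rather than a push--pull with $\imath_*$, and handles the ``in particular'' statement by the same two-level application you describe. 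The one caution is that your intermediate claim that $\gamma(c)\cdot P^{[n]}(\xi)$ is independent of the chosen representative $\gamma(c)$ is only valid after restriction to $\ls^{int}$ (equivalently after multiplying by $\xi^{\ttb-(d-2)}$), but this is exactly absorbed by the reduction to $D$ that you carry out anyway.
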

\begin{proof}Recall that we all denote by $\xi$ the hyperplane class on $\ls$ and its pullbacks to $M(d,\chi)$ and $S^{[n]}\times \ls$.  

Let $c\in H^m(M(d,\chi),\bq)$ and $m-k\leq d-1$.  
Since $\p^{d-2}\cong D\subset\ls^{int}$ is generic, $M(d,\chi)_D$ is projective and smooth.  Moreover $[M(d,\chi)_D]=[\xi^{\ttb-(d-2)}]$ in $A^{*}(M(d,\chi))$.  We have
\begin{eqnarray}\label{cgeq1}c\cdot[M(d,\chi)_D]&\in& \sum_{i\geq 1}\Ker(\xi^{(d-2)+k+i-m})\cap \big(\xi^{i-1}\cdot [M(d,\chi)_D]\cdot(A^*(M(d,\chi))\big)\nonumber\\&\Leftrightarrow& c\in \Ker([M(d,\chi)_D])+ \sum_{i\geq 1}\Ker(\xi^{\ttb+k+i-m})\cap \Ima(\xi^{i-1})
\nonumber\\&\Leftrightarrow& c\in\sum_{i\geq 1}\Ker(\xi^{\ttb+k+i-m})\cap \Ima(\xi^{i-1})\Leftrightarrow c \in P_kH^m(M(d,\chi),\bq).\end{eqnarray}
where the one before the last equivalence is because $m-k\leq d-1$ and hence $\Ker([M(d,\chi)_D])=\Ker(\xi^{\ttb-(d-2)})\subset\Ker(\xi^{\ttb+k+1-m})\cap\Ima(\xi^0)$, and the last equivalence is because of Proposition \ref{desp}.

On the other hand, we have a commutative diagram
\[\xymatrix{\mc_D^n\ar[r]^{f\quad} \ar[dr]_{\pi_l}&M(d,\chi)_D\ar@{^{(}->}[r]^{\jmath}\ar[d]^{\pi}&M(d,\chi)\\ & D&},\] 
where $f$ is a projective bundle.  $f^*:A^*(M(d,\chi)_D)\ra A^*(\mc_D^n)$ is injective.  By Proposition \ref{clofc} $[\mc_D^n]=\xi^{\ttb-(d-2)}\cdot P^{[n]}(\xi)$ in $A^*(S^{[n]}\times \ls)$.  Therefore
\begin{eqnarray}\label{cgeq2}&&c\cdot[M(d,\chi)_D]\in \sum_{i\geq 1}\Ker(\xi^{(d-2)+k+i-m})\cap \big(\xi^{i-1}\cdot [M(d,\chi)_D]\cdot(A^*(M(d,\chi))\big)\nonumber\\&\Leftrightarrow& f^*c \in \sum_{i\geq 1}\Ker(\xi^{(d-2)+k+i-m})\cap \big(\xi^{i-1}\cdot f^*\jmath^*(A^*(M(d,\chi)))\big)
\nonumber\\&\Leftrightarrow& \gamma(c)\cdot[\mc_D^n]\in \sum_{i\geq 1}\Ker(\xi^{(d-2)+k+i-m})\cap \big(\xi^{i-1}\cdot[\mc_D^n]\cdot \Gamma\big)
\nonumber\\&\Leftrightarrow& \gamma(c)\cdot \xi^{\ttb-(d-2)}\cdot P^{[n]}(\xi)\in\sum_{i\geq 1}\Ker(\xi^{(d-2)+k+i-m})\cap \big(\xi^{\ttb-(d-2)+i-1}\cdot P^{[n]}(\xi)\cdot \Gamma\big)\nonumber\\ &\Leftrightarrow& \gamma(c)\cdot P^{[n]}(\xi)\in\sum_{i\geq 1}\Ker(\xi^{\ttb+k+i-m})\cap \big(\xi^{i-1}\cdot P^{[n]}(\xi)\cdot\Gamma\big)+\Ker(\xi^{\ttb-(d-2)})\nonumber\\&\Leftrightarrow& \gamma(c) \in \widetilde{P}_k\Gamma_m,\nonumber\\
\end{eqnarray}
where the last equivalence is because $\gamma(c)\cdot P^{[n]}(\xi)\subset P^{[n]}(\xi)\cdot \Gamma$ and for $m-k\leq d-1$,
$$\Ker(\xi^{\ttb-(d-2)})\cap \big(P^{[n]}(\xi)\cdot\Gamma\big)\subset\Ker(\xi^{\ttb+k+1-m})\cap \big(P^{[n]}(\xi)\cdot\Gamma\big).$$
The proposition is proved.
\end{proof}

Since for any $\chi\in\bz$, one can find a $\chi'>d^2-3d$ such that $\chi'\equiv\chi~(d)$.  Together with Proposition \ref{fmtoh} and Remark \ref{chchi}, Theorem \ref{main3} below implies Theorem \ref{main1}.
\begin{thm}\label{main3}Let $d\geq 3$.  For any $n\geq 2$, on $S^{[n]}\times \ls$ we have
\begin{itemize}\item[(1)]$\gamma_1(2)$ is almost of perversity 1;

~~

\item[(2)]$\gamma_2(1)$ is almost of perversity 2;

~~

\item[(3)]$\gamma_3(0)$ is almost of perversity 3.

\end{itemize}
\end{thm}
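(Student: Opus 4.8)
The plan is to fix the common setting: all three classes $\gamma_1(2),\gamma_2(1),\gamma_3(0)$ lie in $A^2(S^{[n]}\times\ls)$, i.e. in $\Gamma^4$ with $m=4$, and each claimed perversity equals the lower index $k$. I would first make the filtration $\widetilde{P}_\bullet$ completely explicit by using the product structure $A^*(S^{[n]}\times\ls)\cong A^*(S^{[n]})\otimes\bq[\xi]/(\xi^{\ttb+1})$. Writing every class as a polynomial $\sum_a\alpha_a\xi^a$ with $\alpha_a\in A^*(S^{[n]})$, the operators $\xi^\ell$ and the subspaces $\Ker(\xi^\ell)$ become transparent ($\xi^\ell$ shifts the $\xi$-grading and truncates everything above $\xi^{\ttb}$), so membership $\gamma\in\widetilde{P}_k\Gamma^4$ turns into a finite list of vanishing statements for the $\xi$-coefficients of the product $\gamma\cdot P^{[n]}(\xi)$. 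In parallel I would expand each $\gamma_k(j)$ explicitly from (\ref{defga}), using $\text{ch}(\mi_n)=1-\text{ch}(\mo_{\Sigma_n})$ together with $\text{ch}_0(\mo_{\Sigma_n})=\text{ch}_1(\mo_{\Sigma_n})=0$, to write them in terms of $\xi$, the line bundle $H_{(n)}$, the Chern classes $P_i^{[n]}=c_i([L]^{[n]})$, and the $H^j$-integrals of $\text{ch}(\mo_{\Sigma_n})$. It is convenient to record at the same time the low classes $\gamma_0(2)=\xi$ and $\gamma_2(0)$ (the analogue of the relatively ample $c_2(0)$), which serve as anchors of perversity $0$ and $2$.

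For the upper bounds $\gamma_k(j)\in\widetilde{P}_k\Gamma^4$ I would argue constructively: exhibit an explicit decomposition $\gamma_k(j)\cdot P^{[n]}(\xi)=\sum_{i\ge1}\xi^{i-1}P^{[n]}(\xi)\,\eta_i$ with $\eta_i\in\Gamma$ chosen so that each summand is annihilated by $\xi^{\ttb+k+i-4}$. Because $P^{[n]}(\xi)$ is monic of degree $n$ in $\xi$ with lower coefficients the $P_i^{[n]}$, multiplying $\gamma_k(j)$ by it and re-collecting in powers of $\xi$ produces exactly the layered form required; the point is to check that after collecting, the top $\xi$-layers (those not automatically killed because $\xi^{\ttb+1}=0$) vanish, which reduces to identities among the $P_i^{[n]}$ and the integrated Chern classes of $\mo_{\Sigma_n}$.

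The heart of the matter is the induction on $n$ and the strict (lower) bounds. To run the induction I would pass to the incidence variety $S^{[n-1,n]}$ of nested pairs $Z'\subset Z$ of colength one, with projections $\psi$ to $S^{[n-1]}$, $\phi$ to $S^{[n]}$, and residual map $\rho$ to $S$; the recursion formulas of \S3.4 express $\phi^*P^{[n]}(\xi)$ as $\psi^*P^{[n-1]}(\xi)$ times an explicit factor linear in $\xi$ (involving $\rho^*H$), and similarly express $\phi^*\gamma_k(j)$ in terms of the $\gamma_{k'}(j')$ pulled back from $S^{[n-1]}$ plus correction terms supported on the residual point. Pulling the filtration condition back along $\phi$, applying the inductive hypothesis on $S^{[n-1]}\times\ls$, and pushing forward, one reduces the $S^{[n]}$ statement to the $S^{[n-1]}$ statement together with a bounded collection of correction terms; the base case $n=2$ is then a direct computation. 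For $\gamma_k(j)\notin\widetilde{P}_{k-1}\Gamma^4$ I would isolate a single ``leading'' $\xi$-coefficient of $\gamma_k(j)\cdot P^{[n]}(\xi)$ and show it cannot be absorbed into $\sum_{i\ge1}\Ker(\xi^{\ttb+(k-1)+i-4})\cap\big(\xi^{i-1}P^{[n]}(\xi)\Gamma\big)$; concretely this becomes the non-vanishing of a specific class in $A^*(S^{[n]})$ (or of a specific intersection number) that the induction must preserve.

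I expect the main obstacle to be precisely this strict step: one must track through the incidence-variety recursion a distinguished non-vanishing quantity and verify that the correction terms never lower the perversity, which forces delicate intersection computations on $S^{[n]}$. These are the computations I would quarantine into the two technical lemmas of the appendix---most likely the evaluation of certain $H^j$-integrals of products of $\text{ch}(\mo_{\Sigma_n})$ with powers of $P^{[n]}(\xi)$---while keeping the inductive bookkeeping and the reduction to $\xi$-coefficient vanishing in the main text.
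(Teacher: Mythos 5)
Your plan matches the paper's actual proof in all essentials: expand $\gamma_1(2),\gamma_2(1),\gamma_3(0)$ explicitly in the K\"unneth components $\theta_i^j(n)$ of $c_i(\mo_{\Sigma_n})$, reduce membership in $\widetilde{P}_k\Gamma^4$ to vanishing of $\xi$-coefficients of $\gamma\cdot P^{[n]}(\xi)$ in $A^*(S^{[n]})$, prove those identities by induction on $n$ through the incidence variety and the recursion for $P^{[n]}(\xi)$ and $\theta_i^j(n)$ (Proposition \ref{rrth}, using $\Omega\cap\Omega^{\bot}=0$ to descend), and obtain strictness from a distinguished nonvanishing class or intersection number (e.g.\ $\gamma_2(0)P_n^{[n]}\neq0$ via $\int_{S^{[n]}}\theta_2^1(n)^nP_n^{[n]}=(-1)^nd^n$) that the same induction preserves. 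The only deviations are cosmetic (you run the nested Hilbert scheme downward from $n$ rather than upward from $n$ to $n+1$, and the appendix lemmas are relations of the form $(\cdots)P_{n-1}^{[n]}=(\cdots)P_n^{[n]}$ rather than integral evaluations), so this is essentially the paper's argument.
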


Now Theorem \ref{main3} only concerns computations in $A^*(S^{[n]}\times\ls)$.  Our main strategy to prove Theorem \ref{main3} is to do induction on $n$. 

By K\"unneth formula we have 
\begin{equation}\label{Kde}H^k(S\times S^{[n]},\bz)\cong \bigoplus_{i+j=k}H^i(S,\bz)\otimes H^j(S^{[n]},\bz)\cong \bigoplus_{i+j=k}A^i(S)\otimes A^j(S^{[n]}).\end{equation}
Recall $A^*(S)=\bz \mathbf{1}\oplus\bz H\oplus \bz H^2$ with $H$ the hyperplane class.  Write
\begin{equation}\label{dese1}c_i(\mo_{\Sigma_n})=s_i(\mi_n)=\theta_i^2(n)\otimes \mathbf{1}+\theta_i^{1}(n)\otimes H+\theta_i^0(n)\otimes H^2,
\end{equation}
with $\theta_i^2(n)\in A^i(S^{[n]})$, $\theta_i^{1}(n)\in A^{i-1}(S^{[n]})$ and $\theta_i^0(n)\in A^{i-2}(S^{[n]})$\footnote{Here we use different superscripts from \cite{Yuan11}, in order to be consistent with \cite{KPS}}.  Since $\Sigma_n$ is of codimension 2 in $S\times S^{[n]}$, we have $\theta_1^{i}(n)=0$ for $i=0,1,2$, $\theta_0^{i}(n)=0$ for $i=0,1$ and $\theta_0^2(n)=[S^{[n]}]=\mathbf{1}$.
Also easy to see that $H_{(n)}=-\theta_2^1(n)$ and $\theta_2^0(n)=-n\cdot\mathbf{1}$.  

Denote also by $\theta_i^j(n)$ its pullback to $S^{[n]}\times\ls$.  By (\ref{defga}) and a direct computation we have 
\begin{eqnarray}\label{exfga}&&\gamma_0(0)=0,\quad\gamma_0(1)=d\cdot\mathbf{1},\quad\gamma_0(2)=\xi;\nonumber\\
&&\gamma_1(0)=\frac{d^2}2-n,\quad\gamma_1(1)=0,\quad \gamma_1(2)=\theta_2^2(n)-\frac1d\theta_2^1(n)\xi-\frac{\xi^2}2;\nonumber\\
&&\gamma_2(0)=\frac12\left(\left(d+\frac{2n}d\right)\theta_2^1(n)-\theta_3^0(n)\right), \gamma_2(1)=-\frac12\left(\frac1d\theta_2^1(n)^2+\theta_3^1(n)-2d\theta_2^2(n)\right),\nonumber\\ &&\gamma_3(0)=\frac16\left(\theta_4^0(n)+\frac3d\theta_2^1(n)\theta_3^0(n)-(\frac{3n}{d^2}+2)\theta_2^1(n)^2\right)+d\gamma_2(1)+\left(\frac n6-\frac{d^2}2\right)\theta_2^2(n).\nonumber\\
\end{eqnarray}

%%%%%%%%%%%%%%%%%%     Subsection 3.3     %%%%%%%%%%%%%%%%%%%%%%%

\subsection{The incidence variety for Hilbert schemes of points.} In this subsection, we review the set-up in \cite{EGL} (which in fact applies to $S$ being any projective complex surface).  %Let $S$ be any projective complex surface.  
%Let $\mi_n$ be the ideal sheaf of the universal family $\Sigma_n\subset S^{[n]}\times S$.

Define $\p(\mi_n):=Proj(Sym^*(\mi_n))$.  Then $\p(\mi_n)$ is isomorphic to the incidence variety $S^{[n,n+1]}$ parametrizing all pairs $(Z,Z')\in S^{[n]}\times S^{[n+1]}$ satisfying $Z\subset Z'$.  We have two classifying morphisms $\psi_n:\p(\mi_n)\ra S^{[n+1]},(Z,Z')\mapsto Z'$ and $\phi_n:\p(\mi_n)\ra S^{[n]},(Z,Z')\mapsto Z$.

It is known that $\p(\mi_n)$ is irreducible and smooth (see e.g. \cite{Che},\cite{Tik},\cite{ES}).

We have the projection $\sigma_n=(\phi_n,\rho_n):\p(\mi_n)\ra S^{[n]}\times S$ with tautological quotient line bundle $\mo_{\sigma_n}(1)=:\cl_n$.  Then $(\sigma_n)_{*}\cl_n\cong \mi_n$.  
We have the following commutative diagram
\begin{equation}\label{pcd}\xymatrix@C=1.2cm{\p(\mi_n)\cong S^{[n,n+1]}\ar[d]_{\psi_n\qquad}\ar[r]^{\qquad\sigma_n}\ar[rd]^{\phi_n}\ar@/^2pc/[rr]^{\rho_n}&S^{[n]}\times S\ar[d]^{p_n}\ar[r]^{\quad q_n}&S\\ S^{[n+1]}& S^{[n]}&}
\end{equation}

We use the same letter $\cl_n$ as the divisor class $c_1(\cl_n)$.  By Lemma 1.1 in \cite{EGL} we have 
\begin{equation}\label{psl}(\sigma_n)_*((-\cl_n)^i)=c_i(\mo_{\Sigma_n}).\end{equation}

%Let $K(X)$ be the Grothendieck group of coherent sheaves on $X$.  
Recall that for any $\alpha\in K(S)$ we have defined $\alpha^{[n]}\in K(S^{[n]})$ in (\ref{ghs}). 
\begin{lemma}[Lemma 2.1 in \cite{EGL}]\label{ind}The following relation holds in $K(\p(\mi_n))$
\[\psi_n^!\alpha^{[n+1]}=\phi_n^!\alpha^{[n]}+\cl_n\cdot\rho_n^!\alpha\]
for any $\alpha\in K(S)$.
\end{lemma}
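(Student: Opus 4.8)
The plan is to derive the relation from a single short exact sequence on $\p(\mi_n)\times S$ comparing the two universal subschemes pulled back along $\psi_n$ and $\phi_n$, and then to apply the pushforward that defines $(-)^{[n]}$. Since $\p(\mi_n)$, $S^{[n]}$, $S^{[n+1]}$ and $S$ are all smooth, coherent and locally free $K$-theory agree, so the pullbacks $\psi_n^!,\phi_n^!,\rho_n^!$ and the product $\cdot$ are the usual derived operations. Write $r:\p(\mi_n)\times S\to\p(\mi_n)$ and $q:\p(\mi_n)\times S\to S$ for the projections, and let $\Gamma\subset\p(\mi_n)\times S$ be the graph of $\rho_n$, so that $r|_\Gamma:\Gamma\xrightarrow{\sim}\p(\mi_n)$ and $q|_\Gamma=\rho_n\circ r|_\Gamma$. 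Set $\widetilde\Sigma_n:=(\phi_n\times\mathrm{id}_S)^{-1}\Sigma_n$ and $\widetilde\Sigma_{n+1}:=(\psi_n\times\mathrm{id}_S)^{-1}\Sigma_{n+1}$, the scheme-theoretic preimages of the universal families.

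The geometric heart of the argument, and the step I expect to be the main obstacle, is the fundamental exact sequence
\[0\to\cl_n\otimes\mo_\Gamma\to\mo_{\widetilde\Sigma_{n+1}}\to\mo_{\widetilde\Sigma_n}\to0\]
on $\p(\mi_n)\times S$, where $\cl_n\otimes\mo_\Gamma$ denotes $r^*\cl_n$ restricted to $\Gamma$. Over a point $(Z,Z')\in\p(\mi_n)$ one has $Z\subset Z'$ with $|Z'|=|Z|+1$, so $\widetilde\Sigma_n\subset\widetilde\Sigma_{n+1}$, and the kernel of the restriction surjection $\mo_{\widetilde\Sigma_{n+1}}\twoheadrightarrow\mo_{\widetilde\Sigma_n}$, namely $\mi_{\widetilde\Sigma_n}/\mi_{\widetilde\Sigma_{n+1}}$, is the length-one residual quotient supported at the extra point $\rho_n(Z,Z')$; hence it is scheme-theoretically supported on $\Gamma$. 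Identifying this residual quotient with the tautological quotient $\sigma_n^*\mi_n\twoheadrightarrow\cl_n$ of $\p(\mi_n)$ along $\Gamma$ yields the displayed sequence. Making this identification and pinning down the precise twist is exactly the content of the construction in \cite{EGL}, and it is the only genuinely delicate point; the flatness of $\Sigma_n$ over $S^{[n]}$ (resp. of $\Sigma_{n+1}$ over $S^{[n+1]}$) guarantees that the two preimages are Tor-independent base changes, so that $(\phi_n\times\mathrm{id}_S)^!\mo_{\Sigma_n}=\mo_{\widetilde\Sigma_n}$ and $(\psi_n\times\mathrm{id}_S)^!\mo_{\Sigma_{n+1}}=\mo_{\widetilde\Sigma_{n+1}}$ in $K$-theory.

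Granting the sequence, I would multiply it by $q^*\alpha$ and push forward by $Rr_*$. For the two outer terms I would invoke flat base change along the Cartesian square
\[\begin{array}{ccc}\p(\mi_n)\times S&\xrightarrow{\ \psi_n\times\mathrm{id}_S\ }&S^{[n+1]}\times S\\ \downarrow r&&\downarrow p_{n+1}\\ \p(\mi_n)&\xrightarrow{\quad\psi_n\quad}&S^{[n+1]}\end{array}\]
and its $\phi_n$-analogue, using that the projections $p_n,p_{n+1}$ are flat. Since $q=q_{n+1}\circ(\psi_n\times\mathrm{id}_S)=q_n\circ(\phi_n\times\mathrm{id}_S)$ and $(-)^{[m]}=R(p_m)_*(q_m^*(-)\cdot\mo_{\Sigma_m})$, this gives
\[Rr_*\big(q^*\alpha\cdot\mo_{\widetilde\Sigma_{n+1}}\big)=\psi_n^!\alpha^{[n+1]},\qquad Rr_*\big(q^*\alpha\cdot\mo_{\widetilde\Sigma_n}\big)=\phi_n^!\alpha^{[n]}.\]

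For the residual term, the projection formula together with the isomorphism $r|_\Gamma:\Gamma\xrightarrow{\sim}\p(\mi_n)$ and the identity $q|_\Gamma=\rho_n\circ r|_\Gamma$ gives
\[Rr_*\big(q^*\alpha\cdot(\cl_n\otimes\mo_\Gamma)\big)=\cl_n\cdot Rr_*\big(q^*\alpha\cdot\mo_\Gamma\big)=\cl_n\cdot\rho_n^!\alpha.\]
Finally, since $Rr_*$ is additive on $K$-theory and the fundamental sequence yields $[\mo_{\widetilde\Sigma_{n+1}}]=[\cl_n\otimes\mo_\Gamma]+[\mo_{\widetilde\Sigma_n}]$ after multiplication by $q^*\alpha$, adding the three identities produces
\[\psi_n^!\alpha^{[n+1]}=\phi_n^!\alpha^{[n]}+\cl_n\cdot\rho_n^!\alpha,\]
as claimed. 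The only nontrivial input is the fundamental sequence of the second paragraph; once it is in place, the remaining base-change and projection-formula manipulations are routine.
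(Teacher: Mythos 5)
Your argument is correct and is essentially the proof of Lemma 2.1 in \cite{EGL}, which the paper itself cites without reproving: the short exact sequence you isolate as the geometric heart is exactly the sequence (\ref{exid}) that the paper quotes from \cite{EGL} in \S 3.4 (your $\mo_\Gamma$ being $(\rho_n^S)^*\mo_{\Delta}$), and your Tor-independence, base-change and projection-formula manipulations around it are all sound. The one point you defer --- identifying the residual sheaf $\mi_{\widetilde\Sigma_n}/\mi_{\widetilde\Sigma_{n+1}}$ with $\cl_n\otimes\mo_\Gamma$ via the tautological quotient of $\sigma_n^*\mi_n$, rather than merely as a length-one sheaf supported on $\Gamma$ --- is indeed the only non-routine input, and both you and the paper ultimately take it from \cite{EGL}.
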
 

By Lemma \ref{ind} we have
%\begin{equation}\label{indse}s_t(\phi_n^!\alpha^{[n]})=s_t(\psi_n^!\alpha^{[n+1]})c_t(\cl_n\cdot\rho_n^!\alpha),\end{equation}
\begin{equation}c_t(\phi_n^!\alpha^{[n]})c_t(\cl_n\cdot\rho_n^!\alpha)=c_t(\psi_n^!\alpha^{[n+1]}).\end{equation}
In particular, let $\alpha=L$ then we have
\begin{equation}\label{pnrn}\psi_n^*(P^{[n+1]}_{n+1})=\phi_n^*(P^{[n]}_n)\cdot (\cl_n+\rho_n^*L),
\end{equation}
\begin{equation}\label{pnro}\psi_n^*(P^{[n+1]}_i)=\phi_n^*(P^{[n]}_i)+\phi_n^*(P^{[n]}_{i-1})\cdot (\cl_n+\rho_n^*L),\text{ for }i\leq n,
\end{equation}
and 
\begin{equation}\label{pnr}\psi_n^*(P^{[n+1]}(\xi))=\phi_n^*(P^{[n]}(\xi))\cdot(1+(\cl_n+\rho_n^*L)\xi).
\end{equation}

\subsection{Some recursion relations.}
We have a commutative diagram
\begin{equation}\label{pcd2}\xymatrix@C=2cm{\p(\mi_n)&\p(\mi_n)\times S\ar[l]_{p}\ar[d]_{\psi_n^S:=\psi_n\times id_S}\ar[r]^{\sigma_n^S:=\sigma_n\times id_S}\ar[rd]^{\phi_n^S:=\phi_n\times id_S}\ar@/^2pc/[rr]^{\rho_n^S:=\rho_n\times id_S}&S^{[n]}\times S\times S\ar[d]^{p_n^S:=p_n\times id_S}\ar[r]^{\quad q_n^S:=q_n\times id_S}&S\times S\\ &S^{[n+1]}\times S& S^{[n]}\times S&}
\end{equation}

Denote by $\Delta\subset S\times S$ the diagonal.  By \cite{EGL} we have
\begin{equation}\label{exid1}0\ra (\psi^S_n)^*\mi_{n+1}\ra(\phi_n^S)^*\mi_n\ra p^*\cl_n\otimes (\rho_n^S)^*\mo_{\Delta}\ra 0.\end{equation}
\begin{equation}\label{exid}0\ra p^*\cl_n\otimes (\rho_n^S)^*\mo_{\Delta}\ra (\psi^S_n)^*\mo_{\Sigma_{n+1}}\ra(\phi_n^S)^*\mo_{\Sigma_n} \ra 0.\end{equation}
We also have the following lemma proved in \cite{Yuan11}.

\begin{lemma}[Lemma 2.2 in \cite{Yuan11}]\label{flat}(1) The map $\rho_n^S$ is flat.

(2) $(\psi^S_n)^*\mi_{n+1}\cong (\psi^S_n)^!\mi_{n+1}$ and $(\phi_n^S)^*\mi_n\cong (\phi_n^S)^!\mi_n$.
\end{lemma}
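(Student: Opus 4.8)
The two assertions are of different natures, so I would treat them separately: statement (1) follows from a fibre--dimension count together with the smoothness of $\p(\mi_n)$, while statement (2) follows from the Cohen--Macaulayness of the universal subschemes $\Sigma_n,\Sigma_{n+1}$.

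For (1) the plan is to apply the miracle flatness criterion (a Cohen--Macaulay source over a regular base is flat once all fibres have the expected dimension) to the residual--point map $\rho_n\colon\p(\mi_n)\ra S$. First I would reduce $\rho_n^S=\rho_n\times \text{id}_S$ to $\rho_n$ itself: the square exhibiting $\p(\mi_n)\times S$ over $S\times S$ identifies $\rho_n^S$ with the base change of $\rho_n$ along the first projection $S\times S\ra S$, and flatness is stable under base change, so it suffices to show $\rho_n$ is flat. Now $\p(\mi_n)\cong S^{[n,n+1]}$ is smooth and irreducible of dimension $2n+2$ (recalled above) and $S=\p^2$ is smooth of dimension $2$, while $\rho_n$ is surjective, so its generic fibre has dimension $2n=\dim\p(\mi_n)-\dim S$. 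The key input is equidimensionality of \emph{all} fibres: since $PGL_3$ acts transitively on $S=\p^2$ and lifts compatibly to $S^{[n]},S^{[n+1]}$ and $\p(\mi_n)$, making $\rho_n$ equivariant, all fibres of $\rho_n$ are isomorphic, hence each has dimension exactly $2n$. With a Cohen--Macaulay (indeed smooth) source and a regular target, miracle flatness then gives that $\rho_n$, and therefore $\rho_n^S$, is flat.

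For (2) the content is that the ordinary pullbacks of the ideal sheaves acquire no higher derived terms, i.e.\ $L^ig^*\mi=0$ for $i\ge 1$ for $(g,\mi)\in\{(\psi_n^S,\mi_{n+1}),(\phi_n^S,\mi_n)\}$. My plan is to exploit that the universal subscheme $\Sigma_n\subset S^{[n]}\times S$ is Cohen--Macaulay of pure codimension $2$: the projection $\Sigma_n\ra S^{[n]}$ is finite and flat of degree $n$ over the smooth base $S^{[n]}$, so $\Sigma_n$ is Cohen--Macaulay and equidimensional of dimension $2n$, hence of codimension $2$ at every point of the smooth ambient $S^{[n]}\times S$. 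By Auslander--Buchsbaum this forces $\mo_{\Sigma_n}$ to have projective dimension $2$, equivalently $\mi_n$ to have projective dimension $\le 1$; the identical argument applies to $\Sigma_{n+1}$ and $\mi_{n+1}$. Then for either map $g$ and the corresponding ideal $\mi$ I would take a local two--term locally free resolution $0\ra E_1\xrightarrow{A}E_0\ra\mi\ra 0$. Derived pullback is computed by $g^*E_1\xrightarrow{g^*A}g^*E_0$, so $L^ig^*\mi=0$ for $i\ge 2$ automatically, and $L^1g^*\mi=\Ker(g^*A)$. Since $\mi$ is invertible away from $\Sigma$ (where $\Sigma=\Sigma_n$ or $\Sigma_{n+1}$), the sheaf $\Ker(g^*A)$ is supported on $g^{-1}(\Sigma)$; as both $\phi_n^S$ and $\psi_n^S$ are surjective onto their targets, $g^{-1}(\Sigma)$ is a proper closed subset of the integral scheme $\p(\mi_n)\times S$. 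But $\Ker(g^*A)$ is a subsheaf of the locally free sheaf $g^*E_1$, hence torsion--free, and a nonzero torsion--free sheaf on an integral scheme has full support; therefore $\Ker(g^*A)=0$. This yields $g^*\mi\cong g^!\mi$, which is precisely (2).

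I expect the proof to contain no genuinely new idea: the only substantial inputs are the two structural facts I am importing, namely the smoothness and dimension of $\p(\mi_n)$ (so that miracle flatness applies and the generic fibre of $\rho_n$ has the expected dimension) and the Cohen--Macaulayness of the universal subscheme (so that $\mi_n$ has projective dimension $\le1$), both standard for Hilbert schemes of points on a smooth surface. The one place requiring care is the equidimensionality of the fibres of $\rho_n$, which for $S=\p^2$ I settle by $PGL_3$-equivariance; for a general surface one would instead bound $\dim\rho_n^{-1}(s)$ directly through the fibres of $\sigma_n$ over $\Sigma_n\cap(S^{[n]}\times\{s\})$. As an independent check on (2), and an alternative route, one can instead feed the flatness of $\rho_n^S$ from (1) into the exact sequences (\ref{exid1}) and (\ref{exid}): flatness makes $(\rho_n^S)^*\mo_{\Delta}$ free of higher Tor, and these two sequences then transport the $*=\,^!$ property between $(\phi_n^S)^*\mi_n$ and $(\psi_n^S)^*\mi_{n+1}$.
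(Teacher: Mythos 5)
The paper gives no proof of this lemma at all: it is imported verbatim as Lemma 2.2 of \cite{Yuan11}, so there is nothing in-paper to compare your argument against. On its own terms your proof is correct and self-contained. For (1), the reduction of $\rho_n^S$ to $\rho_n$ by base change along the first projection is right, and the combination of (i) smoothness and irreducibility of $\p(\mi_n)\cong S^{[n,n+1]}$ of dimension $2n+2$, (ii) $PGL_3$-equivariance forcing all fibres of $\rho_n$ to be isomorphic and hence of the generic dimension $2n$, and (iii) miracle flatness over the regular base $S$, is a clean and complete argument; as you note, the equivariance step is the only place where $S=\p^2$ is used, and for a general surface one would have to bound the fibre dimensions by hand. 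For (2), the chain ``$\Sigma_n$ finite flat over $S^{[n]}$ $\Rightarrow$ $\Sigma_n$ Cohen--Macaulay of codimension $2$ $\Rightarrow$ $\operatorname{pd}\mi_n\le 1$ by Auslander--Buchsbaum, then $L^1g^*\mi=\Ker(g^*A)$ is a torsion-free sheaf supported on the proper closed subset $g^{-1}(\Sigma)$ of the integral scheme $\p(\mi_n)\times S$, hence zero'' is exactly the standard way to get the Tor-vanishing, and every step checks out (surjectivity of $\phi_n^S$ and $\psi_n^S$ guarantees $g^{-1}(\Sigma)$ is proper). The one caveat is your proposed ``alternative route'' to (2) via the sequences (\ref{exid1}) and (\ref{exid}): in \cite{EGL} those sequences and the identification of $*$-pullback with $!$-pullback are established together, so using them to rederive (2) risks circularity; since you offer this only as a cross-check and your primary argument is independent of it, this does not affect the validity of the proof.
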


By (\ref{exid}) and Lemma \ref{flat} we have
%\begin{equation}s_t((\psi_n^S)^!\mi_{n+1})=s_t((\phi_n^S)^!\mi_{n})c_t(\cl_n\cdot(\rho_n^S)^!\mo_{\Delta}),\end{equation}
\begin{equation}\label{indin}c_t((\psi_n^S)^!\mo_{\Sigma_{n+1}})=c_t((\phi_n^S)^!\mo_{\Sigma_n})c_t(\cl_n\cdot(\rho_n^S)^!\mo_{\Delta}).\end{equation}

%By K\"unneth formula we have 
%\begin{equation}\label{Kde}H^k(S\times S^{[n]},\bz)\cong \bigoplus_{i+j=k}H^i(S,\bz)\otimes H^j(S^{[n]},\bz)\cong \bigoplus_{i+j=k}A^i(S)\otimes A^j(S^{[n]}).\end{equation}
%Recall $A^*(S)=\bz \mathbf{1}\oplus\bz H\oplus \bz H^2$ with $H$ the hyperplane class.  
Recall we have 
\begin{equation}\label{dese}c_i(\mo_{\Sigma_n})=\theta_i^2(n)\otimes \mathbf{1}+\theta_i^{1}(n)\otimes H+\theta_i^0(n)\otimes H^2.
\end{equation}
%with $\theta_i^2(n)\in A^i(S^{[n]})$, $\theta_i^{1}(n)\in A^{i-1}(S^{[n]})$ and $\theta_i^0(n)\in A^{i-2}(S^{[n]})$\footnote{Here we use different superscripts from \cite{Yuan11}, in order to be consistent with \cite{KPS}}.  Since $\Sigma_n$ is of codimension 2 in $S\times S^{[n]}$, we have $\theta_1^{i}(n)=0$ for $i=0,1,2$, $\theta_0^{i}(n)=0$ for $i=0,1$ and $\theta_0^2(n)=[S^{[n]}]=\mathbf{1}$.

We make the convention that $\theta_i^j(n)=0$ and $\cl_n^i=0$ for $i<0$.  It is well-known that $A^*(S^{[n]})\cong H^*(S^{[n]},\bq)$ is generated by $\{\theta_i^j(n)\}_{i,j}$ as a ring (see e.g. \cite{mark}).

%
%Let $\theta_i^j(n)$ be as in (\ref{dese}).  
Define
\begin{eqnarray}\label{defabc}A_i(n)&:=&\sum_{k= 0}^{i-2}\phi_n^*\theta_{i-k-2}^2(n)(-\cl_n)^{k}(k+1)\nonumber\\
B_i(n)&:=&3\sum_{k=0}^{i-3}\phi_n^*\theta_{i-k-3}^2(n)(-\cl_n)^{k}\binom{k+2}{2}\nonumber\\ &&+\sum_{k=2}^{i-2}\phi_n^*\theta_{i-k-2}^1(n)(-\cl_n)^{k}(k+1)\nonumber\\
C_i(n)&:=&6\sum_{k=0}^{i-4}\phi_n^*\theta_{i-k-4}^2(n)(-\cl_n)^{k}\binom{k+3}{3}\nonumber\\ &&+3\sum_{k=2}^{i-3}\phi_n^*\theta_{i-k-3}^1(n)(-\cl_n)^{k}\binom{k+2}{2}\nonumber\\ &&+\sum_{k=2}^{i-2}\phi_n^*\theta_{i-k-2}^0(n)(-\cl_n)^{k}(k+1).\end{eqnarray}

We have the following proposition.
\begin{prop}\label{rrth}For any $n\geq 1$, we have
\begin{eqnarray}\psi_n^*\theta_i^2(n+1)&=&\phi_n^*\theta_i^2(n)-A_i(n)\rho_n^*H^2,\nonumber\\
\psi_n^*\theta_i^1(n+1)&=&\phi_n^*\theta_i^1(n)-A_i(n) \rho_n^*H-B_i(n)\rho_n^*H^2,\nonumber\\
\psi_n^*\theta_i^0(n+1)&=&\phi_n^*\theta_i^0(n)-A_i(n)-B_i(n) \rho_n^*H-C_i(n)\rho_n^*H^2.\nonumber
\end{eqnarray}
\end{prop}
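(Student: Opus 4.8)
The plan is to turn the geometry of the incidence correspondence into a single multiplicative Chern-class identity and then read off the three formulas by a double Künneth-type bookkeeping.

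First I would start from the exact sequence (\ref{exid}) on $\p(\mi_n)\times S$, namely $0\ra p^*\cl_n\otimes(\rho_n^S)^*\mo_\Delta\ra(\psi_n^S)^*\mo_{\Sigma_{n+1}}\ra(\phi_n^S)^*\mo_{\Sigma_n}\ra0$, which by multiplicativity of the total Chern class gives (\ref{indin}). Using Lemma \ref{flat} I would replace every $!$-pullback by an ordinary $*$-pullback, so that $c_t((\psi_n^S)^*\mo_{\Sigma_{n+1}})=(\phi_n^S)^*c_t(\mo_{\Sigma_n})\cdot c_t(\cl_n\cdot(\rho_n^S)^*\mo_\Delta)$. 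Because $\psi_n^S=\psi_n\times\mathrm{id}_S$ and $\phi_n^S=\phi_n\times\mathrm{id}_S$ act as the identity on the last $S$-factor, expanding both $c_i(\mo_{\Sigma_{n+1}})$ and $c_i(\mo_{\Sigma_n})$ through (\ref{dese}) turns the three desired identities into the statement that the coefficients of $H^0,H^1,H^2$ (powers of the last-factor hyperplane) on the right-hand side reproduce $\psi_n^*\theta_i^2(n+1),\psi_n^*\theta_i^1(n+1),\psi_n^*\theta_i^0(n+1)$ respectively. Thus everything reduces to computing the single class $c_t(\cl_n\cdot(\rho_n^S)^*\mo_\Delta)$ and convolving it with $(\phi_n^S)^*c_\bullet(\mo_{\Sigma_n})$.

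The heart is the explicit computation of $c_t(\cl_n\cdot(\rho_n^S)^*\mo_\Delta)$. Writing $p_1,p_2$ for the two projections of $S\times S=\p^2\times\p^2$ and $a,b$ for the respective hyperplane pullbacks, I would use that $\Delta$ is the zero locus of a regular section of the rank-$2$ bundle $\me:=p_1^*\mo(1)\otimes p_2^*T_{\p^2}(-1)$, whose Chern classes are $c_1(\me)=2a+b$ and $c_2(\me)=a^2+ab+b^2=[\Delta]$; the Koszul complex then gives $[\mo_\Delta]=1-\me^\vee+\det\me^\vee$ in $K$-theory. Since $\rho_n^S$ is flat (Lemma \ref{flat}) the pulled-back section stays regular, so the same $K$-theory expression holds after pullback; tensoring by $\cl_n$ and writing $h:=\rho_n^*H$ (so $a\mapsto h$), $H$ for the last factor (so $b\mapsto H$), and $\cl_n$ for $c_1(\cl_n)$, multiplicativity yields the closed form $c_t(\cl_n\cdot(\rho_n^S)^*\mo_\Delta)=\dfrac{(1+\cl_n t)\bigl(1+(\cl_n-2h-H)t\bigr)}{w^2-Htw+H^2t^2}$ with $w=1+(\cl_n-h)t$, where I have already used $\omega_1+\omega_2=1,\ \omega_1\omega_2=1$ for the Chern roots of $T_{\p^2}(-1)$ to simplify the denominator.

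Finally I would expand this rational function in two stages. First expand in the last-factor class $H$ (with $H^3=0$); since $w^2-Htw+H^2t^2$ inverts to $w^{-2}+Ht\,w^{-3}$ modulo $H^3$, one reads off the three Künneth pieces $c_t^{(0)},c_t^{(1)},c_t^{(2)}$ as explicit rational functions in $\cl_n$ and $h$. Then expand each in $h$ (with $h^3=0$) via $(W-ht)^{-m}=\sum_{j}\binom{m+j-1}{j}h^jt^j/W^{m+j}$, $W=1+\cl_n t$: this is precisely the source of the coefficients $k+1=\binom{k+1}{1}$, $\binom{k+2}{2}$, $\binom{k+3}{3}$ and of the integers $3=\binom{3}{2}$, $6=\binom{4}{2}$ appearing in $A_i(n),B_i(n),C_i(n)$. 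Matching the coefficient of $t^i$, separating the powers of $H$ to obtain the three equations and the powers of $h$ to peel off the explicit $\rho_n^*H,\rho_n^*H^2$ factors, and using $\theta_0^2(n)=\mathbf1$ and $\theta_1^\bullet(n)=0$, the convolution sums reassemble exactly into the stated recursions. The main obstacle is the bookkeeping in this last step: the double expansion produces many terms, and one must check that, after convolving with $(\phi_n^S)^*c_\bullet(\mo_{\Sigma_n})$, all $h$-dependence either cancels or factors out cleanly as the displayed $\rho_n^*H,\rho_n^*H^2$ (crucially, $h^2(\cl_n-h)^{k}=h^2\cl_n^{k}$ because $h^3=0$), leaving $A_i,B_i,C_i$ expressed purely through $\phi_n^*\theta_\bullet^\bullet(n)$ and $\cl_n$. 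I have verified the $H^0$-component (which yields $A_i$) and the $h^2$-part of the $H^1$-component (which yields the factor-$3$ sum of $B_i$) to confirm the mechanism; the remaining components follow by the identical computation.
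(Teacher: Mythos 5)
Your argument is correct and is essentially the paper's own proof: both rest on the exact sequence (\ref{exid}) and the resulting multiplicativity (\ref{indin}), a length-two resolution of $\mo_{\Delta}$ to obtain its Chern polynomial, and the double expansion in the last-factor class $H$ (with $H^3=0$) and in $\rho_n^*H$ (with $(\rho_n^*H)^3=0$) that produces the binomial coefficients in $A_i(n),B_i(n),C_i(n)$; the only cosmetic difference is that you realize $[\mo_{\Delta}]$ via the Koszul complex of the section of $p_1^*\mo(1)\otimes p_2^*T_{\p^2}(-1)$ cutting out the diagonal, while the paper cites the Beilinson spectral sequence, and these give the same class in $K(S\times S)$ (your ``$\omega_1+\omega_2=1$, $\omega_1\omega_2=1$'' is an obvious slip for $H$, $H^2$, since your displayed denominator $w^2-Htw+H^2t^2$ is the correct one). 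It is worth recording that your closed form for $c_t(\mo_{\Delta})$ is the correct one --- the paper's (\ref{cpdel}) as printed would give $c_1(\mo_{\Delta})=p_2^*H\neq 0$ and needs a coefficient fixed in the numerator --- and that your expansion produces the $\theta^1$- and $\theta^0$-sums in $B_i(n)$ and $C_i(n)$ starting at $k=0$ rather than the $k=2$ printed in (\ref{defabc}); the $k=0$ version is what the paper actually uses later (e.g.\ the $\theta_2^1(n)$ term in $B_4(n)$ and the $-n$ in $C_4(n)$ of (\ref{abc234}) come precisely from $k=0$), so your computation confirms the intended statement.
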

\begin{proof}The proof is the same as that of Proposition 3.1 in \cite{Yuan11}. (Be careful we define $\theta_i^j(n)$ differently from \cite{Yuan11}.)  
%By Beilinson spectral sequence we have on $S\times S$
%\begin{equation}\label{bss}0\ra \begin{array}{c}p_1^*\mo(-1)\\ \otimes \\p_2^*\mo(-2)\end{array}\ra 
%\begin{array}{c}p_1^*\mt_S(-2)\\ \otimes \\p_2^*\mo(-1)\end{array}\ra\mo_{S\times S}\ra\mo_{\Delta}\ra 0,\end{equation}
%where $p_i:S\times S\ra S$ is the $i$-th projection, $\mt_S$ is the tangent bundle over $S$ and $\mo(-d):=\mo_S(-dH)$. 

By Beilinson spectral sequence we can compute the Chern polynomial $c_t(\mo_{\Delta})$ as follows.
\begin{equation}\label{cpdel}c_t(\mo_{\Delta})=\frac{1\cdot(1-p^*_1Ht-p_2^*Ht)}{(1-\gamma_1t-p_2^*Ht)(1-\gamma_2t-p_2^*Ht)},\end{equation}
where $p_i:S\times S\ra S$ is the $i$-th projection, $\gamma_1+\gamma_2=p_1^*H$ and $\gamma_1\cdot\gamma_2=p_1^*H^2$.

%Define $H_i=(\rho_n^S)^*p_i^*H, i=1,2$.  Then $H_i^2=(\rho_n^S)^*p_i^*x$ and $H_i^k=0$ for $k\geq 3$.  
%By (\ref{indin}) we have
%\begin{eqnarray}&&(\psi_n^S)^*s_t(\mi_{n+1})\nonumber\\ &=&(\phi_n^S)^*s_t(\mi_n)\cdot c_t(\cl_n\cdot (\rho_n^S)^!\mo_{\Delta})\nonumber \\
%&=&(\phi_n^S)^*s_t(\mi_n)\cdot\frac{(1+\ell t)(1+\ell_n t-H_1t-H_2t)}{(1+\ell_n t-(\rho_n^S)^*\gamma_1t-H_2t)(1+\ell_n t-(\rho_n^S)^*\gamma_2t-H_2t)}\nonumber\\
%&=&(\phi_n^S)^*s_t(\mi_n)\nonumber\\&+&\frac{(\phi_n^S)^*s_t(\mi_n)(H_1^2+H_2^2+H_1H_2)t^2}{(1-H_1t-2H_2t+\ell_n t+\ell_n^2t^2-\ell_n(H_1+H_2)t^2+(H_1^2+H_2^2+H_1H_2)t^2}.\nonumber\\\end{eqnarray}
%On the other hand we have K\"unneth decompositons
%\[(\psi_n^S)^*s_i(\mi_{n+1})=\psi_n^*\theta^0_i(n+1)\otimes \mathbf{1}+\psi_n^*\theta^1_i(n+1)\otimes H_2+\psi_n^*\theta_i^2(n+1)\otimes H_2^2;\]
%\[(\phi_n^S)^*s_i(\mi_{n})=\phi_n^*\theta^0_i(n)\otimes \mathbf{1}+\phi_n^*\theta^1_i(n)\otimes H_2+\phi_n^*\theta_i^2(n)\otimes H_2^2.\]
%Since $p_1\circ\rho_n^S=\rho_n\circ p$, we also have for any $i,j,k\geq 0$,
%$$\phi_n^*\theta_i^j(n)\cdot H_1^k=\sigma_n^*(\theta_i^j(n)\otimes q_n^*H^k)=\phi_n^*(\theta_i^j(n))\cdot \rho_n^* H^k.$$  
The proposition follows from  (\ref{indin}) and a direct computation.
\end{proof}

Define $\Omega:=\psi^*_n(H^*(S^{[n+1]},\bq))=\psi^*_n(A^*(S^{[n+1]}))\subset H^*(\p(\mi_n),\bq)$.  Let 
$$\Omega^{\bot}:=\{x\in H^*(\p(\mi_n),\bq)\big|\forall ~y\in\Omega, \int_{\p(\mi_n)}x\cdot y=0\}.$$
Since $\psi_n^*$ is surjective and generically of degree $n+1$, $\Omega\cap\Omega^{\bot}=\{0\}.$
\begin{lemma}\label{omep}For any $x\in H^*(\p(\mi_n),\bq)$, if $(\phi_n)_*(x\cdot \rho_n^*H^{j}\cdot\cl_n^{i})=0$ for all $i,j\in\bz_{\geq0}$, then $x\in \Omega^{\bot}$.  In particular for any $x\in \Omega$, we have 
\[x=0\Leftrightarrow (\phi_n)_*(x\cdot \rho_n^*H^{j}\cdot\cl_n^{i})=0\text{ for all }i,j\in\bz_{\geq0}.\]
\end{lemma}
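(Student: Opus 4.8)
The plan is to reduce the pairing of $x$ against $\Omega$ to the pushforwards $(\phi_n)_*(x\cdot\rho_n^*H^j\cdot\cl_n^i)$ by means of the projection formula for $\phi_n$. The key input is that every class in $\Omega$ can be written using only $\phi_n^*A^*(S^{[n]})$, the tautological class $\cl_n$, and $\rho_n^*H$, so that pairing against such classes is controlled by exactly the pushforwards appearing in the hypothesis.

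First I would show that $\Omega$ is contained in the subring $R\subset H^*(\p(\mi_n),\bq)$ generated by $\phi_n^*A^*(S^{[n]})$, $\cl_n$ and $\rho_n^*H$. Since $A^*(S^{[n+1]})$ is generated as a ring by the classes $\theta_i^j(n+1)$, the subring $\Omega=\psi_n^*(A^*(S^{[n+1]}))$ is generated by the pullbacks $\psi_n^*\theta_i^j(n+1)$. Proposition \ref{rrth} expresses each $\psi_n^*\theta_i^j(n+1)$ as a polynomial in $\phi_n^*\theta_i^j(n)$, $\cl_n$ and $\rho_n^*H$, since the classes $A_i(n),B_i(n),C_i(n)$ all lie in $R$. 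Hence $\Omega\subset R$.

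Next, given any $y\in\Omega$, I would expand it as a finite $\bq$-linear combination of monomials $\phi_n^*\alpha_{ij}\cdot\cl_n^i\cdot\rho_n^*H^j$ with $\alpha_{ij}\in A^*(S^{[n]})$; here one uses that products of the $\phi_n^*\theta_i^j(n)$ are again of the form $\phi_n^*(\text{class on }S^{[n]})$, and that $\rho_n^*H^j=0$ for $j\geq 3$ because $H^3=0$ on $S=\p^2$, so only finitely many monomials occur. Applying the projection formula for the proper morphism $\phi_n$,
\[\int_{\p(\mi_n)}x\cdot y=\sum_{i,j}\int_{S^{[n]}}\alpha_{ij}\cdot(\phi_n)_*\left(x\cdot\cl_n^i\cdot\rho_n^*H^j\right),\]
and the hypothesis $(\phi_n)_*(x\cdot\rho_n^*H^j\cdot\cl_n^i)=0$ forces every summand to vanish. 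Thus $\int_{\p(\mi_n)}x\cdot y=0$ for all $y\in\Omega$, i.e. $x\in\Omega^{\bot}$, which is the first assertion.

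Finally, for the displayed equivalence: the implication $x=0\Rightarrow$ vanishing of all pushforwards is immediate. Conversely, if $x\in\Omega$ and all the pushforwards vanish, the first part gives $x\in\Omega^{\bot}$, whence $x\in\Omega\cap\Omega^{\bot}=\{0\}$ by the non-degeneracy noted just before the lemma (concretely, the intersection pairing restricted to $\Omega$ equals $(n+1)$ times the Poincar\'e pairing on $S^{[n+1]}$, and is therefore non-degenerate). I expect the only genuine content to be the first step, namely certifying via Proposition \ref{rrth} that $\Omega$ lands inside the subring $R$; once that is in place, the remaining steps are formal applications of the projection formula and Poincar\'e duality.
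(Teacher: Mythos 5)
Your proposal is correct and follows essentially the same route as the paper: both use Proposition \ref{rrth} to show that $\psi_n^*$ of any polynomial in the $\theta_i^j(n+1)$ lies in the subring generated by $\phi_n^*A^*(S^{[n]})$, $\cl_n$ and $\rho_n^*H$, then apply the projection formula for $\phi_n$ to reduce the pairing against $\Omega$ to the pushforwards in the hypothesis, and conclude via $\Omega\cap\Omega^{\bot}=\{0\}$. The only difference is cosmetic: you spell out the non-degeneracy of the pairing on $\Omega$ via the degree-$(n+1)$ computation, which the paper records in the sentence immediately preceding the lemma.
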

\begin{proof}By definition, $x\in \Omega^{\bot}$ iff for any polynomial $F$ in $\theta_{i}^j(n+1),i,j\in\bz_{\geq 0}$, we have $\int_{\p(\mi_n)}x\cdot\psi_n^*F=0$.  We also have
\[\int_{\p(\mi_n)}x\cdot\psi_n^*F=\int_{S^{[n]}}(\phi_n)_*(x\cdot\psi_n^*F).\] 
On the other hand, by Proposition \ref{rrth} $\psi_n^*F$ equals to a polynomial $G$ with variables $\phi_n^*\theta_i^j(n),i,j\in\bz_{\geq 0} $, $\rho_n^*H$ and $\cl_n$.  Therefore there are finitely many polynomials $G_{s,t}$ in $\theta_i^j(n),i,j\in\bz_{\geq 0} $ such that 
\[G=\sum_{s,t\geq 0}\phi_n^*(G_{s,t})\cdot(\rho_n^*H)^s\cdot\cl_n^{t}.\] 
Therefore
\begin{eqnarray}\int_{\p(\mi_n)}x\cdot\psi_n^*F&=&\int_{S^{[n]}}(\phi_n)_*(x\cdot\psi_n^*F)
=\int_{S^{[n]}}(\phi_n)_*(x\cdot G)\nonumber\\ &=&\sum_{s,t\geq 0}\int_{S^{[n]}}G_{s,t}\cdot(\phi_n)_*(x\cdot(\rho_n^*H)^s\cdot\cl_n^t).
\end{eqnarray}
The lemma follows.
\end{proof}

%%%%%%%%%%%.     Section. 4   %%%%%%%%%%%%
\section{Proof of the main theorem.}
We prove Theorem \ref{main3} in this section.  In this section and also appendix we will use the same letter for a class and its pullback if there is no confusion.  

Recall that
\[P^{[n]}(\xi):=P_n^{[n]}+P_{n-1}^{[n]}\xi+\cdots+P_1^{[n]}\xi^{n-1}+\xi^{n}\in A^n(S^{[n]}\times \ls),\]
with $P_i^{[n]}$ (the pullback to $S^{[n]}\times\ls$ of) the $i$-th Chern class of the rank $n$ vector bundle $[dH]^{[n]}$ on $S^{[n]}$ and
\begin{equation}\label{alpf1}\widetilde{P}_k\Gamma^m:=\left\{\gamma\in\Gamma^m\left| \gamma\cdot P^{[n]}(\xi)\in \sum_{i\geq 1}\Ker(\xi^{\ttb+k+i-m})\cap \big(\xi^{i-1}\cdot P^{[n]}(\xi)\cdot\Gamma\big)\right.\right\}.\end{equation}
We say a $\gamma\in \Gamma$ is \emph{almost of perversity $k$} if $\gamma\in \widetilde{P}_k\Gamma\setminus \widetilde{P}_{k-1}\Gamma$. Notice that $P_i^{[n]}$, $\Gamma$, $\widetilde{P}_{\bullet}$ all depend on $d$ and from now on we always assume $d\geq 3$.

Let $A_i(n),B_i(n),C_i(n)$ be as in (\ref{defabc}).
\subsection{The computation for $\gamma_1(2)$.} %%%%%%. Subsection 4.1  %%%%%%%%%%%%%% 
By (\ref{exfga}) we have 
$$\gamma_1(2)=\theta_2^2(n)-\frac1d\theta_2^1(n)\xi-\frac{\xi^2}2\in H^4(S^{[n]}\times\ls).$$ 
 
\begin{lemma}\label{pnt2}In $H^*(S^{[n]},\bq)$ we have for all $i\geq 1$
\begin{enumerate}
\item $\theta_i^2(n)P_n^{[n]}=0$;

~~

\item $\theta_i^2(n)P_{n-1}^{[n]}-\frac1d\theta_i^1(n)P_n^{[n]}=0$;

~~

\item $\theta_i^2(n)P_{n-2}^{[n]}-\frac1d\theta_i^1(n)P_{n-1}^{[n]}+\frac1{d^2}\theta_i^0(n)P_n^{[n]}=0$.
\end{enumerate}
Moreover in $H^*(\p(\mi_n),\bq)$ we have
\begin{enumerate}
\item[(4)] $H^2\cl_n P_n^{[n]}=0$;

~~

\item[(5)] $H^2\cl_n P_{n-1}^{[n]}-\frac1dH\cl_n P_n^{[n]}=0$;

~~~

\item[(6)] $H^2\cl_n P_{n-2}^{[n]}-\frac1dH\cl_n P_{n-1}^{[n]}+\frac1{d^2} \cl_n P_n^{[n]}=0$.
\end{enumerate}
\end{lemma}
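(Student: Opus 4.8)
My plan is to prove the six identities by first establishing the three relations (4)--(6) on the incidence variety $\p(\mi_n)$, and then deducing (1)--(3) on $S^{[n]}$ from them by a pushforward along $\phi_n$. The starting point is a dictionary relating the $\theta$-classes to pushforwards from $\p(\mi_n)$. From (\ref{psl}) we have $(\sigma_n)_*((-\cl_n)^i)=c_i(\mo_{\Sigma_n})$; combining this with the K\"unneth decomposition (\ref{dese1}), the projection formula for $\sigma_n=(\phi_n,\rho_n)$, and the relations $\int_S H^2=1$, $H^3=0$ on $S=\p^2$, one checks
\[(\phi_n)_*\big((-\cl_n)^i\cdot\rho_n^*H^j\big)=\theta_i^j(n),\qquad j=0,1,2,\]
since $\rho_n^*H^j=\sigma_n^*q_n^*H^j$ and pushing forward by $p_n$ picks out the correct K\"unneth factor of $c_i(\mo_{\Sigma_n})\cdot q_n^*H^j$.

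Next I would prove (4)--(6). The key geometric input is that $\sigma_n\colon\p(\mi_n)\to S^{[n]}\times S$ is an isomorphism over the complement $U$ of $\Sigma_n$, and that $\cl_n$ is trivial there because $\mi_n|_U\cong\mo_U$. Hence $c_1(\cl_n)$ restricts to $0$ in $A^1(U)$, so by the localization (excision) sequence for Chow groups of the smooth variety $\p(\mi_n)$ it is represented by a divisor supported on $E:=\sigma_n^{-1}(\Sigma_n)$. Consequently, for any $\beta\in A^*(S^{[n]}\times S)$,
\[\cl_n\cdot\sigma_n^*\beta=0\quad\text{whenever}\quad \beta|_{\Sigma_n}=0,\]
because each component of $c_1(\cl_n)$ lies in $E$, so $c_1(\cl_n)\cdot\sigma_n^*\beta=\iota_*\big((\sigma_n|_E)^*(\beta|_{\Sigma_n})\big)$ and $\sigma_n|_E$ factors through $\Sigma_n$.

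It then suffices to compute the restrictions $P_k^{[n]}|_{\Sigma_n}$, since the three combinations appearing in (4), (5), (6) are exactly $\cl_n\cdot\sigma_n^*\beta$ for classes $\beta$ pulled back from $S^{[n]}\times S$. The fibrewise evaluation at the universal point yields a surjection $p_n^*[dH]^{[n]}|_{\Sigma_n}\twoheadrightarrow q_n^*(dH)|_{\Sigma_n}$ of a rank-$n$ bundle onto a line bundle, with locally free kernel $K$ of rank $n-1$; Whitney's formula gives $P_k^{[n]}|_{\Sigma_n}=c_k(K)+d\,c_{k-1}(K)\,q_n^*H$. Substituting these into the relevant $\beta$ and using $q_n^*H^3=0$, each $\beta|_{\Sigma_n}$ telescopes to $0$ (the extra powers of $H$ from $H^2$ and from the quotient line bundle $dH$ force a vanishing $H^3$). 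This establishes (4)--(6).

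Finally, (1)--(3) follow from (4)--(6) by the dictionary together with peeling off one factor of $\cl_n$: for $i\ge 1$,
\[\theta_i^2(n)\,P_n^{[n]}=(\phi_n)_*\big((-\cl_n)^{i-1}\cdot(-\cl_n)\,\rho_n^*H^2\,\phi_n^*P_n^{[n]}\big)=0\]
by the projection formula and (4), and identically the combinations in (2) and (3) reduce to (5) and (6). The main obstacle is precisely the proof of (4)--(6): making rigorous that $c_1(\cl_n)$ is supported over $\Sigma_n$, so that multiplication by a class vanishing on $\Sigma_n$ annihilates it, and correctly computing $P_k^{[n]}|_{\Sigma_n}$ from the tautological quotient; once these are in place the remaining vanishing is a short computation resting crucially on $H^3=0$ on $\p^2$.
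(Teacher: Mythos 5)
Your argument is correct, but it is genuinely different from the paper's. The paper proves (1)--(3) and (4)--(6) simultaneously by induction on $n$: the base case $n=1$ is checked directly, and in the inductive step the pullback of, say, $\theta_i^2(n+1)P_{n+1}^{[n+1]}$ to $\p(\mi_n)$ is rewritten via Proposition \ref{rrth} and (\ref{pnrn}) as $-A_i(n)H^2\cl_nP_n^{[n]}$; the class $H^2\cl_nP_n^{[n]}$ is then shown to lie in $\Omega^{\bot}$ (by the inductive hypothesis and Lemma \ref{omep}) and simultaneously in $\Omega$ (being $-\psi_n^*(\theta_2^2(n+1)P_{n+1}^{[n+1]})$), hence vanishes. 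You instead give a direct, non-inductive geometric proof: $c_1(\cl_n)$ is supported over $E=\sigma_n^{-1}(\Sigma_n)$ because $\mi_n$ trivializes off $\Sigma_n$, so $\cl_n\cdot\sigma_n^*\beta=0$ once $\beta|_{\Sigma_n}=0$, and the evaluation sequence $0\to K\to p_n^*[dH]^{[n]}|_{\Sigma_n}\to q_n^*(dH)|_{\Sigma_n}\to 0$ plus Whitney shows the three combinations in (4)--(6) restrict to $H^2c_n(K)$, $-\tfrac1dHc_n(K)$, $\tfrac1{d^2}c_n(K)$ respectively; your deduction of (1)--(3) by $(\phi_n)_*((-\cl_n)^i\rho_n^*H^j)=\theta_i^j(n)$ and the projection formula is exactly the dictionary the paper also uses via (\ref{psl}). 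Two small points: the telescoping is not forced by $q_n^*H^3=0$ alone --- the surviving term is a multiple of $c_n(K)$, which vanishes precisely because $K$ has rank $n-1$, so you should say this explicitly; and since $\Sigma_n$ and $E$ may be singular you should carry out the restriction-and-cap argument in cohomology (or Borel--Moore homology), which is harmless as the lemma is stated in $H^*(\cdot,\bq)$. Your route is shorter and explains conceptually why these particular combinations vanish (they are the top Chern class of $p_n^*[dH]^{[n]}-q_n^*(dH)$ restricted to $\Sigma_n$, weighted by powers of $H$); the paper's inductive machinery has the advantage that it is reused essentially verbatim for the harder Lemmas \ref{pnt31}, \ref{pnt32} and \ref{pnt41}, where no such clean geometric interpretation is available.
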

\begin{proof}We do induction on $n$.  When $n=1$, we only need to check for $i=2,3$.  We have $S^{[1]}=S$ and the universal family $\Sigma_1\subset S\times S$ is the diagonal $\Delta$.  Hence by (\ref{cpdel}) $\theta_2^0(n)=-\mathbf{1},\theta_2^1(1)=-H,\theta_2^2(1)=-H^2,\theta_3^0(1)=-3H,\theta_3^1(1)=-3H^2$, $\theta_3^2(n)=0$, $P_{1}^{[1]}=dH$, $P_0^{[1]}=\mathbf{1}$ and $P_k^{[1]}=0$ for $k<0$.  It is easy to check (1)-(3) hold for $n=1$.  

Assume (1)-(3) hold for $n$.  We need to show that they hold for $n+1$.  

It is enough to show that the pullback of (1)-(3) from $S^{[n+1]}$ to $\p(\mi_n)$ hold.  By Proposition \ref{rrth} and (\ref{pnrn}) we have on $\p(\mi_n)$
\begin{eqnarray}\label{thpn}\theta^2_i(n+1)P_{n+1}^{[n+1]}&=&(\theta^2_i(n)-H^2A_i(n))P_n^{[n]}(\cl_n+dH)\nonumber\\
&=&\theta_i^2(n)P_n^{[n]}(\cl_n+dH)-A_i(n)H^2\cl_nP_n^{[n]}\nonumber\\
&=&-A_i(n)H^2\cl_nP_n^{[n]}.
\end{eqnarray} 

By (\ref{psl}) we have $(\phi_n)_*(H^j(-\cl_n)^i)=\theta_i^{j}(n)$ for $j=0,1,2$.  Since $\theta_i^2(n)P_n^{[n]}=0$ for all $i\geq 1$ by induction assumption, we have 
$$(\phi_n)_*\big((H^2\cl_nP_n^{[n]})\cdot H^i\cdot\cl_n^j\big)=0\text{ for all }i,j\in\bz_{\geq 0}$$ and hence $H^2\cl_nP_n^{[n]}\in \Omega^{\bot}$ by Lemma \ref{omep}.

On the other hand, $A_2(n)=\mathbf{1}$ and hence by (\ref{thpn}) 
$$\theta_2^2(n+1)P_{n+1}^{[n+1]}=-H^2\cl_nP_n^{[n]}.$$ 
Therefore $H^2\cl_nP_n^{[n]}\in\Omega\cap\Omega^{\bot}$ and hence $H^2\cl_nP_n^{[n]}=0$ and also $\theta^2_i(n+1)P_{n+1}^{[n+1]}=0$ for all $i\geq 1$.  We have proved (1) and (4).  

We also have on $\p(\mi_n)$
\begin{eqnarray}\label{thpn1}&&\theta^2_i(n+1)P_{n}^{[n+1]}-\frac1d\theta_i^1(n+1)P_{n+1}^{[n+1]}\nonumber\\&=&(\theta^2_i(n)-H^2A_i(n))(P_n^{[n]}+P_{n-1}^{[n]}(\cl_n+dH)\nonumber\\ &&-\frac1d(\theta_i^1(n)-HA_i(n)-H^2B_i(n))P_n^{[n]}(\cl_n+dH)\nonumber\\
&=&\theta_i^2(n)P_n^{[n]}+(\theta_i^2(n)P_{n-1}^{[n]}-\frac1d\theta_i^1(n)P_n^{[n]})(\cl_n+dH)\nonumber\\ &&-A_i(n)(H^2\cl_nP_{n-1}^{[n]}-\frac1dH\cl_nP_n^{[n]})\nonumber\\
&=&-A_i(n)(H^2\cl_nP_{n-1}^{[n]}-\frac1dH\cl_nP_n^{[n]}),
\end{eqnarray} 
and
\begin{eqnarray}\label{thpn2}&&\theta^2_i(n+1)P_{n-1}^{[n+1]}-\frac1d\theta_i^1(n+1)P_{n}^{[n+1]}+\frac1{d^2}P_{n+1}^{[n+1]}\nonumber\\&=&(\theta^2_i(n)-H^2A_i(n))(P_{n-1}^{[n]}+P_{n-2}^{[n]}(\cl_n+dH)\nonumber\\ &&-\frac1d(\theta_i^1(n)-HA_i(n)-H^2B_i(n))(P_n^{[n]}+P_{n-1}^{[n]}(\cl_n+dH))\nonumber\\
&&+\frac1{d^2}(\theta_i^0-A_i(n)-HB_i(n)-H^2C_i(n))P_n^{[n]}(\cl_n+dH)\nonumber\\
&=&(\theta_i^2(n)P_{n-1}^{[n]}-\frac1d\theta_i^1(n)P_n^{[n]})+(\theta_i^2(n)P_{n-2}^{[n]}-\frac1d\theta_i^1(n)P_{n-1}^{[n]}+\frac1{d^2}\theta_i^0(n)P_n^{[n]})(\cl_n+dH)\nonumber\\ 
&&-A_i(n)(H^2\cl_nP_{n-2}^{[n]}-\frac1dH\cl_nP_{n-1}^{[n]}+\frac1{d^2}\cl_nP_n^{[n]})-B_i(n)(H^2\cl_nP_{n-1}^{[n]}-\frac1dH\cl_nP_n^{[n]})\nonumber\\
\end{eqnarray} 
One can prove (2) (3) (5) and (6) analogously.  We leave the rest of the proof to the reader.
\end{proof}

\begin{proof}[Proof of Theorem \ref{main3} (1)]By Lemma \ref{pnt2} on $S^{[n]}\times\ls$ we have
\begin{equation}\label{exg12}\gamma_1(2)P^{[n]}(\xi)+\left(\frac12-\frac{n}{d^2}\right)\xi^2P^{[n]}(\xi)=\xi^3\big(\theta_2^2(n)P_{n-3}^{[n]}-\frac1d\theta_2^1(n)P_{n-2}^{[n]}-\frac{n}{d^2}P_{n-1}^{[n]}\big)+\xi^4\cdot \mh,\end{equation}
for some $\mh\in A^*(S^{[n]}\times\ls)$.  Therefore
\[\gamma_1(2)P^{[n]}(\xi)+\left(\frac12-\frac{n}{d^2}\right)\xi^2P^{[n]}(\xi)\in \Ker(\xi^{\ttb+1-3})\]
and hence $\gamma_1(2)\in \widetilde{P}_1\Gamma^4$. In order to show $\gamma_1(2)\not\in \widetilde{P}_0\Gamma^4$, it suffices to show that $\nexists \gamma\in \Gamma^{2},$ such that 
\begin{equation}\label{apzero}\xi\gamma P^{[n]}(\xi)-\xi^3\big(\theta_2^2(n)P_{n-3}^{[n]}-\frac1d\theta_2^1(n)P_{n-2}^{[n]}-\frac{n}{d^2}P_{n-1}^{[n]}\big)\in \xi^4 \cdot A^*(S^{[n]}\times \ls).\end{equation}

Assume we can find $\gamma\in\Gamma^2$ satisfying (\ref{apzero}), we write $\gamma=a\gamma_0(2)+b\gamma_2(0)=a\xi+b\gamma_2(0)$ with $a,b\in\bq$.  Then we have
\begin{equation}\begin{cases}b\gamma_2(0)P_n^{[n]}=0\\ b\gamma_2(0)P_{n-1}^{[n]}+aP_n^{[n]}=0\\ b\gamma_2(0)P_{n-2}^{[n]}+aP_{n-1}^{[n]}= \big(\theta_2^2(n)P_{n-3}^{[n]}-\frac1d\theta_2^1(n)P_{n-2}^{[n]}-\frac{n}{d^2}P_{n-1}^{[n]}\big)\end{cases}\end{equation}
Recall that $$\gamma_2(0)=\frac12\left(\left(d+\frac{2n}d\right)\theta_2^1(n)-\theta_3^0(n)\right).$$
By Lemma \ref{inp13} (1) (2) we have 
\begin{equation}\label{nog2}\gamma_2(0)P_n^{[n]}\neq 0,\text{ for }d\neq 1,2.\end{equation}  
Hence $a=b=0$ and $\big(\theta_2^2(n)P_{n-3}^{[n]}-\frac1d\theta_2^1(n)P_{n-2}^{[n]}-\frac{n}{d^2}P_{n-1}^{[n]}\big)=0$.
Therefore it suffices to show 
\begin{equation}\label{nezan}a_n:=\theta_2^2(n)P_{n-3}^{[n]}-\frac1d\theta_2^1(n)P_{n-2}^{[n]}-\frac{n}{d^2}P_{n-1}^{[n]}\neq0.\end{equation}
We may assume $a_{n+1}=0$, then 
\begin{eqnarray}0&=&(\phi_n)_*(\psi_n^*a_{n+1}H)\nonumber\\
&=&(\phi_n)_*\big((\theta_2^2(n)-H^2)(P_{n-2}^{[n]}+P_{n-3}^{[n]}(\cl_n+dH))H\big)\nonumber\\
&&-\frac1d(\phi_n)_*\big((\theta_2^1(n)-H)(P_{n-1}^{[n]}+P_{n-2}^{[n]}(\cl_n+dH))H\big)\nonumber\\
&&-\frac{n+1}{d^2}(\phi_n)_*\big(P_{n}^{[n]}+P_{n-1}^{[n]}(\cl_n+dH)H\big)\nonumber\\
&=&d(\theta_2^2(n)P_{n-3}^{[n]}-\frac1d\theta_2^1(n)P_{n-2}^{[n]}-\frac{n}{d^2}P_{n-1}^{[n]})=da_n.
\end{eqnarray} 
Therefore $a_{n+1}=0\Rightarrow a_n=0$, and hence we must have $a_1=0$ which contradicts with 
$$a_1=-\frac{1}{d^2}P_0^{[1]}=-\frac1{d^2}\cdot\mathbf{1}\neq 0.$$

Therefore $\gamma_1(2)$ is almost of perversity 1.
\end{proof}
\begin{lemma}\label{inp13}On $S^{[n]}$ we have 
\begin{enumerate}
\item[(1)]$\int_{S^{[n]}}\theta_2^1(n)^nP_n^{[n]}=(-1)^nd^n$

~~

\item[(2)]$\int_{S^{[n]}}\left(\theta_3^0(n)-\left(\frac{2(n-1)}d+3\right)\theta_2^1(n)\right)\theta_2^1(n)^{n-1}P_n^{[n]}=0$
\end{enumerate}
\end{lemma}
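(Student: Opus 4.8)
The plan is to prove both identities simultaneously by induction on $n$, transporting the integral from $S^{[n+1]}$ down to $S^{[n]}$ through the incidence variety $\p(\mi_n)$. Two tools drive everything: the map $\psi_n$ is generically finite of degree $n+1$, so that $\int_{S^{[n+1]}}\beta=\tfrac1{n+1}\int_{\p(\mi_n)}\psi_n^*\beta$; and the pushforward formula (\ref{psl}), which in the form $(\phi_n)_*(\cl_n^a\cdot\rho_n^*H^b)=(-1)^a\theta_a^b(n)$ computes $(\phi_n)_*$ of any monomial in $\cl_n$ and $\rho_n^*H$. I will repeatedly use the vanishings $\rho_n^*H^3=0$, $\theta_1^j(n)=0$, $\theta_0^0(n)=\theta_0^1(n)=0$, $\theta_0^2(n)=\mathbf 1$ and $\theta_2^0(n)=-n\mathbf 1$.

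For (1) I would pull $\theta_2^1(n+1)$ and $P_{n+1}^{[n+1]}$ back to $\p(\mi_n)$. Proposition \ref{rrth} with the trivial evaluations $A_2(n)=\mathbf 1$, $B_2(n)=0$ gives $\psi_n^*\theta_2^1(n+1)=\phi_n^*\theta_2^1(n)-\rho_n^*H$, while (\ref{pnrn}) gives $\psi_n^*P_{n+1}^{[n+1]}=\phi_n^*P_n^{[n]}\cdot(\cl_n+d\rho_n^*H)$. Expanding $(\phi_n^*\theta_2^1(n)-\rho_n^*H)^{n+1}$ and discarding $\rho_n^*H^j$ for $j\ge 3$, the only monomial surviving the pushforward is the one pairing $\rho_n^*H^2$ against $\theta_0^2(n)=\mathbf 1$; a short computation yields $(\phi_n)_*[\psi_n^*(\theta_2^1(n+1)^{n+1}P_{n+1}^{[n+1]})]=-(n+1)d\,\theta_2^1(n)^n P_n^{[n]}$. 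Hence $I_n:=\int_{S^{[n]}}\theta_2^1(n)^nP_n^{[n]}$ satisfies $I_{n+1}=-d\,I_n$, and since $S^{[1]}=\p^2$ with $\theta_2^1(1)=-H$, $P_1^{[1]}=dH$ gives $I_1=-d$, we obtain $I_n=(-1)^nd^n$.

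For (2) I would run the identical scheme on $\bigl(\theta_3^0(n+1)-(\tfrac{2n}d+3)\theta_2^1(n+1)\bigr)\theta_2^1(n+1)^nP_{n+1}^{[n+1]}$. Here Proposition \ref{rrth} needs $A_3(n)=-2\cl_n$, $B_3(n)=3$, $C_3(n)=0$, giving $\psi_n^*\theta_3^0(n+1)=\phi_n^*\theta_3^0(n)+2\cl_n-3\rho_n^*H$. After substituting and collecting by powers of $\cl_n$ and $\rho_n^*H$, every monomial with $\theta_1^j$, $\theta_0^0$, or $\theta_0^1$ drops out, and the single term carrying $\cl_n^2\rho_n^*H^2$ pushes to a multiple of $\theta_2^2(n)P_n^{[n]}$, which vanishes by Lemma \ref{pnt2}(1). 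What remains assembles into exactly $-nd\bigl(\theta_3^0(n)-(\tfrac{2(n-1)}d+3)\theta_2^1(n)\bigr)\theta_2^1(n)^{n-1}P_n^{[n]}$, so that $J_n:=\int_{S^{[n]}}\bigl(\theta_3^0(n)-(\tfrac{2(n-1)}d+3)\theta_2^1(n)\bigr)\theta_2^1(n)^{n-1}P_n^{[n]}$ obeys $J_{n+1}=\tfrac{-nd}{n+1}J_n$. The base case $J_1=\int_{\p^2}(\theta_3^0(1)-3\theta_2^1(1))P_1^{[1]}=\int_{\p^2}(-3H+3H)(dH)=0$ then forces $J_n=0$ for all $n$.

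The delicate point is the coefficient matching in (2): the level-$(n+1)$ statement carries $\tfrac{2n}d+3$ whereas the level-$n$ integrand must carry $\tfrac{2(n-1)}d+3$, so the recursion closes only because the term $nd\,(\tfrac{2n}d+3)\,\theta_2^1(n)^n$ (produced by the $-(\tfrac{2n}d+3)\theta_2^1(n+1)$ factor) and the net contribution $-2n\,\theta_2^1(n)^n$ (produced by the $\cl_n^2$ and $\rho_n^*H^2$ monomials via $\theta_2^0(n)=-n$ and $\theta_0^2(n)=\mathbf 1$) together shift the coefficient down from $\tfrac{2n}d+3$ to $\tfrac{2(n-1)}d+3$. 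Verifying this bookkeeping, and confirming that $\theta_2^2(n)P_n^{[n]}$ is the only genuinely ``new'' class produced (so that Lemma \ref{pnt2}(1) disposes of it), is where the real care is needed; the rest is the routine binomial expansion controlled by $\rho_n^*H^3=0$.
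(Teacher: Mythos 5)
Your proposal follows essentially the same route as the paper's proof: induction on $n$, transporting both integrals through the incidence variety $\p(\mi_n)$ via the generically degree-$(n+1)$ map $\psi_n$, pulling back with Proposition \ref{rrth} and (\ref{pnrn}), pushing forward with (\ref{psl}), invoking Lemma \ref{pnt2}(1) to kill the $\theta^2$-terms, and checking the same base cases on $S^{[1]}=\p^2$. The one discrepancy is your recursion coefficient $-nd/(n+1)$ in part (2) (the paper's computation yields $(n+1)J_{n+1}=-dJ_n$), but since the recursion is homogeneous and $J_1=0$, this slip does not affect the conclusion.
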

\begin{proof}We do induction on $n$.  

When $n=1$, $S^{[1]}=S$, $\theta_2^1(1)=-H,\theta_3^0(1)=-3H$, $P_{1}^{[1]}=dH$ and $P_0^{[1]}=\mathbf{1}$.  Hence the lemma holds for this case.

By (\ref{defabc}) we have $A_2(n)=\mathbf{1}$, $B_2(n)=C_2(n)=0$, $A_3(n)=-2\cl_n$, $B_3(n)=3\cdot\mathbf{1}$ and $C_3(n)=0$.  Since $\psi_n^*:\p(\mi_n)\ra S^{[n+1]}$ is surjective and generically of degree $n+1$, we have
\begin{eqnarray}&&(n+1)\int_{S^{[n+1]}}\theta_2^1(n+1)^{n+1}P_{n+1}^{[n+1]}=\int_{\p(\mi_n)}\theta_2^1(n+1)^{n+1}P_{n+1}^{[n+1]}\nonumber\\
&=&\int_{\p(\mi_n)}(\theta_2^1(n)-H)^{n+1}P_n^{[n]}(\cl_n+dH)\nonumber\\
&=&\int_{\p(\mi_n)}\left(\theta_2^1(n)^{n+1}-(n+1)\theta_2^1(n)^nH+\binom{n+1}{2}\theta_2^1(n)^{n-1}H^2\right)P_n^{[n]}(\cl_n+dH)\nonumber\\
&=&\int_{S^{[n]}}\left(-\theta_2^1(n)^{n+1}\theta_1^0(n)+(n+1)\theta_2^1(n)^n(\theta_1^1(n)-d)-\binom{n+1}{2}\theta_2^1(n)^{n-1}\theta_1^2(n)\right)P_n^{[n]}\nonumber\\
&=&-(n+1)d\int_{S^{[n]}}\theta_2^1(n)^{n}P_n^{[n]},\nonumber
\end{eqnarray}
where the last equality is because $(\phi_n)_*(-\cl_nH^j)=\theta_1^j(n)=0$ for $j=0,1,2.$.  By induction assumption 
$\int_{S^{[n]}}\theta_2^1(n)^{n}P_n^{[n]}=(-1)^nd^n$, we have $\int_{S^{[n+1]}}\theta_2^1(n+1)^{n+1}P_{n+1}^{[n+1]}=(-1)^{n+1}d^{n+1}$ and (1) is proved. 

Notice that $\theta_i^2(n)P_n^{[n]}=0$ for all $i\geq 1$ and $H^2\cl_nP_n^{[n]}=0$ by Lemma \ref{pnt2}.  Analogously, we have
\begin{eqnarray}&&(n+1)\int_{S^{[n+1]}}\left(\theta_3^0(n+1)-\left(\frac{2n}d+3\right)\theta_2^1(n+1)\right)\theta_2^1(n+1)^{n}P_{n+1}^{[n+1]}
\nonumber\\&=&\int_{\p(\mi_n)}\left(\theta_3^0(n)+2\cl_n-3H-\left(\frac{2n}d+3\right)(\theta_2^1(n)-H)\right)(\theta_2^1(n)-H)^{n}P_{n}^{[n]}(\cl_n+dH)\nonumber\\
&=&\int_{\p(\mi_n)}(\theta_2^1(n)^n-nH\theta_2^1(n)^{n-1})\left(\theta_3^0(n)+2\cl_n-\left(\frac{2n}d+3\right)\theta_2^1(n)+\frac{2n}dH\right)P_n^{[n]}(\cl_n+dH)\nonumber\\
&=&\int_{\p(\mi_n)}2\cl_n^2\big(\theta_2^1(n)^{n-1}(-H)+\theta_2^1(n)^n\big)P_n^{[n]}+\int_{S^{[n]}}(2+2n)\theta_2^1(n)^nP_n^{[n]}\nonumber\\
&&-d\int_{S^{[n]}}\left(\theta_3^0(n)-\left(\frac{2(n-1)}d+3\right)\theta_2^1(n)\right)\theta_2^1(n)^{n-1}P_n^{[n]}\nonumber\\
&=&\int_{S^{[n]}}2\big(-\theta_2^1(n)^{n}+\theta_2^0(n)\theta_2^1(n)^n\big)P_n^{[n]}+\int_{S^{[n]}}(2+2n)\theta_2^1(n)^nP_n^{[n]}\nonumber\\
&&-d\int_{S^{[n]}}\left(\theta_3^0(n)-\left(\frac{2(n-1)}d+3\right)\theta_2^1(n)\right)\theta_2^1(n)^{n-1}P_n^{[n]}\nonumber\\
&=&-d\int_{S^{[n]}}\left(\theta_3^0(n)-\left(\frac{2(n-1)}d+3\right)\theta_2^1(n)\right)\theta_2^1(n)^{n-1}P_n^{[n]},\nonumber
\end{eqnarray}
where the last equality is because $\theta_2^0(n)=-n\cdot\mathbf{1}$.  Hence (2) is proved.
\end{proof}

\subsection{The computation for $\gamma_2(1)$.}%%%%%%%%%%%%.    Subsection 4.2.  %%%%%%%%%%%  
By (\ref{exfga}) we have
\[\gamma_2(1)=-\frac12\left(\frac1d\theta_2^1(n)^2+\theta_3^1(n)-2d\theta_2^2(n)\right).\]
\begin{lemma}\label{pnt31}In $H^*(S^{[n]},\bq)$ we have for all $i,j\geq 2$
\begin{itemize}
\item[(1)]$(\frac1d\theta_j^1(n)\theta_i^1(n)+\theta_{i+j-1}^1(n))P_n^{[n]}=0.$
\end{itemize}
Moreover in $H^*(\p(\mi_n),\bq)$, we have for all $i,j\geq 2$
\begin{itemize}
\item[(2)]$(-\frac1d\theta_i^1(n)H\cl-\theta_i^1(n)H^2+(-\cl)^iH)P_n^{[n]}=0;$

~~

\item[(3)]$(\frac1d\theta_i^1(n)H(-\cl)^j+(-\cl)^{i+j-1}H)P_n^{[n]}=0.$
\end{itemize}
\end{lemma}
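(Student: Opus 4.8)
The plan is to prove all three identities simultaneously by induction on $n$, in exact parallel with the proof of Lemma \ref{pnt2}: statement (1) is the assertion living on $S^{[n]}$ that we carry through the induction, while (2) and (3) are the auxiliary identities on $\p(\mi_n)$ that surface as the ``error terms'' when one compares level $n$ with level $n+1$. Abbreviating $\Phi_{ij}(n):=\frac1d\theta_i^1(n)\theta_j^1(n)+\theta_{i+j-1}^1(n)$, so that the left side of (1) is $\Phi_{ij}(n)P_n^{[n]}$, the two engines of the argument are, on one side, the pullback formulas of Proposition \ref{rrth} together with (\ref{pnrn}), expressing $\psi_n^*\theta_k^j(n+1)$ and $\psi_n^*P_{n+1}^{[n+1]}$ through $\phi_n^*\theta_k^j(n)$, $\rho_n^*H$, $\cl_n$ and $\phi_n^*P_\bullet^{[n]}$; and, on the other side, the orthogonality criterion of Lemma \ref{omep} with $\Omega\cap\Omega^\perp=\{0\}$. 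Throughout I will freely use the pushforward rule $(\phi_n)_*(\rho_n^*H^j\cdot(-\cl_n)^i)=\theta_i^j(n)$ from (\ref{psl}), the elementary vanishings $\theta_1^\bullet(n)=0$, $\theta_0^0(n)=\theta_0^1(n)=0$, $\theta_0^2(n)=\mathbf{1}$, and — crucially — the already established relations of Lemma \ref{pnt2}, namely $\theta_k^2(n)P_n^{[n]}=0$ and $H^2\cl_nP_n^{[n]}=0$.

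First I would dispose of the base case. For $n=1$ statement (1) is vacuous by dimension: $\Phi_{ij}(1)\in A^{i+j-2}(S)$, so its product with $P_1^{[1]}=dH$ lands in $A^{i+j-1}(S)=0$ for $i,j\ge2$. The $n=1$ instances of (2), (3) are either top classes on the fourfold $\p(\mi_1)=\mathrm{Bl}_\Delta(S\times S)$, which one checks integrate to zero using $(\sigma_1)_*((-\cl_1)^2)=c_2(\mo_\Delta)=-[\Delta]$, or vanish for degree reasons; in any case they are subsumed by the inductive step, which establishes (2), (3) at level $n$ while passing from (1) at level $n$ to (1) at level $n+1$.

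For the inductive step, assume (1) on $S^{[n]}$. Pulling the level-$(n+1)$ version of (1) back along $\psi_n$ and substituting Proposition \ref{rrth} and (\ref{pnrn}), the leading term equals $\phi_n^*\big(\Phi_{ij}(n)P_n^{[n]}\big)$, which dies by the inductive hypothesis. What survives is an error class $E$ on $\p(\mi_n)$, a combination of terms of shapes $\theta_\bullet^1(n)H\cl_n\,P_n^{[n]}$, $\theta_\bullet^1(n)H^2P_n^{[n]}$ and $(-\cl_n)^\bullet H\,P_n^{[n]}$ whose coefficients are built from $A_\bullet(n),B_\bullet(n)$ of (\ref{defabc}). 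The key bookkeeping step is to simplify $E$ using the Lemma \ref{pnt2} relations $\theta^2_\bullet(n)P_n^{[n]}=H^2\cl_nP_n^{[n]}=0$ and thereby recognize it, for the appropriate index choices, as a scalar multiple of the left side of (2) (when one of $i,j$ equals $2$) or of (3) (in general). For instance a direct expansion gives $\psi_n^*\big(\Phi_{22}(n+1)P_{n+1}^{[n+1]}\big)=2\big(-\tfrac1d\theta_2^1(n)H\cl_n-\theta_2^1(n)H^2+(-\cl_n)^2H\big)P_n^{[n]}$ modulo $H^2\cl_nP_n^{[n]}=0$, which is $2$ times the $i=2$ instance of (2).

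Finally I would close the induction as in Lemma \ref{pnt2}. On one hand $E$ lies in $\Omega$, being a $\psi_n^*$-pullback by construction. On the other hand $E\in\Omega^\perp$: by Lemma \ref{omep} it suffices to check $(\phi_n)_*(E\cdot\rho_n^*H^a\cl_n^b)=0$ for all $a,b\ge0$, and pushing forward turns each monomial into $\theta$-classes via $(\phi_n)_*(\rho_n^*H^c(-\cl_n)^e)=\theta_e^c(n)$, after which the hypothesis (1)$_n$ (in the form $\Phi_{i,e}(n)P_n^{[n]}=0$), together with $\theta_1^\bullet=0$, $\theta_0^2=\mathbf{1}$ and $\theta_\bullet^2(n)P_n^{[n]}=0$, collapses everything to zero — this is exactly the computation showing that the left sides of (2) and (3) are orthogonal to $\Omega$. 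Since $\Omega\cap\Omega^\perp=\{0\}$ we get $E=0$, which is simultaneously statement (2)/(3) at level $n$ and, because $\psi_n^*$ is injective, statement (1) at level $n+1$. I expect the main obstacle to be precisely the middle bookkeeping: statement (1) is quadratic in the $\theta^1$'s, so the pullback produces genuinely new cross terms $\tfrac1d\big(A_i\theta_j^1+A_j\theta_i^1\big)$ and $\tfrac1dA_iA_j$ absent from the linear situation of Lemma \ref{pnt2}, and tracking the binomial coefficients inside $A_\bullet,B_\bullet$ while reducing modulo the Lemma \ref{pnt2} relations is where the real work — and the risk of a sign or combinatorial slip — resides.
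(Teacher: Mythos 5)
Your proposal is correct and follows essentially the same route as the paper: induction on $n$ with the $n=1$ case vacuous by dimension, pullback along $\psi_n$ via Proposition \ref{rrth} and (\ref{pnrn}) so that the leading term dies by the inductive hypothesis, identification of the surviving error term with instances of (2)/(3) (your $i=j=2$ computation giving $2$ times the $i=2$ instance of (2) matches the paper's (\ref{tpsp2}) exactly), and the $\Omega\cap\Omega^\perp=\{0\}$ argument with Lemma \ref{omep} to kill it. The only point to make explicit when writing it up is the bootstrap order you allude to with ``appropriate index choices'': one must treat $i=j=2$ first, then $j=2$ with general $i$, before the general case, since for arbitrary $i,j$ the error term is a sum of two distinct instances of (3) rather than a scalar multiple of a single one.
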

\begin{proof}Since $H^2\cl_nP_n^{[n]}=0$ by Lemma \ref{pnt2} (4), (2) implies (3).  We only need to prove (1) and (2).  We do the induction on $n$.  By the dimension reason, (1) is true for $n=1$.  We assume (1) hold for $n$, then it is enough to show that the pullback of $(\frac1d\theta_j^1(n+1)\theta_i^1(n+1)+\theta_{i+j-1}^1(n+1))P_{n+1}^{[n+1]}$ to $\p(\mi_n)$ is zero.  On $\p(\mi_n)$ we have
\begin{eqnarray}\label{g21tp}&&(\frac1d\theta_j^1(n+1)\theta_i^1(n+1)+\theta_{i+j-1}^1(n+1))P_{n+1}^{[n+1]}\nonumber\\
&=&\frac1d(\theta_j^1(n)-A_j(n)H-B_j(n)H^2)(\theta_i^1(n)-A_i(n)H-B_i(n)H^2)P_{n}^{[n]}(\cl_n+dH)\nonumber\\
&&+\big(\theta_{i+j-1}^1(n)-A_{i+j-1}(n)H-B_{i+j-1}(n)H^2\big)P_{n}^{[n]}(\cl_n+dH)\nonumber\\
&=&\nonumber(\frac1d\theta_j^1(n)\theta_i^1(n)+\theta_{i+j-1}^1(n))P_n^{[n]}(\cl_n+dH)\nonumber\\
&&-(\frac1d\theta_j^1(n)A_{i}(n)H+\frac1d\theta_i^1(n)A_{j}(n)H+A_{i+j-1}(n)H)P_n^{[n]}(\cl_n+dH)\nonumber\\
&=&-(i-1)(\frac1d\theta_j^1(n)(-\cl)^{i-2}H+(-\cl)^{i+j-3})P_n^{[n]}(\cl_n+dH)\nonumber\\
&&-(j-1)(\frac1d\theta_i^1(n)(-\cl)^{j-2}H+(-\cl)^{i+j-3})P_n^{[n]}(\cl_n+dH)\nonumber\\
&=&-(i-1)(\frac1d\theta_j^1(-\cl)^{i-1}H+\theta_j^1(n)(-\cl)^{i-2}H^2+(-\cl)^{i+j-2})P_n^{[n]}\nonumber\\
&&-(j-1)(\frac1d\theta_i^1(-\cl)^{j-1}H+\theta_i^1(n)(-\cl)^{j-2}H^2+(-\cl)^{i+j-2})P_n^{[n]}\nonumber\\\end{eqnarray}
where the one before the last equation is because of the induction assumption and the fact that for $i\geq 2$
\begin{equation}\label{anpn}A_i(n)P_n^{[n]}=\left(\sum_{k= 0}^{i-2}\theta_{i-k-2}^2(n)(-\cl_n)^{k}(k+1)\right)P_n^{[n]}=(i-1)(-\cl_n)^{i-2}P_n^{[n]}.
\end{equation}

By induction assumption and Lemma \ref{omep}, we have for $i\geq 2$, $$\left(-\frac1d\theta_i^1(n)H\cl-\theta_i^1(n)H^2+(-\cl)^iH\right)P_n^{[n]}\in\Omega^{\bot}.$$  Let $i=j=2$ and by (\ref{g21tp}) we have 
$$\left(-\frac1d\theta_2^1(n)H\cl-\theta_2^1(n)H^2+(-\cl)^2H\right)P_n^{[n]}\in\Omega.$$
Therefore 
\begin{equation}\label{tpsp2}\left(-\frac1d\theta_2^1(n)H\cl-\theta_2^1(n)H^2+(-\cl)^2H\right)P_n^{[n]}=0.\end{equation}  Again let $j=2$ by (\ref{g21tp}) and (\ref{tpsp2}) we have
$$\left(-\frac1d\theta_i^1(n)H\cl-\theta_i^1(n)H^2+(-\cl)^iH\right)P_n^{[n]}\in\Omega.$$
Therefore 
\[\left(-\frac1d\theta_i^1(n)H\cl-\theta_i^1(n)H^2+(-\cl)^iH\right)P_n^{[n]}=0,\]
and also $$\left(\frac1d\theta_j^1(n+1)\theta_i^1(n+1)+\theta_{i+j-1}^1(n+1)\right)P_{n+1}^{[n+1]}=0.$$
The lemma is proved.
\end{proof}
\begin{lemma}\label{pnt32}In $H^*(S^{[n]},\bq)$ we have for all $i,j,i+k,j+k\geq 2$
\begin{eqnarray}\label{g22tp1}&&\left(\frac{j-1}{d}\theta_i^1(n)\theta_{j+k}^1(n)+\frac{i-1}{d}\theta_j^1(n)\theta_{i+k}^1(n)+(i+j-2)\theta_{i+j+k-1}^1(n)\right)P_{n-1}^{[n]}\nonumber\\
&=&\frac{2d-1}{d(d-2)}P_n^{[n]}\left[\frac{j-1}{d}\big(\theta_i^0(n)\theta_{j+k}^1+\theta_i^1(n)\theta_{j+k}^0(n)\big)+\right.\nonumber\\&&\left.+\frac{i-1}{d}\big(\theta_j^0(n)\theta_{i+k}^1(n)+\theta_{j}^1(n)\theta_{i+k}^0(n)\big)+(i+j-2)\theta_{i+j+k-1}^0(n)\right]\nonumber\\
&&+\frac3{d-2}(i+j-2)\theta_{i+j-2+k}^1(n)P_n^{[n]}\nonumber\\
&&-\frac3{d-2}(j-1)\big(\theta_i^1(n)\theta_{j+k-2}^2(n)+\theta_{i-2}^2(n)\theta_{j+k}^1(n)\big)P_n^{[n]}\nonumber\\
&&-\frac3{d-2}(i-1)\big(\theta_j^1(n)\theta_{i+k-2}^2(n)+\theta_{j-2}^2(n)\theta_{i+k}^1(n)\big)P_n^{[n]}\nonumber\\
%&&-\frac3{d-2}(j-1)\big(\theta_{i+1}^1(n)\theta_{j+k-3}^2(n)+\theta_{i-3}^2(n)\theta_{j+k+1}^1(n)\big)P_n^{[n]}\nonumber\\
%&&-\frac3{d-2}(i-1)\big(\theta_{j+1}^1(n)\theta_{i+k-3}^2(n)+\theta_{j-3}^2(n)\theta_{i+k+1}^1(n)\big)P_n^{[n]}\nonumber\\
\end{eqnarray}
In particular let $i,j\geq2,k=0$, then we have
\begin{eqnarray}\label{g22tp2}&&\left(\frac1d\theta_i^1(n)\theta_j^1(n)+\theta_{i+j-1}^1(n)\right)P_{n-1}^{[n]}
\nonumber\\&=&\frac{2d-1}{d(d-2)}\left(\frac1d(\theta_i^0(n)\theta_j^1(n)+\theta_j^0(n)\theta_i^1(n))+\theta_{i+j-1}^0(n)\right)P_n^{[n]}\nonumber\\
&&+\frac3{d-2}\theta_{i+j-2}^1(n)P_n^{[n]}-\frac3{d-2}\big(\theta_i^1(n)\theta_{j-2}^2(n)+\theta_{i-2}^2(n)\theta_{j}^1(n)\big)P_n^{[n]}\nonumber\\%&&-\frac3{d-2}\big(\theta_{i+1}^1(n)\theta_{j-3}^2(n)+\theta_{i-3}^2(n)\theta_{j+1}^1(n)\big)P_n^{[n]}\nonumber\\
\end{eqnarray}
\end{lemma}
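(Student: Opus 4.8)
The plan is to prove the general identity \eqref{g22tp1} by induction on $n$, following the same template as the proofs of Lemma \ref{pnt2} and Lemma \ref{pnt31}: I pull the level-$(n+1)$ statement back to the incidence variety $\p(\mi_n)$ along $\psi_n$, rewrite every factor in terms of level-$n$ data via Proposition \ref{rrth} and the relations (\ref{pnrn}), (\ref{pnro}), and then reduce to the inductive hypothesis. The full triple of indices $(i,j,k)$ must be carried through the induction, since even the specialization $k=0$ in \eqref{g22tp2} forces one to track the general case. The base case is a direct dimension count at small $n$ (essentially $n=1$, where $S^{[1]}=S$, the classes $\theta_i^j(1)$ are explicit from (\ref{cpdel}), and only finitely many terms survive by degree reasons on $\p^2$); this base case pins down the universal coefficients $\frac{2d-1}{d(d-2)}$ and $\frac3{d-2}$.

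For the inductive step the essential point is that the two Chern classes on the two sides of \eqref{g22tp1} transform compatibly. Under $\psi_n^*$ the class $P_n^{[n+1]}$ governing the left-hand side becomes $\phi_n^*P_n^{[n]}+\phi_n^*P_{n-1}^{[n]}(\cl_n+dH)$ by (\ref{pnro}), while $P_{n+1}^{[n+1]}$ governing the right-hand side becomes $\phi_n^*P_n^{[n]}(\cl_n+dH)$ by (\ref{pnrn}). Thus the $P_{n-1}^{[n]}$-part of the pulled-back left-hand side feeds exactly into the left-hand side of the level-$n$ identity, whereas the residual $P_n^{[n]}$-parts of both sides must be matched directly. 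After substituting the recursions of Proposition \ref{rrth} for each $\psi_n^*\theta_i^j(n+1)$ and expanding, I would collect the resulting terms and simplify them using the vanishing relations of Lemma \ref{pnt2} (in particular $\theta_i^2(n)P_n^{[n]}=0$ for $i\geq 1$ and $H^2\cl_nP_n^{[n]}=0$), the relations of Lemma \ref{pnt31}, and the crucial simplification $A_i(n)P_n^{[n]}=(i-1)(-\cl_n)^{i-2}P_n^{[n]}$ from (\ref{anpn}) together with its analogues for $B_i(n)P_n^{[n]}$ and $C_i(n)P_n^{[n]}$.

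The inductive step is then closed, as in Lemma \ref{pnt31}, through the dichotomy of Lemma \ref{omep}: the relevant pulled-back combination lies in $\Omega^{\bot}$ by the pushforward criterion $(\phi_n)_*(x\cdot H^b\cl_n^a)=0$ combined with the inductive hypothesis, and it simultaneously lies in $\Omega=\psi_n^*H^*(S^{[n+1]},\bq)$ because the recursion exhibits it as $\psi_n^*$ of a level-$(n+1)$ class; since $\Omega\cap\Omega^{\bot}=\{0\}$ it must vanish. To evaluate the required pushforwards one uses $(\phi_n)_*((-\cl_n)^aH^b)=\theta_a^b(n)$ and the projection formula, which is precisely where the level-$n$ version of \eqref{g22tp1} is invoked.

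The main obstacle I anticipate is purely computational but genuinely delicate: the left-hand side involves products of three tautological classes, and each factor expands under Proposition \ref{rrth} into a sum whose length grows with the indices, so the number of cross-terms is large and their coefficients must be tracked exactly. The heart of the matter is the emergence of the factor $(d-2)$ in the denominators, which cannot be cosmetic since it forces the restriction $d\geq 3$. I expect it to arise from a cancellation peculiar to $S=\p^2$ entering through the Chern polynomial $c_t(\mo_\Delta)$ in (\ref{cpdel}), where $K_S=-3H$ produces the constants $3$ and $6$ appearing throughout $A_i(n),B_i(n),C_i(n)$. Concretely, the reductions of the inductive step should yield a relation in which the target combination $\big(\frac1d\theta_i^1(n)\theta_j^1(n)+\theta_{i+j-1}^1(n)\big)P_{n-1}^{[n]}$ occurs multiplied by a nonzero scalar proportional to $(d-2)$, balanced by a combination of $P_n^{[n]}$-terms; dividing by this scalar (legitimate exactly for $d\geq 3$) produces the stated coefficients. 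Verifying this exact proportionality, rather than merely that the two sides agree modulo $P_n^{[n]}$-corrections, is the step I expect to demand the most care, and is presumably why the proof is deferred to the appendix.
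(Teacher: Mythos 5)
Your plan coincides with the paper's own proof in Appendix A: induction on $n$ with the base case $n=1$ handled by dimension reasons, pullback along $\psi_n$ to $\p(\mi_n)$ using Proposition \ref{rrth} together with (\ref{pnrn})--(\ref{pnro}) so that the $P_{n-1}^{[n]}$-part feeds the level-$n$ identity, simplification of the residual terms via Lemmas \ref{pnt2} and \ref{pnt31} and $A_i(n)P_n^{[n]}=(i-1)(-\cl_n)^{i-2}P_n^{[n]}$, and closure through the dichotomy $\Omega\cap\Omega^{\bot}=\{0\}$ of Lemma \ref{omep} with pushforwards evaluated by $(\phi_n)_*((-\cl_n)^aH^b)=\theta_a^b(n)$ and the inductive hypothesis. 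What you leave unexecuted is precisely the bookkeeping of the correction terms (the paper's $I$, $II$, $III$ and Lemma \ref{le1ap}), which is the content of the appendix; your heuristic that the factor $d-2$ traces back to $K_S=-3H$ through $c_t(\mo_\Delta)$ is consistent with the cancellation $-(2d^2-3d+1)+d\bar{d}(d^2-3d+2)=0$ that the paper verifies.
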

The proof of Lemma \ref{pnt32} is in Appendix \ref{pl32}.  By (\ref{g22tp2}) for $i=j=2$ we have
\begin{eqnarray}\label{g22tp3}&&\left(\frac1d\theta_2^1(n)\theta_2^1(n)+\theta_{3}^1(n)\right)P_{n-1}^{[n]}
\nonumber\\&=&\frac{2d-1}{d(d-2)}\left(\frac1d(\theta_2^0(n)\theta_2^1(n)+\theta_2^0(n)\theta_2^1(n))+\theta_{3}^0(n)\right)P_n^{[n]}-\frac3{d-2}\theta_{2}^1(n)P_n^{[n]}\nonumber
\end{eqnarray}
Recall that 
\[\gamma_2(0)=\frac12\left(\left(d+\frac{2n}d\right)\theta_2^1(n)-\theta_3^0(n)\right).\]
Hence by Lemma \ref{pnt2} (2) we have
\begin{eqnarray}\label{sp22tp}-2\gamma_2(1)P_{n-1}^{[n]}&=&\left(\frac1d\theta_2^1(n)^2+\theta_3^1(n)\right)P_{n-1}^{[n]}-2\theta_2^1(n)P_n^{[n]}\nonumber\\
&=&\frac{2d-1}{d(d-2)}\left(-\frac{2n}d\theta_2^1(n)+\theta_3^0(n)\right)P_{n}^{[n]}-\frac{2d-1}{d-2}\theta_2^1(n)P_n^{[n]}\nonumber\\
&=&-\frac{2(2d-1)}{d(d-2)}\gamma_2(0)P_n^{[n]}\end{eqnarray}
\begin{proof}[Proof of Theorem \ref{main3} (2)]By Lemma \ref{pnt31} (1) and (\ref{sp22tp}) on $S^{[n]}\times\ls$ we have
\[\gamma_2(1)P^{[n]}(\xi)+\frac{2d-1}{d(d-2)}\xi \gamma_2(0)P^{[n]}(\xi)=\xi^2\cdot \mh_1,\]
for some $\mh_1\in A^*(S^{[n]}\times\ls)$.  Therefore
\[\gamma_2(1)P^{[n]}(\xi)+\frac{2d-1}{d(d-2)}\xi \gamma_2(0)P^{[n]}(\xi)\in \Ker(\xi^{\ttb+1-2})\]
and hence $\gamma_2(1)\in \widetilde{P}_2\Gamma^4$.  Since by (\ref{nog2}) $\frac{2d-1}{d(d-2)}\xi \gamma_2(0)P^{[n]}(\xi)\in \Ker(\xi^{\ttb+1-1})\setminus\Ker(\xi^{\ttb+1-2})$,  $\gamma_2(1)\not\in\widetilde{P}_1\Gamma^4$ and hence is almost of perversity 2.  
\end{proof}

\subsection{The computation for $\gamma_3(0)$.}%%%%%%%%%%%%.   Subsection 4.3    %%%%%%%%%%%%  
By (\ref{exfga}) we have\small
\[\gamma_3(0)=\frac16\left(\theta_4^0(n)+\frac3d\theta_2^1(n)\theta_3^0(n)-(\frac{3n}{d^2}+2)\theta_2^1(n)^2\right)+d\gamma_2(1)+\left(\frac n6-\frac{d^2}2\right)\theta_2^2(n).\]
\normalsize
Define 
$$\gamma'_3(0):=\theta_4^0(n)+\frac3d\theta_2^1(n)\theta_3^0(n)-(\frac{3n}{d^2}+2)\theta_2^1(n)^2+(n-3d^2)\theta_2^2(n).$$
Notice that we have already proved that $\gamma_2(1)$ is almost of perversity 2.  
Also by Lemma \ref{pnt2} (1) (2), we have 
\[\theta_2^2(n)P_n^{[n]}=0,~~\theta_2^2(n)P_{n-1}^{[n]}=\frac1d\theta_2^1(n)P_n^{[n]}.\]
Therefore to show $\gamma_3(0)$ is almost of perversity 3, it suffices to show
\begin{equation}\gamma'_3(0)P_n^{[n]}=0\text{ and }\gamma'_3(0)P_{n-1}^{[n]}\not\in P_n^{[n]}\Gamma^2.
\end{equation}
\begin{lemma}\label{pnt41}In $H^*(S^{[n]},\bq)$ we have for all $i,j,i+k,j+k\geq 2$
\begin{eqnarray}\label{g3tp1}&&\qquad(j-i)\big(\theta_{i+j+k-1}^0(n)-d\theta_{i+j+k-2}^1(n)\big)P_{n}^{[n]}=\nonumber\\
&&P_n^{[n]}\left[\frac{j-1}{d}\big(\theta_i^0(n)\theta_{j+k}^1(n)-\theta_i^1(n)\theta_{j+k}^0(n)\big)-\frac{i-1}{d}\big(\theta_j^0(n)\theta_{i+k}^1(n)-\theta_{j}^1(n)\theta_{i+k}^0(n)\big)\right]\nonumber\\&&+d(j-1)\big(\theta_i^1(n)\theta_{j+k-2}^2(n)-\theta_{i-2}^2(n)\theta_{j+k}^1(n)\big)P_n^{[n]}\nonumber\\
&&-d(i-1)\big(\theta_{j}^1(n)\theta_{i+k-2}^2(n)-\theta_{j-2}^2(n)\theta_{i+k}^1(n)\big)P_n^{[n]}\nonumber\\
\end{eqnarray}
In particular let $i,j\geq2,k=0$, then we have\small
\begin{eqnarray}\label{g3tp2}&&\qquad(j-i)\big(\theta_{i+j-1}^0(n)-d\theta_{i+j-2}^1(n)\big)P_{n}^{[n]}=\nonumber\\
&&P_n^{[n]}\left[\frac{i+j-2}{d}\big(\theta_i^0(n)\theta_{j}^1(n)-\theta_i^1(n)\theta_{j}^0(n)\big)+d(j+i-2)\big(\theta_i^1(n)\theta_{j-2}^2(n)-\theta_{i-2}^2(n)\theta_{j}^1(n)\big)\right]\nonumber\\\end{eqnarray}
\end{lemma}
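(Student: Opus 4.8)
The plan is to prove (\ref{g3tp1}) by induction on $n$, following the same machinery used for Lemma \ref{pnt32} and exploiting two structural features: every term carries a factor $P_n^{[n]}$, and the whole identity is antisymmetric under $i\leftrightarrow j$ (with $k$ held fixed). Write $E_{i,j,k}(n)$ for the left side minus the right side of (\ref{g3tp1}); the goal is $E_{i,j,k}(n)=0$ for all admissible indices. The base case $n=1$ is immediate: on $S^{[1]}=\p^2$ every class of degree exceeding $2$ vanishes, which forces $i+j+k\leq 4$, hence (given $i,j\geq 2$) only $i=j=2$, $k=0$, where both sides vanish by the $i\leftrightarrow j$ antisymmetry. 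Before the inductive step I would record, from Lemma \ref{pnt2}(1), that $\theta_m^2(n)P_n^{[n]}=0$ for all $m\geq 1$; this kills most of the $\theta^2$-products on the right of (\ref{g3tp1}), which survive only in the boundary cases $i=2$ or $j+k=2$ (where the $\theta^2$-index drops to $0$ and $\theta_0^2(n)=\mathbf 1$).

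For the inductive step I would pull the level-$(n+1)$ identity back along $\psi_n$. Since $\psi_n$ is surjective and generically finite of degree $n+1$, $\psi_n^*$ is injective, so it suffices to prove $\psi_n^*E_{i,j,k}(n+1)=0$ on $\p(\mi_n)$. I substitute $\psi_n^*P_{n+1}^{[n+1]}=\phi_n^*P_n^{[n]}\cdot(\cl_n+dH)$ from (\ref{pnrn}) and the three recursions of Proposition \ref{rrth} for $\psi_n^*\theta_m^j(n+1)$. Because the entire expression then carries the factor $\phi_n^*P_n^{[n]}$, I can simplify using Lemma \ref{pnt2}: in particular $A_m(n)P_n^{[n]}=(m-1)(-\cl_n)^{m-2}P_n^{[n]}$ by (\ref{anpn}), while in $B_m(n)P_n^{[n]}$ and $C_m(n)P_n^{[n]}$ the $\theta^2$-parts collapse to a single power of $-\cl_n$ and only the $\theta^1$- (and, for $C_m$, $\theta^0$-) parts persist. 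After expanding, the terms should organize into: (i) the ``zeroth order'' copy $\phi_n^*E_{i,j,k}(n)\cdot(\cl_n+dH)$, which vanishes by the induction hypothesis; and (ii) the $A_m,B_m,C_m$ corrections, which are products of two tautological classes against $P_n^{[n]}$ and powers of $\cl_n,\rho_n^*H$. For (ii) I would reduce using the product relations of Lemma \ref{pnt31} and Lemma \ref{pnt32}. Whatever residual remains lies in $\Omega=\psi_n^*(A^*(S^{[n+1]}))$; checking that $(\phi_n)_*\big(\,\cdot\,\rho_n^*H^a\cl_n^b\big)=0$ for all $a,b$ (via $(\phi_n)_*((-\cl_n)^bH^a)=\theta_b^a(n)$ and the level-$n$ relations) places it in $\Omega^\bot$ by Lemma \ref{omep}, and $\Omega\cap\Omega^\bot=\{0\}$ then forces it to vanish.

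The main obstacle is bookkeeping rather than any conceptual difficulty. The corrections $B_m(n)P_n^{[n]}$ and $C_m(n)P_n^{[n]}$ inject genuine $\theta^1(n)$- and $\theta^0(n)$-contributions that do not vanish against $P_n^{[n]}$, so the expansion produces a large number of degree-matched monomials; the antisymmetrization in $i\leftrightarrow j$ is exactly what forces the symmetric bulk to cancel, and verifying that the antisymmetric remainder reassembles into $\phi_n^*E_{i,j,k}(n)$ plus controlled applications of Lemmas \ref{pnt31} and \ref{pnt32} is the delicate part. This is presumably why, as for Lemma \ref{pnt32}, the full computation is deferred to the appendix. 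Once (\ref{g3tp1}) is established, the special case (\ref{g3tp2}) follows instantly by setting $k=0$.
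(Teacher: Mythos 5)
Your proposal follows essentially the same route as the paper's Appendix B proof: induction on $n$ with the base case killed by dimension reasons on $S^{[1]}=\p^2$, pullback of the level-$(n+1)$ identity to $\p(\mi_n)$ via $\psi_n$ using Proposition \ref{rrth} and (\ref{pnrn}), extraction of the level-$n$ identity times $(\cl_n+dH)$, and elimination of the correction terms by pushing forward against $\rho_n^*H^a\cl_n^b$ and invoking Lemma \ref{omep} together with Lemmas \ref{pnt2}, \ref{pnt31}, \ref{pnt32} and the antisymmetry in $i\leftrightarrow j$. The plan is sound and matches the paper; the only content you leave unexecuted is the explicit bookkeeping of the $A$, $B$, $C$ corrections and the boundary cases $i=2$, which is exactly the computation the paper carries out in the appendix.
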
\normalsize
The proof of Lemma \ref{pnt41} is in Appendix \ref{pl41}.  

\begin{proof}[Proof of Theorem \ref{main3} (3)] By (\ref{g3tp2}) and Lemma \ref{pnt31} (1) let $i=2,j=3$ we have
\begin{eqnarray}\label{g3tp3}\theta_{4}^0(n)P_{n}^{[n]}&=&P_n^{[n]}\left(\frac{3}{d}\big(\theta_2^0(n)\theta_{3}^1(n)-\theta_2^1(n)\theta_{3}^0(n)\big)+2\theta_{2}^1(n)\theta_2^1(n)\right)\nonumber\\
&=&P_n^{[n]}\left(-\frac{3}{d}\theta_2^1(n)\theta_{3}^0(n)+\left(\frac{3n}{d^2}+2\right)\theta_{2}^1(n)\theta_2^1(n)\right)\nonumber \end{eqnarray}
Hence
\begin{equation}\label{sp3tp}\gamma'_3(0)P_{n}^{[n]}=0.\end{equation}
Therefore we only need to show $\gamma'_3(0)P_{n-1}^{[n]}\not\in P_n^{[n]}\Gamma^2.$

Assume $\gamma'_3(0)P_{n-1}^{[n]}\in P_n^{[n]}\Gamma^2$. Since $\Gamma^2=\bq\xi\oplus\bq\gamma_2(0)$, $\exists \lambda\in\bq$ such that $\gamma'_3(0)P_{n-1}^{[n]}=2\lambda \gamma_2(0)P_n^{[n]}$.  Then we have
\begin{eqnarray}\label{g3nz}\left(\theta_4^0(n)+\frac3d\theta_2^1(n)\theta_3^0(n)-\left(\frac{3n}d+2\right)\theta_2^1(n)^2\right)P_{n-1}^{[n]}&=&\lambda 
\left(\left(d+\frac{2n}d\right)\theta_2^1(n)-\theta_3^0(n)\right)P_n^{[n]}\nonumber\\
&&-\frac1d(n-3d^2)\theta_2^1(n)P_n^{[n]}.\end{eqnarray}

We replace $n$ by $n+1$ and pull back (\ref{g3nz}) to $\p(\mi_n)$.  By a direct computation we have
\begin{eqnarray}\label{abc234}&&A_2(n)=\mathbf{1},B_2(n)=C_2(n)=0;\nonumber\\
&&A_3(n)=2(-\cl_n),B_3(n)=3\cdot\mathbf{1},C_3(n)=0;\nonumber\\
&&A_4(n)=3(-\cl_n)^2+\theta_2^2(n),B_4(n)=9(-\cl_n)+\theta_2^1(n),C_4(n)=(6-n)\cdot\mathbf{1}.\nonumber\\
\end{eqnarray}
Hence\small
\begin{eqnarray}&&LHS=\left(\theta_4^0(n+1)+\frac3d\theta_2^1(n+1)\theta_3^0(n+1)-\left(\frac{3(n+1)}d+2\right)\theta_2^1(n+1)^2\right)P_{n}^{[n+1]}\nonumber\\
&=&\left(\theta_4^0(n)+\frac3d\theta_2^1(n)\theta_3^0(n)-\left(\frac{3(n+1)}d+2\right)\theta_2^1(n)^2\right)P_{n-1}^{[n]}(\cl_n+dH)\nonumber\\&&+\left(\theta_4^0(n)+\frac3d\theta_2^1(n)\theta_3^0(n)-\left(\frac{3(n+1)}d+2\right)\theta_2^1(n)^2\right)P_{n}^{[n]}\nonumber\\
&&-\left(A_4(n)+\frac3d\theta_2^1(n)A_3(n)\right)(P_n^{[n]}+P_{n-1}^{[n]}(\cl_n+dH))\nonumber\\
&&-\left(B_4(n)+\frac3d\theta_2^1(n)B_3(n)-A_3(n)A_2(n)+\frac3d\theta_3^0(n)A_2(n)\right)H(P_n^{[n]}+P_{n-1}^{[n]}(\cl_n+dH))\nonumber\\
&&-2\left(\frac{3(n+1)}d+2\right)A_2(n)\theta_2^1(n)H(P_n^{[n]}+P_{n-1}^{[n]}(\cl_n+dH))\nonumber\\
&&-\left(C_4(n)-A_2(n)B_3(n)+\left(\frac{3(n+1)}d+2\right)A_2(n)^2\right)H^2(P_n^{[n]}+P_{n-1}^{[n]}(\cl_n+dH)).\nonumber\\\end{eqnarray}
By (\ref{abc234}) we have \begin{eqnarray}
&&LHS=\left(\theta_4^0(n)+\frac3d\theta_2^1(n)\theta_3^0(n)-\left(\frac{3(n+1)}d+2\right)\theta_2^1(n)^2\right)P_{n-1}^{[n]}(\cl_n+dH)\nonumber\\&&+\left(\theta_4^0(n)+\frac3d\theta_2^1(n)\theta_3^0(n)-\left(\frac{3(n+1)}d+2\right)\theta_2^1(n)^2\right)P_{n}^{[n]}\nonumber\\
&&-\left(3\cl_n^2+\theta_2^2(n)+\frac6d\theta_2^1(n)(-\cl_n)\right)(P_n^{[n]}+P_{n-1}^{[n]}(\cl_n+dH))\nonumber\\
&&-\left(7(-\cl_n)+\theta_2^1(n)+\frac9d\theta_2^1(n)+\frac3d\theta_3^0(n)\right)H(P_n^{[n]}+P_{n-1}^{[n]}(\cl_n+dH))\nonumber\\
&&-2\left(\frac{3(n+1)}d+2\right)\theta_2^1(n)H(P_n^{[n]}+P_{n-1}^{[n]}(\cl_n+dH))\nonumber\\
&&-\left(\frac{3(n+1)}d-n+5\right)H^2(P_n^{[n]}+P_{n-1}^{[n]}(\cl_n+dH)).\nonumber\\
\end{eqnarray}\normalsize
\begin{eqnarray}\label{g20p}&&RHS=\lambda 
\left(\left(d+\frac{2(n+1)}d\right)\theta_2^1(n+1)-\theta_3^0(n+1)\right)P_{n+1}^{[n+1]}\nonumber\\
&&-\frac1d(n-3d^2)\theta_2^1(n+1)P_{n+1}^{[n+1]}\nonumber\\
&=&\lambda\left(\left(d+\frac{2(n+1)}d\right)\theta_2^1(n)-\theta_3^0(n)\right)P_{n}^{[n]}(\cl_n+dH)\nonumber\\
&&-\lambda\left(\left(d+\frac{2(n+1)}d\right)H+2\cl_n-3H\right)P_{n}^{[n]}(\cl_n+dH)\nonumber\\
&&-\frac1d(n-3d^2)(\theta_2^1(n)-H)P_n^{[n]}(\cl_n+dH)\nonumber\\
%&=&\lambda\left(\left(d+\frac{2(n+1)}d\right)\theta_2^1(n)-\theta_3^0(n)\right)P_{n}^{[n]}(\cl_n+dH)\nonumber\\
%&&+\lambda\left(-2\cl_n+\left(-d-\frac{2(n+1)}d+3\right)H\right)P_{n}^{[n]}(\cl_n+dH)\nonumber\\
%&&-\frac1d(n-3d^2)(\theta_2^1(n)-H)P_n^{[n]}(\cl_n+dH),
\end{eqnarray}
%where the last equality is because $H^2P_n^{[n]}(\cl_n+dH)=0$ by Lemma \ref{pnt2}.

Then we must have for any $t,j\geq0$
\[(\phi_n)_*(LHS\cdot(-\cl_n)^tH^j)=(\phi_n)_*(RHS\cdot(-\cl_n)^tH^j).\]
Let $t=0,j=0$.  Then we have
\begin{eqnarray}\label{cm4lr}
(\phi_n)_*(LHS)&=&-(8d+6n)(\frac{1}d+1)\theta_2^1(n)P_{n-1}^{[n]}+(4n-\frac{3(n+1)}d-5)P_n^{[n]};\nonumber\\
&=&\big(\lambda(-d^2-2+3d)+(n-3d^2)\big)P_n^{[n]}=(\phi_n)_*(RHS).\nonumber\\
\end{eqnarray}
By (\ref{cm4lr}) we have $\theta_2^1(n)P_{n-1}^{[n]}=\lambda'P_n^{[n]}$ for some $\lambda'\in \bq$.  Then by Lemma \ref{pnt2} (1) (2) we must have
\begin{equation}0=\lambda'\theta_2^2(n)P_n^{[n]}=\theta_2^2(n)\theta_2^1(n)P_{n-1}^{[n]}=\frac1d\theta_2^1(n)^2P_n^{[n]},\end{equation}
which contradicts to Lemma \ref{inp13} (1) for $n\geq 2$.  Therefore $\gamma_3(0)$ is of almost perversity 3.
\end{proof}

%%%%%%%%%%%%%%%%%                       %%%%%%%%%%%%%%%%
%%%%%%%%%%%%%%%%%.   Appendix.   %%%%%%%%%%%%%%
\appendix
\section{The proof of Lemma \ref{pnt32}.}\label{pl32}
Recall that we want to prove that in $H^*(S^{[n]},\bq)$ for all $i,j,i+k,j+k\geq2$
\begin{eqnarray}\label{Ag22tp1}&&\left(\frac{j-1}{d}\theta_i^1(n)\theta_{j+k}^1(n)+\frac{i-1}{d}\theta_j^1(n)\theta_{i+k}^1(n)+(i+j-2)\theta_{i+j+k-1}^1(n)\right)P_{n-1}^{[n]}\nonumber\\
&=&\frac{2d-1}{d(d-2)}P_n^{[n]}\left[\frac{j-1}{d}\big(\theta_i^0(n)\theta_{j+k}^1(n)+\theta_i^1(n)\theta_{j+k}^0(n)\big)+\right.\nonumber\\&&\left.+\frac{i-1}{d}\big(\theta_j^0(n)\theta_{i+k}^1(n)+\theta_{j}^1(n)\theta_{i+k}^0(n)\big)+(i+j-2)\theta_{i+j+k-1}^0(n)\right]\nonumber\\
&&+\frac3{d-2}(i+j-2)\theta_{i+j-2+k}^1(n)P_n^{[n]}\nonumber\\
&&-\frac3{d-2}(j-1)\big(\theta_i^1(n)\theta_{j+k-2}^2(n)+\theta_{i-2}^2(n)\theta_{j+k}^1(n)\big)P_n^{[n]}\nonumber\\
&&-\frac3{d-2}(i-1)\big(\theta_j^1(n)\theta_{i+k-2}^2(n)+\theta_{j-2}^2(n)\theta_{i+k}^1(n)\big)P_n^{[n]}\nonumber\\
%&&-\frac3{d-2}(j-1)\big(\theta_{i+1}^1(n)\theta_{j+k-3}^2(n)+\theta_{i-3}^2(n)\theta_{j+k+1}^1(n)\big)P_n^{[n]}\nonumber\\
%&&-\frac3{d-2}(i-1)\big(\theta_{j+1}^1(n)\theta_{i+k-3}^2(n)+\theta_{j-3}^2(n)\theta_{i+k+1}^1(n)\big)P_n^{[n]}.\nonumber\\
\end{eqnarray}
In particular let $i,j\geq2,k=0$, then we have
\begin{eqnarray}\label{Ag22tp2}&&\left(\frac1d\theta_i^1(n)\theta_j^1(n)+\theta_{i+j-1}^1(n)\right)P_{n-1}^{[n]}
\nonumber\\&=&\frac{2d-1}{d(d-2)}\left(\frac1d(\theta_i^0\theta_j^1(n)+\theta_j^0(n)\theta_i^1(n))+\theta_{i+j-1}^0(n)\right)P_n^{[n]}\nonumber\\
&&+\frac3{d-2}\theta_{i+j-2}^1(n)P_n^{[n]}-\frac3{d-2}\big(\theta_i^1(n)\theta_{j-2}^2(n)+\theta_{i-2}^2(n)\theta_{j}^1(n)\big)P_n^{[n]}\nonumber\\%&&-\frac3{d-2}\big(\theta_{i+1}^1(n)\theta_{j-3}^2(n)+\theta_{i-3}^2(n)\theta_{j+1}^1(n)\big)P_n^{[n]}\nonumber\\
\end{eqnarray}

We do induction on $n$.  When $n=1$, by dimension reason we only need to check (\ref{g22tp1}) for $i=j=2,k=0$.  Since $\theta_2^0(1)=-\mathbf{1}$, $\theta_2^1(1)=-H$, $\theta_3^0(1)=-3H$, $\theta_3^1(1)=-3H^2$, $P_0^{[1]}=\mathbf{1}$ and $P_1^{[1]}=dH$, we have for (\ref{g22tp1})
\begin{eqnarray}RHS&=&\left(\frac{2d-1}{(d-2)}\left(\frac{4}{d}-6\right)+\frac{6d}{d-2}\right)H^2\nonumber\\
&=&\frac{4(2d-1)-6d(2d-1)+6d^2}{d(d-2)}H^2=\frac{-6d^2+14d-4}{d(d-2)}H^2\nonumber\\
&=&\frac{-2(3d-1)(d-2)}{d(d-2)}H^2=2\left(\frac1d-3\right)H^2=LHS.\nonumber\end{eqnarray}

Now we assume that (\ref{Ag22tp1}) holds for $n$.  We will compare the pullbacks of LHS and RHS of (\ref{Ag22tp1}) to $\p(\mi_n)$ via $\psi_n:\p(\mi_n)\ra S^{[n+1]}$.
\subsection{The computation on the LHS}  We first deal with LHS of (\ref{Ag22tp1}).  On $\p(\mi_n)$ we have \footnotesize
\begin{eqnarray}&&\left(\frac{j-1}{d}\theta_i^1(n+1)\theta_{j+k}^1(n+1)+\frac{i-1}{d}\theta_j^1(n+1)\theta_{i+k}^1(n+1)+(i+j-2)\theta_{i+j+k-1}^1(n+1)\right)P_{n}^{[n+1]}\nonumber\\
&=&\left[\frac{j-1}{d}(\theta_i^1(n)-A_i(n)H-B_i(n)H^2)(\theta_{j+k}^1(n)-A_{j+k}(n)H-B_{j+k}(n)H^2)\right.\nonumber\\
&&+\frac{i-1}{d}(\theta_j^1(n)-A_{j}(n)H-B_{j}(n)H^2)(\theta_{i+k}^1(n)-A_{i+k}(n)H-B_{i+k}(n)H^2)\nonumber\\
&&\left.+(i+j-2)(\theta_{i+j+k-1}^1(n)-A_{i+j+k-1}(n)H-B_{i+j+k-1}(n)H^2)\right](P_{n}^{[n]}+P_{n-1}^{[n]}(\cl_n+dH))
\nonumber\\&=&\left(\frac{j-1}{d}\theta_i^1(n)\theta_{j+k}^1(n)+\frac{i-1}{d}\theta_j^1(n)\theta_{i+k}^1(n)+(i+j-2)\theta_{i+j+k-1}^1(n)\right)P_{n-1}^{[n]}(\cl_n+dH)+I+II,\nonumber
\end{eqnarray}
\normalsize
where 
\begin{eqnarray}&&I:=\left[\frac{j-1}{d}(\theta_i^1(n)-A_i(n)H-B_i(n)H^2)(\theta_{j+k}^1(n)-A_{j+k}(n)H-B_{j+k}(n)H^2)\right.\nonumber\\
&&+\frac{i-1}{d}(\theta_j^1(n)-A_{j}(n)H-B_{j}(n)H^2)(\theta_{i+k}^1(n)-A_{i+k}(n)H-B_{i+k}(n)H^2)\nonumber\\
&&\left.+(i+j-2)(\theta_{i+j+k-1}^1(n)-A_{i+j+k-1}(n)H-B_{i+j+k-1}(n)H^2)\right]P_{n-1}^{[n]}(\cl_n+dH)\nonumber\\
&&-\left(\frac{j-1}{d}\theta_i^1(n)\theta_{j+k}^1(n)+\frac{i-1}{d}\theta_j^1(n)\theta_{i+k}^1(n)+(i+j-2)\theta_{i+j+k-1}^1(n)\right)P_{n-1}^{[n]}(\cl_n+dH),\nonumber\end{eqnarray}
and
\begin{eqnarray}II&:=&\left[\frac{j-1}{d}(\theta_i^1(n)-A_i(n)H-B_i(n)H^2)(\theta_{j+k}^1(n)-A_{j+k}(n)H-B_{j+k}(n)H^2)\right.\nonumber\\
&&+\frac{i-1}{d}(\theta_j^1(n)-A_{j}(n)H-B_{j}(n)H^2)(\theta_{i+k}^1(n)-A_{i+k}(n)H-B_{i+k}(n)H^2)\nonumber\\
&&\left.+(i+j-2)(\theta_{i+j+k-1}^1(n)-A_{i+j+k-1}(n)H-B_{i+j+k-1}(n)H^2)\right]P_{n}^{[n]}.\nonumber\end{eqnarray}

By Lemma \ref{pnt2} and a direct computation we have\small
\begin{eqnarray}-I&=&\frac{j-1}{d}\big(\theta_i^1(n)A_{j+k}+\theta_{j+k}^1(n)A_i(n)+dA_{i+j+k-1}(n)\big)HP_{n-1}^{[n]}(\cl_n+dH)\nonumber\\
&&+\frac{i-1}{d}\big(\theta_j^1(n)A_{i+k}+\theta_{i+k}^1(n)A_j(n)+dA_{i+j+k-1}(n)\big)HP_{n-1}^{[n]}(\cl_n+dH)\nonumber\\
&&+\frac{j-1}{d^2}\big(\theta_i^1(n)B_{j+k}(n)+\theta_{j+k}^1(n)B_i(n)+dB_{i+j+k-1}(n)-A_{j+k}(n)A_{i}(n)\big)P_{n}^{[n]}H\cl_n\nonumber\\
&&+\frac{i-1}{d^2}\big(\theta_j^1(n)B_{i+k}(n)+\theta_{i+k}^1(n)B_i(n)+dB_{i+j+k-1}(n)-A_{i+k}(n)A_{j}(n)\big)P_{n}^{[n]}H\cl_n\nonumber\\
\end{eqnarray}\normalsize
By Lemma \ref{pnt31} (1) and a direct computation we have\small
\begin{eqnarray}-II&=&\frac{j-1}{d}(\theta_{j+k}^1(n)A_i(n)+\theta_i^1(n)A_{j+k}(n)+dA_{i+j+k-1}(n))HP_n^{[n]}\nonumber\\
&&+\frac{i-1}{d}(\theta_{i+k}^1(n)A_j(n)+\theta_j^1(n)A_{i+k}(n)+dA_{i+j+k-1}(n))HP_n^{[n]}\nonumber\\
&&+\frac{j-1}{d}(\theta_{j+k}^1(n)B_i(n)+\theta_i^1(n)B_{j+k}(n)+dB_{i+j+k-1}(n)-A_i(n)A_{j+k}(n))H^2P_n^{[n]}\nonumber\\
&&+\frac{i-1}{d}(\theta_{i+k}^1(n)B_j(n)+\theta_j^1(n)B_{i+k}(n)+dB_{i+j+k-1}(n)-A_j(n)A_{i+k}(n))H^2P_n^{[n]}.\nonumber\end{eqnarray}

\normalsize
By (\ref{defabc}) and Lemma \ref{pnt2} for $i\geq 2$ we have
\begin{equation}\label{bnpn}B_i(n)P_{n}^{[n]}=3\binom{i-1}{2}(-\cl_n)^{i-3}P_n^{[n]}+\sum_{k=0}^{i-4}\theta_{i-2-k}^1(n)(-\cl_n)^k(k+1)P_n^{[n]},\end{equation}
\begin{equation}\label{anpn1}A_i(n)P_{n-1}^{[n]}=(-\cl_n)^{i-2}(i-1)P_{n-1}^{[n]}+\frac1d\sum_{k=0}^{i-4}\theta_{i-2-k}^1(n)(-\cl_n)^k(k+1)P_n^{[n]}.\end{equation}
By Lemma \ref{pnt2} and Lemma \ref{pnt31}, we have on $\p(\mi_n)$
\begin{equation}\label{svopn}H^2P_n^{[n]}(\cl_n+dH)=0,~~\theta_i^1(n)HP_n^{[n]}(\cl_n+dH)=d(-\cl_n)^iHP_n^{[n]}\text{ for }i\geq 2\end{equation}
%Let $\delta_{k}(i):=\begin{cases}0,\text{ if } i\leq k\\ 1,\text{ if }i>k\end{cases}$.  
Hence by Lemma \ref{pnt31} we have 
\begin{eqnarray}\label{bnpnh}&&B_i(n)HP_{n}^{[n]}(\cl_n+dH)\nonumber\\
&=&3\binom{i-1}{2}(-\cl_n)^{i-3}HP_n^{[n]}(\cl_n+dH)+d\binom{i-2}2(-\cl_n)^{i-2}HP_n^{[n]},\nonumber\\\end{eqnarray}
\begin{eqnarray}\label{anpn1h}&&A_i(n)HP_{n-1}^{[n]}(\cl_n+dH)\nonumber\\
&=&(-\cl_n)^{i-2}(i-1)HP_{n-1}^{[n]}(\cl_n+dH)+\binom{i-2}2(-\cl_n)^{i-2}HP_n^{[n]}.\nonumber\\\end{eqnarray}
%and
%\begin{eqnarray}\label{bnpn1}B_i(n)H^2P_{n-1}^{[n]}&=&\theta_{i-2}^1(n)H^2P_{n-1}^{[n]}+\frac3d\theta_{i-3}^2(n)H^2P_{n-1}^{[n]}+2\theta_{i-3}^1(n)H^2P_n^{[n]}\nonumber\\&&+\left(\frac3d\binom{i-1}{2}-\frac{(i-1)(i-4)}{2}\right)(-\cl_n)^{i-3}HP_n^{[n]}.\end{eqnarray}

%Notice that we always have $\delta_k(i)(\cl_n)^{i-k'}H^2P_n^{[n]}=0$ for any $k'\leq k$.  Define 
%\begin{equation}2\ts(i;s):=\delta_3(i+s-1)\cdot(i+s-2)(i+s-5)-\delta_3(i)\cdot(i-1)(i-4)-\delta_3(s)\cdot(s-1)(s-4).
%\end{equation}
%We have
%\begin{equation}\label{casets}\ts(i;s)=\begin{cases}(i-1)(s-1),\text{ for }i,s\geq 4\\ (s-2),~~\qquad\text{ for }i=2,3,s\geq 4
%\\2,\qquad\qquad\text{ for }i=3,s=3\\0,\qquad\qquad\text{ for }i=2,s=2,3\end{cases}
%\end{equation}
By (\ref{anpn1h}) and (\ref{svopn}) we have
\begin{eqnarray}\label{anpn2}&&\big(\theta_i^1(n)A_{j+k}(n)H+\theta_{j+k}^1(n)A_i(n)H+dA_{i+j+k-1}(n)H\big)P_{n-1}^{[n]}(\cl_n+dH)\nonumber\\
&=&(\theta_i^1(n)(-\cl_n)^{j+k-2}(j+k-1)H+\theta_{j+k}^1(n)(-\cl_n)^{i-2}(i-1)H)P_{n-1}^{[n]}(\cl_n+dH)\nonumber\\
&&+d(-\cl_n)^{i+j+k-3}(i+j+k-2)HP_{n-1}^{[n]}(\cl_n+dH)\nonumber\\
&&+d\big((i-1)(j+k-1)-1\big)(-\cl_n)^{i+j+k-3}HP_n^{[n]}\nonumber\\
%&&+\frac1d\big(\theta_i^1(n)\theta_{j+k-2}^1(n)H+\theta_{i-2}^1(n)\theta_{j+k}^1(n)H+d\theta_{i+j+k-3}^1(n)H\big)P_n^{[n]}(\cl_n+dH)\nonumber\\
%&&+d\ts(i;j+k)(-\cl_n)^{i+j+k-3}HP_n^{[n]}\nonumber\\
%&&-\frac d2\big(\delta_3(i)(i-1)(i-4)-\delta_3(j)(j-1)(j-4)\big)(-\cl_n)^{i+j+k-3}HP_n^{[n]}\nonumber\\
%&&-d^2\frac{(i+j+k-2)(i+j+k-5)}2(-\cl_n)^{i+j+k-4}H^2P_n^{[n]}.\nonumber\\
\end{eqnarray}
Notice that for $s,t\geq 2$ we always have
\begin{equation}\label{hp23}\theta_s(n)\binom{t-2}{2}(-\cl_n)^{t-2}HP_n^{[n]}=-\binom{t-2}{2}(-\cl_n)^{t+s-3}HP_n^{[n]},\end{equation}
since they are equal by (\ref{svopn}) for $t\neq 2,3$ and both zero for $t=2,3$.

By (\ref{anpn}) we have
\begin{eqnarray}\label{aanpn1}A_{j+k}(n)A_i(n)HP_{n}^{[n]}(\cl_n+dH)=(i-1)(j+k-1)(-\cl_n)^{i+j+k-4}HP_{n}^{[n]}(\cl_n+dH).\nonumber\\
\end{eqnarray}
By (\ref{bnpnh}), (\ref{svopn}) and (\ref{hp23}) we have
\begin{eqnarray}\label{bnpn2}&&\quad\big(\theta_i^1(n)B_{j+k}(n)+\theta_{j+k}^1(n)B_i(n)+dB_{i+j+k-1}(n)\big)HP_{n}^{[n]}(\cl_n+dH)\nonumber\\
=&&(-3d(i-1)(j+k-1)+3d^2\binom{i+j+k-2}2(-\cl_n)^{i+j+k-4}H^2P_{n}^{[n]}%+d^2\ts(i;j+k))(-\cl_n)^{i+j+k-3}HP_n^{[n]}
\nonumber\\
&&+d^2\big((i-1)(j+k-1)-1\big)(-\cl_n)^{i+j+k-3}HP_n^{[n]}\nonumber\\
%&&+\big(\theta_i^1(n)\theta_{s-2}^1(n)+\theta_{i-2}^1(n)\theta_{s}^1(n)+d\theta_{i+s-3}^1(n)\big)HP_n^{[n]}(\cl_n+dH)\nonumber\\
\end{eqnarray}
Define 
\begin{eqnarray}\label{defefg}\mf(i;s)&:=&\big(\theta_i^1(n)A_{j+k}(n)H+\theta_{j+k}^1(n)A_i(n)H+dA_{i+j+k-1}(n)H\big)P_{n-1}^{[n]}(\cl_n+dH)\nonumber\\
&&-d\big((i-1)(j+k-1)-1\big)(-\cl_n)^{i+j+k-3}HP_n^{[n]}\nonumber\\
&=&\big(\theta_i^1(n)(-\cl_n)^{s-2}(s-1)+\theta_{s}^1(n)(-\cl_n)^{i-2}(i-1)\big)HP_{n-1}^{[n]}(\cl_n+dH)\nonumber\\
&&+d(-\cl_n)^{i+s-3}(i+s-2)HP_{n-1}^{[n]}(\cl_n+dH);\nonumber\\
\mh(i;s)&:=&\big(\theta_i^1(n)A_s(n)+\theta_s^1(n)A_i(n)+dA_{i+s-1}(n)\big)HP_n^{[n]}\nonumber\\
&=&\big(\theta_i^1(n)(-\cl_n)^{s-2}(s-1)+\theta_{s}^1(n)(-\cl_n)^{i-2}(i-1)\big)HP_{n}^{[n]}\nonumber\\
&&+d(-\cl_n)^{i+s-3}(i+s-2)HP_{n}^{[n]}.\nonumber
%\\ \mg(i;s)&:=&\frac1d\big(\theta_i^1(n)\theta_{s-2}^1(n)+\theta_{i-2}^1(n)\theta_{s}^1(n)+d\theta_{i+s-3}^1(n)\big)HP_n^{[n]}(\cl_n+dH)\nonumber\\
\end{eqnarray}
%By (\ref{svopn}) we have 
%\begin{equation}\label{casemg}\mg(i;s)=\begin{cases}-d(-\cl_n)^{i+s-3}HP_n^{[n]},\qquad\text{ for }i,s\geq 4\\0,\qquad\qquad\qquad\qquad\qquad\text{ for }i=2,3,s\geq 4
%\\d(-\cl_n)^{i+s-3}HP_n^{[n]},~~\qquad\text{ for }i=2,3,s=3\\0,\qquad\qquad\qquad\qquad\qquad\text{ for }i=2,s=2\end{cases}
%\end{equation}
%Combine (\ref{casets}) and (\ref{casemg}) and we have for all $i,s\geq 2$
%\begin{equation}\label{tspmg}\frac1d\mg(i;s)+\ts(i;s)(-\cl_n)^{i+s-3}HP_n^{[n]}=((i-1)(s-1)-1)(-\cl_n)^{i+s-3}HP_n^{[n]}\end{equation}

By (\ref{anpn}) (\ref{anpn2}), (\ref{aanpn1}) and (\ref{bnpn2}) we have
\small
%\begin{eqnarray}-I-II&=&\frac{j-1}d(\mf(i;j+k)+\mh(i;j+k)+2\mg(i;j+k))\nonumber\\
%&&+\frac{i-1}d(\mf(j;i+k)+\mh(j;i+k)+2\mg(j;i+k))\nonumber\\
%&&+(i-1)(j-1)(i+j+2k-2)\left(-\frac 3d+\frac1{d^2}\right)(-\cl_n)^{i+j+k-3}HP_{n}^{[n]}
%\nonumber\\&&+2((j-1)\ts(i;j+k)+(i-1)\ts(j;i+k))(-\cl_n)^{i+j+k-3}HP_{n}^{[n]}
%\nonumber\\&&+3(i+j-2)\binom{i+j+k-2}2(-\cl_n)^{i+j+k-4}H^2P_{n}^{[n]}\nonumber\\
%&-&d(i+j-2)(i+j+k-2)(i+j+k-5)(-\cl_n)^{i+j+k-4}H^2P_{n}^{[n]}\nonumber\\
%&&-\frac1d(i-1)(j-1)(i+j+2k-2)(-\cl_n)^{i+j+k-4}H^2P_{n}^{[n]}.\nonumber
%\end{eqnarray}
%Then by (\ref{tspmg}) we have
\begin{eqnarray}\label{ipii}
-I-II&=&\frac{j-1}d(\mf(i;j+k)+\mh(i;j+k))+\frac{i-1}d(\mf(j;i+k)+\mh(j;i+k))\nonumber\\
&&+(i-1)(j-1)(i+j+2k-2)\left(2-\frac 3d+\frac1{d^2}\right)(-\cl_n)^{i+j+k-3}HP_{n}^{[n]}
\nonumber\\&&-2(i+j-2)(-\cl_n)^{i+j+k-3}HP_{n}^{[n]}
\nonumber\\&&+3(i+j-2)\binom{i+j+k-2}2(-\cl_n)^{i+j+k-4}H^2P_{n}^{[n]}\nonumber\\
%&-&d(i+j-2)(i+j+k-2)(i+j+k-5)(-\cl_n)^{i+j+k-4}H^2P_{n}^{[n]}\nonumber\\
&&-\frac1d(i-1)(j-1)(i+j+2k-2)(-\cl_n)^{i+j+k-4}H^2P_{n}^{[n]}.\nonumber\\
\end{eqnarray}
\normalsize
%%%%%%%%%.   Appendix A subsection 2     %%%
\subsection{The computation on the RHS}  Now we deal with the RHS of (\ref{Ag22tp1}).  On $\p(\mi_n)$ by (\ref{svopn}) we have
\begin{eqnarray}&&P_{n+1}^{[n+1]}\left[\frac{j-1}{d}\big(\theta_i^0(n+1)\theta_{j+k}^1(n+1)+\theta_i^1(n+1)\theta_{j+k}^0(n+1)\big)+\right.\nonumber\\&&\left.+\frac{i-1}{d}\big(\theta_j^0(n+1)\theta_{i+k}^1(n+1)+\theta_{j}^1(n+1)\theta_{i+k}^0(n+1)\big)+(i+j-2)\theta_{i+j+k-1}^0(n+1)\right]\nonumber\\
&=&P_{n}^{[n]}(\cl_n+dH)\left[\frac{j-1}{d}\big(\theta_i^0(n)\theta_{j+k}^1(n)+\theta_i^1(n)\theta_{j+k}^0(n)\big)+\right.\nonumber\\&&\left.+\frac{i-1}{d}\big(\theta_j^0(n)\theta_{i+k}^1(n)+\theta_{j}^1(n)\theta_{i+k}^0(n)\big)+(i+j-2)\theta_{i+j+k-1}^0(n)\right]+III,\nonumber\\
\end{eqnarray}
where
\begin{eqnarray}
&&-III:=P_n^{[n]}(\cl_n+dH)\left[\frac{j-1}{d}\big(A_i(n)\theta_{j+k}^1(n)+\theta_i^1(n)A_{j+k}(n)+dA_{i+j+k-1}(n)\big)+\right.\nonumber\\&&\left.+\frac{i-1}{d}\big(A_j(n)\theta_{i+k}^1(n)+\theta_{j}^1(n)A_{i+k}(n)+dA_{i+j+k-1}(n)\big)\right]\nonumber\\
&&+P_n^{[n]}(\cl_n+dH)H\left[\frac{j-1}{d}\big(A_{j+k}(n)\theta_i^0(n)+A_i(n)\theta_{j+k}^0(n)-2A_i(n)A_{j+k}(n)\big)
\right.\nonumber\\ &&+\frac{i-1}{d}\big(A_{i+k}(n)\theta_j^0(n)+A_j(n)\theta_{i+k}^0(n)-2A_j(n)A_{i+k}(n)\big)\nonumber\\ &&+\frac{j-1}{d}\big(B_i(n)\theta_{j+k}^1(n)+B_{j+k}(n)\theta_i^1(n)+dB_{i+j+k-1}(n)\big)
\nonumber\\&&\left.\frac{i-1}{d}\big(B_j(n)\theta_{i+k}^1(n)+B_{i+k}(n)\theta_j^1(n)+dB_{i+j+k-1}(n)\big)\right]\nonumber\\
\end{eqnarray}
Define
\begin{eqnarray}\label{defjk}\mj(i;s)&:=&(\theta_i^1(n)A_s(n)+\theta_{s}^1(n)A_i(n)+dA_{i+s-1}(n))P_{n}^{[n]}(\cl_n+dH)\nonumber\\
&=&(\theta_i^1(n)(-\cl_n)^{s-2}(s-1)+\theta_{s}^1(n)(-\cl_n)^{i-2}(i-1))P_{n}^{[n]}(\cl_n+dH)\nonumber\\
&&+d(-\cl_n)^{i+s-3}(i+s-2)P_{n}^{[n]}(\cl_n+dH);\nonumber\\
\mk(i;s)&:=&\big(\theta_i^0(n)A_s(n)+\theta_{s}^0(n)A_i(n)\big)HP_{n}^{[n]}(\cl_n+dH)\nonumber\\
&=&\big(\theta_i^0(n)(-\cl_n)^{s-2}(s-1)+\theta_{s}^0(n)(-\cl_n)^{i-2}(i-1)\big)HP_{n}^{[n]}(\cl_n+dH).\nonumber\\
\end{eqnarray}
By (\ref{anpn}) and (\ref{bnpn2}) we have
%\begin{eqnarray}
%&&-III=\frac{j-1}d(\mj(i;j+k)+\mk(i;j+k)+d\mg(i;j+k))\nonumber\\
%&+&\frac{i-1}d(\mj(j;i+k)+\mk(j;i+k)+d\mg(j;i+k))\nonumber\\
%&+&\frac1d(i-1)(j-1)(i+j+2k-2)\left(-3d+2\right)(-\cl_n)^{i+j+k-3}HP_{n}^{[n]}
%\nonumber\\&+&d((j-1)\ts(i;j+k)+(i-1)\ts(j;i+k))(-\cl_n)^{i+j+k-3}HP_{n}^{[n]}
%\nonumber\\&+&3d(i+j-2)\binom{i+j+k-2}2(-\cl_n)^{i+j+k-4}H^2P_{n}^{[n]}\nonumber\\
%&-&\frac{d^2(i+j-2)(i+j+k-2)(i+j+k-5)}2(-\cl_n)^{i+j+k-4}H^2P_{n}^{[n]}\nonumber\\
%&-&2(j-1)(i-1)(j+i+2k-2)(-\cl_n)^{i+j+k-4}H^2P_{n}^{[n]}.\nonumber\\ \end{eqnarray}

%Again by (\ref{tspmg}) we have
\begin{eqnarray}\label{iii}
-III&=&\frac{j-1}d(\mj(i;j+k)+\mk(i;j+k))+\frac{i-1}d(\mj(j;i+k)+\mk(j;i+k))\nonumber\\
&+&\frac1d(i-1)(j-1)(i+j+2k-2)\left(d^2-3d+2\right)(-\cl_n)^{i+j+k-3}HP_{n}^{[n]}
\nonumber\\&-&d(i+j-2)(-\cl_n)^{i+j+k-3}HP_{n}^{[n]}
\nonumber\\&+&3d(i+j-2)\binom{i+j+k-2}2(-\cl_n)^{i+j+k-4}H^2P_{n}^{[n]}\nonumber\\
%&-&\frac{d^2(i+j-2)(i+j+k-2)(i+j+k-5)}2(-\cl_n)^{i+j+k-4}H^2P_{n}^{[n]}\nonumber\\
&-&2(j-1)(i-1)(j+i+2k-2)(-\cl_n)^{i+j+k-4}H^2P_{n}^{[n]}.\nonumber\\ \end{eqnarray}

For the rest terms on RHS, by (\ref{anpn}) and (\ref{svopn}) we have on $\p(\mi_n)$
\begin{eqnarray}\label{strhs}&&(i+j-2)\theta_{i+j+k-2}^1(n+1)P_{n+1}^{[n+1]}\nonumber\\
&=&(i+j-2)\theta_{i+j+k-2}^1(n)P_{n}^{[n]}(\cl_n+dH)\nonumber\\
&&-(i+j-2)(i+j+k-3)(-\cl_n)^{i+j+k-4}HP_{n}^{[n]}(\cl_n+dH)\nonumber\\
\end{eqnarray}
and
\begin{eqnarray}\label{strhs2}&&\big(\theta_i^1(n+1)\theta_{j+k-2}^2(n+1)+\theta_{i-2}^2(n+1)\theta_{j+k}^1(n+1)\big)P_{n+1}^{[n+1]}\nonumber\\
&=&(\theta_i^1(n)\theta_{j+k-2}^2(n)+\theta_{i-2}^2(n)\theta_{j+k}^1(n))P_{n}^{[n]}(\cl_n+dH)+\me(i;j+k),\nonumber\\
\end{eqnarray}
%\begin{eqnarray}\label{strhs3}&&\big(\theta_{i+1}^1(n+1)\theta_{j+k-3}^2(n+1)+\theta_{i-3}^2(n+1)\theta_{j+k+1}^1(n+1)\big)P_{n+1}^{[n+1]}\nonumber\\
%&=&(\theta_{i+1}^1(n)\theta_{j+k-3}^2(n)+\theta_{i-3}^2(n)\theta_{j+k+1}^1(n))P_{n}^{[n]}(\cl_n+dH)+\me_2(i;j+k),\nonumber\\
%\end{eqnarray}
where for $i,s\geq 2$
\begin{eqnarray}
\me(i;s)&:=&\big(\theta_{s-2}^2(n)(-\cl)^{i-1}(i-1)+\theta_{i-2}^2(n)(-\cl)^{s-1}(s-1)\big)HP_n^{[n]}\nonumber\\
&&-d\big(\theta_{s-2}^2(n)(-\cl_n)^{i-2}(i-1)+\theta_{i-2}^2(n)(-\cl_n)^{s-2}(s-1)\big)H^2P_n^{[n]};\nonumber\\
%\me_2(i;s)&:=&\big(\theta_{s-3}^2(n)(-\cl)^{i}i+\theta_{i-3}^2(n)(-\cl)^{s}s\big)HP_n^{[n]}.\nonumber
%&&-d\big(\theta_{s-2}^2(n)(-\cl_n)^{i-2}(i-1)+\theta_{i-2}^2(n)(-\cl_n)^{s-2}(s-1)\big)H^2P_n^{[n]}.\nonumber
\end{eqnarray}
\begin{lemma}\label{meeq}For $i,s\geq 2$ we have
\[(\theta_{s-2}^2(n)(-\cl_n)^{i-2}(i-1)+\theta_{i-2}^2(n)(-\cl_n)^{s-2}(s-1))H^2P_n^{[n]}=2(i+s-3)(-\cl)^{i+s-4}H^2P_n^{[n]}.\]
\end{lemma}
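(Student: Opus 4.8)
The plan is to reduce the entire identity to the two vanishing statements already in hand from Lemma \ref{pnt2}, namely $\theta_a^2(n)P_n^{[n]}=0$ for all $a\geq 1$ (part (1), pulled back to $\p(\mi_n)$ via $\phi_n$, using our convention of writing a class and its pullback with the same letter) together with $H^2\cl_n P_n^{[n]}=0$ (part (4), already stated on $\p(\mi_n)$). From these I would first isolate a single vanishing principle: for all integers $a,b\geq 0$,
\[\theta_a^2(n)(-\cl_n)^bH^2P_n^{[n]}=0\qquad\text{whenever }a\geq 1\text{ or }b\geq 1.\]
Indeed, if $a\geq 1$ the factor $\theta_a^2(n)P_n^{[n]}$ already vanishes, while if $b\geq 1$ the product contains $\cl_n H^2P_n^{[n]}=(-1)\,H^2\cl_nP_n^{[n]}=0$; and in the single remaining case $a=b=0$ the convention $\theta_0^2(n)=\mathbf 1$ and $(-\cl_n)^0=\mathbf 1$ leaves exactly $H^2P_n^{[n]}$.

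Next I would apply this principle termwise to the left-hand side, whose two summands are $\theta_{s-2}^2(n)(-\cl_n)^{i-2}(i-1)H^2P_n^{[n]}$ and $\theta_{i-2}^2(n)(-\cl_n)^{s-2}(s-1)H^2P_n^{[n]}$. By the principle the first is nonzero only when $s-2=0$ and $i-2=0$, i.e.\ $i=s=2$, where it equals $(i-1)H^2P_n^{[n]}=H^2P_n^{[n]}$; symmetrically the second survives only for $i=s=2$, where it equals $H^2P_n^{[n]}$. Hence the left-hand side is $2H^2P_n^{[n]}$ when $i=s=2$ and is $0$ otherwise.

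For the right-hand side I would invoke the same principle with $a=0$: $(-\cl_n)^{i+s-4}H^2P_n^{[n]}=0$ as soon as $i+s-4\geq 1$. Since $i,s\geq 2$, that exponent is positive precisely when $(i,s)\neq(2,2)$, so the right-hand side vanishes on that range; and when $i=s=2$ it equals $2(i+s-3)(-\cl_n)^0H^2P_n^{[n]}=2H^2P_n^{[n]}$. Comparing the two cases matches the left-hand side in each, which proves the lemma.

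There is essentially no hard step here, as this is a direct corollary of Lemma \ref{pnt2}; the only point requiring care is the bookkeeping of the boundary case $i=s=2$, where one must correctly read off the shift conventions $\theta_0^2(n)=\mathbf 1$ and $(-\cl_n)^0=\mathbf 1$, and then observe that for every other admissible pair $(i,s)$ at least one factor of $\cl_n$ (multiplying $H^2P_n^{[n]}$) or one $\theta_a^2(n)$ with $a\geq 1$ is necessarily present, forcing both sides to zero simultaneously.
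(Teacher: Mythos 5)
Your proof is correct and follows essentially the same route as the paper, whose entire argument is the one-line observation that both sides vanish unless $i=s=2$, in which case both equal $2H^2P_n^{[n]}$. You simply make explicit the two vanishing inputs from Lemma \ref{pnt2} (namely $\theta_a^2(n)P_n^{[n]}=0$ for $a\geq 1$ and $H^2\cl_nP_n^{[n]}=0$) that the paper leaves implicit, and the bookkeeping at the boundary case $i=s=2$ is handled correctly.
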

Therefore we have
\begin{eqnarray}\label{eprme}
\me_(i;s)&=&\big(\theta_{s-2}^2(n)(-\cl)^{i-1}(i-1)+\theta_{i-2}^2(n)(-\cl)^{s-1}(s-1)\big)HP_n^{[n]}\nonumber\\
&&-2d(i+s-3)(-\cl)^{i+s-4}H^2P_n^{[n]}.\nonumber\\
\end{eqnarray}
\begin{proof}Both terms are zero except for $i=s=2$ when both terms equal $2H^2P_n^{[n]}$.  We are done.  
\end{proof}
\subsection{The proof}
We write $\ma\equiv \mb$ for $\ma,\mb\in H^*(\p(\mi_n),\bq)$ if $\ma-\mb\in\Omega^{\bot}$.  Define $\bar{d}:=\frac{2d-1}{d(d-2)}$, then $\frac{3}{d-2}=d\bar{d}-2$.  By induction assumption we only need to prove 
\begin{eqnarray}\label{eqiii}I+II&\equiv& \bar{d}III-(d\bar{d}-2)(i+j-2)(i+j+k-3)(-\cl_n)^{i+j+k-4}HP_{n}^{[n]}(\cl_n+dH)\nonumber\\&& -(d\bar{d}-2)\big((j-1)\me(i;j+k)+(i-1)\me(j;i+k)\big).\nonumber\\\end{eqnarray}
Define
\begin{eqnarray}\rl(i;s)&:=&\mf(i;s)+\mh(i;s)-2d(-\cl_n)^{i+s-3}HP_{n}^{[n]}\nonumber\\
&&+d(i-1)(s-1)\left(2-\frac 3d+\frac1{d^2}\right)(-\cl_n)^{i+s-3}HP_{n}^{[n]}
\nonumber\\&&+3d\binom{i+s-2}2(-\cl_n)^{i+s-4}H^2P_{n}^{[n]}\nonumber\\
&&-(i-1)(s-1)(-\cl_n)^{i+s-4}H^2P_{n}^{[n]}.\nonumber\\
\rr(i;s)&:=&\mj(i;s)+\mk(i;s)-d^2(-\cl_n)^{i+s-3}HP_{n}^{[n]}\nonumber\\
&+&(i-1)(s-1)\left(d^2-3d+2\right)(-\cl_n)^{i+s-3}HP_{n}^{[n]}
\nonumber\\&+&3d^2\binom{i+s-2}2(-\cl_n)^{i+s-4}H^2P_{n}^{[n]}\nonumber\\
&-&2d(i-1)(s-1)(-\cl_n)^{i+s-4}H^2P_{n}^{[n]}.\nonumber\\
\end{eqnarray}
We have 
\[-I-II=\frac{j-1}d\rl(i;j+k)+\frac{i-1}d\rl(j;i+k);\]
\[-III=\frac{j-1}d\rr(i;j+k)+\frac{i-1}d\rl(j;i+k).\]
The following lemma implies (\ref{eqiii}) and we are done.
\begin{lemma}\label{le1ap}For $i,s\geq 2$, we have $$\rl(i;s)\equiv\bar{d}\rr(i;s)+(d\bar{d}-2)d\big((i+s-3)(-\cl_n)^{i+s-4}HP_n^{[n]}(\cl_n+dH)+\me(i;s)\big).$$
\end{lemma}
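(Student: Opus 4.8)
The plan is to prove the congruence through the criterion of Lemma \ref{omep}: since every class occurring in $\rl(i;s)$, $\rr(i;s)$ and $\me(i;s)$ other than $\cl_n$ and $H=\rho_n^*H$ is pulled back from $S^{[n]}$ along $\phi_n$, it suffices to show that $(\phi_n)_*$ of the difference of the two sides, after multiplication by an arbitrary monomial $(-\cl_n)^bH^a$, vanishes in $A^*(S^{[n]})$. By the projection formula and $(\phi_n)_*(H^a(-\cl_n)^b)=\theta_b^a(n)$ (see (\ref{psl}) and the proof of Lemma \ref{pnt2}), each such pushforward collapses to a polynomial identity in the $\theta_c^e(n)$ weighted by $P_n^{[n]}$ and $P_{n-1}^{[n]}$. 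The key structural observation is that the only term carrying the ``wrong'' weight $P_{n-1}^{[n]}$ is $\mf(i;s)$, which sits inside $\rl(i;s)$; all remaining pieces of $\rl$, together with $\mj,\mk,\me$ and the explicit correction monomials, are already supported on $P_n^{[n]}$. So the real content is to rewrite the $\mf$-contribution over $P_n^{[n]}$.

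First I would dispose of the $P_n^{[n]}$-terms. Multiplying $\mj(i;s)$ by the extra $H$ and using (\ref{svopn}) together with Lemma \ref{pnt31} (from which also $\theta_i^1(n)H^2P_n^{[n]}=0$ and $\theta_i^1(n)(-\cl_n)^jHP_n^{[n]}=-d(-\cl_n)^{i+j-1}HP_n^{[n]}$ for $i\geq2$), every $\theta^1$-factor in $\mh(i;s)$ and $H\cdot\mj(i;s)$ is eliminated, leaving only pure monomials $(-\cl_n)^{\ast}HP_n^{[n]}$ and $(-\cl_n)^{\ast}H^2P_n^{[n]}$. Likewise the $\theta^2$-factors in $\me(i;s)$ are annihilated by Lemma \ref{pnt2}(1), $\theta_c^2(n)P_n^{[n]}=0$ for $c\geq1$, so that only the boundary pieces $i=2$ or $s=2$ survive, which is exactly Lemma \ref{meeq}; by contrast $\mk(i;s)$ retains its $\theta^0$-factors, which cannot be reduced and must instead be produced out of the $\mf$-term.

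The heart is the conversion of $\mf(i;s)$. After pairing with $(-\cl_n)^bH^a$, pushing forward, and expanding $(\cl_n+dH)=-(-\cl_n)+dH$, the only surviving $H$-exponents are $1$ and $2$ (since $H^3=0$ and $\mf$ already carries one explicit $H$); hence $\mf$ contributes exclusively products $\theta^1(n)\,\theta^1(n)\,P_{n-1}^{[n]}$ (from the $-(-\cl_n)$ branch) and $\theta^1(n)\,\theta^2(n)\,P_{n-1}^{[n]}$ (from the $dH$ branch), and no $\theta^1\theta^0$ term ever arises. The $\theta^1\theta^2P_{n-1}^{[n]}$ part collapses to $\tfrac1d\theta^1\theta^1P_n^{[n]}$ by Lemma \ref{pnt2}(2). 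The $\theta^1\theta^1P_{n-1}^{[n]}$ part, after collecting the three summands of the bracket defining $\mf$, organizes into precisely the left-hand combination of the induction hypothesis (\ref{Ag22tp1}) with parameters $(i,s,k')$ where $k'=b-1$; invoking (\ref{Ag22tp1}) rewrites it over $P_n^{[n]}$ and, crucially, generates the $\bar{d}$-weighted $\theta^0\theta^1P_n^{[n]}$ and $\theta^0P_n^{[n]}$ terms matching the $\bar{d}\,\mk(i;s)$ contribution (the accompanying $\theta^2\theta^1P_n^{[n]}$ terms dying again by Lemma \ref{pnt2}(1)).

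With $\mf$ eliminated, both sides become $\bq$-combinations of the independent atoms $\theta_c^0(n)\theta_\bullet^1(n)P_n^{[n]}$, $\theta_\bullet^0(n)P_n^{[n]}$, $(-\cl_n)^{\ast}HP_n^{[n]}$ and $(-\cl_n)^{\ast}H^2P_n^{[n]}$, and the statement reduces to matching their coefficients; this is exactly where the special values $\bar{d}=\tfrac{2d-1}{d(d-2)}$ and $d\bar{d}-2=\tfrac3{d-2}$ are forced, consistently with the seed computation at $n=1$. I expect the main obstacle to be organizational rather than conceptual: correctly propagating the binomial weights coming from $A_i(n),B_i(n),C_i(n)$ through (\ref{anpn}) and its higher analogues, and handling the many degenerate ranges (small $i,s$, or $k'=b-1$ violating the admissibility $i,s,i+k',s+k'\geq2$ of (\ref{Ag22tp1})) by means of the conventions $\theta_i^j(n)=\cl_n^i=0$ for negative indices. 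To keep this in check I would first isolate the two reduction steps — the $\theta^1\theta^2\to\tfrac1d\theta^1\theta^1$ collapse and the vanishings from (\ref{svopn}) and Lemma \ref{pnt31} — as standalone computations, and only then assemble the final coefficient comparison.
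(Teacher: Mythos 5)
Your plan is correct and follows essentially the same route as the paper's proof: reduce via Lemma \ref{omep} to the vanishing of $(\phi_n)_*\big((\text{difference})\cdot(-\cl_n)^tH^a\big)$, isolate $\mf(i;s)$ as the sole carrier of $P_{n-1}^{[n]}$, collapse its $\theta^1\theta^2P_{n-1}^{[n]}$ pieces by Lemma \ref{pnt2}(2), feed the resulting $\theta^1\theta^1P_{n-1}^{[n]}$ combination into the induction hypothesis (\ref{Ag22tp1}) with $k$ shifted by $t-1$ so that the $\bar d$-weighted $\theta^0$-terms match $\bar d(\mj+\mk)$, and then match coefficients. The degenerate ranges you flag are exactly the cases ($\tw(i;s)\cdot H$, and $t=0$ with $i=2$, $s\geq 3$ or $i=s=2$) that the paper disposes of separately at the end, the last two via (\ref{Ag22tp2}).
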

\begin{proof}Notice that $d(2-\frac3d+\frac1{d^2})=(d^2-3d+2)\bar{d}$.  By a direct computation we have\small
\begin{eqnarray}\tw(i;s)&:=&\rl(i;s)-\bar{d}\rr(i;s)-(d\bar{d}-2)d\big((i+s-3)(-\cl_n)^{i+s-4}HP_n^{[n]}(\cl_n+dH)+\me(i;s))\nonumber\\
&=&\mf(i;s)+\mh(i;s)-\bar{d}(\mj(i;s)+\mk(i;s))
\nonumber\\
&&+(d\bar{d}-2)d(i+s-2)(-\cl_n)^{i+s-3}HP_n^{[n]}\nonumber\\
&&-(d\bar{d}-2)d\big(\theta_{s-2}^2(n)(-\cl)^{i-1}(i-1)+\theta_{i-2}^2(n)(-\cl)^{s-1}(s-1)\big)HP_n^{[n]}\nonumber\\
&&+3d(1-d\bar{d})\binom{i+s-2}2(-\cl_n)^{i+s-4}H^2P_{n}^{[n]}\nonumber\\
&&+(2d\bar{d}-1)(i-1)(s-1)(-\cl_n)^{i+s-4}H^2P_{n}^{[n]}\nonumber\\
&&+(d\bar{d}-2)d^2(i+s-3)(-\cl_n)^{i+s-4}H^2P_n^{[n]}.\nonumber\\
%&&-(d\bar{d}-2)d\big(\theta_{s-3}^2(n)(-\cl)^{i}i+\theta_{i-3}^2(n)(-\cl)^{s}s\big)HP_n^{[n]}\nonumber
\end{eqnarray}\normalsize
We have $(-\cl_n)^{i+s-4}H^2P_{n}^{[n]}=0$ except for $i=s=2$ and for $i=s=2$
\begin{eqnarray}&&3d(1-d\bar{d})\binom{i+s-2}{2}+(2d\bar{d}-1)(i-1)(s-1)+(d\bar{d}-2)d^2(i+s-3)
\nonumber\\
&=&3d-3d^2\bar{d}+2d\bar{d}-1+d^3\bar{d}-2d^2=-(2d^2-3d+1)+d\bar{d}(d^2-3d+2)=0.\nonumber
\end{eqnarray}
Therefore
\begin{eqnarray}\tw(i;s)&=&\mf(i;s)+\mh(i;s)-\bar{d}(\mj(i;s)+\mk(i;s))
\nonumber\\
&&+(d\bar{d}-2)d(i+s-2)(-\cl_n)^{i+s-3}HP_n^{[n]}\nonumber\\
&&-(d\bar{d}-2)d\big(\theta_{s-2}^2(n)(-\cl)^{i-1}(i-1)+\theta_{i-2}^2(n)(-\cl)^{s-1}(s-1)\big)HP_n^{[n]}\nonumber\\
%&&-(d\bar{d}-2)d\big(\theta_{s-3}^2(n)(-\cl)^{i}i+\theta_{i-3}^2(n)(-\cl)^{s}s\big)HP_n^{[n]}\nonumber\\
\end{eqnarray}
By Lemma \ref{omep} it is enough to show $$(\phi_n)_*(\tw(i;s) (-\cl_n)^jH^i)=0,\text{ for all }i,j\geq0.$$  

Firstly since $i,s\geq 2$, by (\ref{svopn}) and Lemma \ref{pnt2} we have
\begin{eqnarray}
\tw(i;s)\cdot H&=&(\mf(i;s)+\mh(i;s))H\nonumber\\
&=&\frac1d\big(\theta_i^1(n)(-\cl_n)^{s-2}(s-1)+\theta_{s}^1(n)(-\cl_n)^{i-2}(i-1)\big)HP_{n}^{[n]}(\cl_n+dH)\nonumber\\
&&+(-\cl_n)^{i+s-3}(i+s-2)HP_{n}^{[n]}(\cl_n+dH)=0\nonumber\\
\end{eqnarray}

Now we only need to show  
\[(\phi_n)_*(\tw(i;s)(-\cl_n)^j)=0,\forall~j\geq 0.\]
For $t\geq 0,s+t\geq3,i+t\geq 3$ by Lemma \ref{pnt2} and (\ref{psl}) we have 
\begin{eqnarray}
&&(\phi_n)_*((\mf(i;s)+\mh(i;s))(-
\cl_n)^t)\nonumber\\
&&\qquad=-\big((s-1)\theta_i^1(n)\theta^1_{s+t-1}(n)+(i-1)\theta_{s}^1(n)\theta^1_{i+t-1}(n)\big)P_{n-1}^{[n]}\nonumber\\
&&\qquad\quad-d(i+s-2)\theta^1_{i+s+t-2}(n)P_{n-1}^{[n]}+2(\phi_n)_*(\mh(i;s))(-
\cl_n)^t);\nonumber\\
&&(\phi_n)_*(\mj(i;s)(-
\cl_n)^t)\nonumber\\&&\qquad =-((s-1)\theta_i^1(n)\theta_{s+t-1}^0+(i-1)\theta_{s}^1(n)\theta^0_{i+t-1}(n))P_{n}^{[n]}\nonumber\\
&&\qquad\quad-d(i+s-2)\theta_{i+s-2}^0(n)P_{n}^{[n]}+d(\phi_n)_*(\mh(i;s))(-
\cl_n)^t);\nonumber\\
&&(\phi_n)_*(\mk(i;s)(-
\cl_n)^t)\nonumber\\
&&\qquad=-\big((s-1)\theta_i^0(n)\theta^1_{s+t-1}(n)+(i-1)\theta_{s}^0(n)\theta^1_{i+t-1}(n)\big)P_{n}^{[n]}\nonumber\\
\end{eqnarray}
Notice that by (\ref{svopn}) for $t\geq0,t+s\geq3,t+i\geq3$
\begin{eqnarray}\mh(i;s)(-
\cl_n)^t)&=&\big(\theta_i^1(n)(-\cl_n)^{s+t-2}(s-1)+\theta_{s}^1(n)(-\cl_n)^{i+t-2}(i-1)\big)HP_{n}^{[n]}\nonumber\\
&&+d(-\cl_n)^{i+s+t-3}(i+s-2)HP_{n}^{[n]};\nonumber
\\\nonumber
&=&\begin{cases}0,\qquad\qquad\qquad\qquad\qquad\qquad\qquad\text{ for }i+t,s+t\geq 4\\ d(i-1)\theta_s^1(n)H^2P_{n}^{[n]},\qquad\qquad\qquad\text{ for }i+t=3,s+t\geq 4 \\ d\big((i-1)\theta_s^1(n)+(s-1)\theta_i^1(n)\big)H^2P_{n}^{[n]},\text{ for }i+t=s+t=3\end{cases}.
\end{eqnarray}
Therefore for $t\geq 0,s+t\geq3,i+t\geq 3$ by induction assumption we have
\begin{eqnarray}&&-d^{-1}(d\bar{d}-2)^{-1}(\phi_n)_*(\tw(i;s) (-\cl_n)^t)\nonumber\\
&=&\frac1d(\phi_n)_*(\mh(i;s)(-\cl_n)^t))-\big((i-1)\theta_{i+t-3}^2(n)\theta_{s}^1(n)+(s-1)\theta_{s+t-3}^2(n)\theta_i^1(n)\big)\nonumber\\&=&0\nonumber%&&-\big((i-1)\theta_{i+t-4}^0(n)\theta_{s+1}^1(n)+(s-1)\theta_{s+t-4}^0\theta_{i+1}^1(n)\big)\nonumber\\
%&&+\big(\theta_{s-3}^0(n)\theta_{i+t}^1(n)+\theta_{i-3}^0(n)\theta_{s+t}^1(n)\big)\nonumber
\end{eqnarray}
Now we are left for two cases: $t=0,i=2$,$s\geq3$ and $t=0,i=s=2$. 
We have 
\begin{eqnarray}&&(\phi_n)_*(\mf(2;s\geq3))\nonumber\\
&=&-(s-1)\theta_2^1(n)\theta^1_{s-1}(n)P_{n-1}^{[n]}-ds\theta^1_{s}(n)P_{n-1}^{[n]}+(\phi_n)_*(\mh(i;s))+d\theta_s^1(n)P_{n-1}^{[n]}\nonumber\\
&=&-(s-1)\big(\theta_2^1(n)\theta^1_{s-1}(n)+d\theta^1_{s}(n)\big)P_{n-1}^{[n]}+(\phi_n)_*(\mh(i;s)).\nonumber
\end{eqnarray}
By induction assumption and (\ref{Ag22tp2}) we have
\begin{eqnarray}&&\frac1d\big(\theta_i^1(n)\theta^1_{j}(n)+d\theta^1_{i+j-1}(n)\big)P_{n-1}^{[n]}\nonumber\\
&=&\bar{d}\left(\frac1d(\theta_i^0\theta_j^1(n)+\theta_j^0(n)\theta_i^1(n))+\theta_{i+j-1}^0(n)\right)P_n^{[n]}\nonumber\\
&&+(d\bar{d}-2)\theta_{i+j-2}^1(n)P_n^{[n]}-(d\bar{d}-2)\big(\theta_i^1(n)\theta_{j-2}^2(n)+\theta_{i-2}^2(n)\theta_{j}^1(n)\big)P_n^{[n]}.\nonumber
\end{eqnarray}

Therefore by a direct computation we have
 \begin{eqnarray}&&-(s-1)^{-1}d^{-1}(\phi_n)_*(\tw(2;s\geq 3))\nonumber\\
&=&\left(\frac1d\theta_2^1(n)\theta_{s-1}^1(n)+\theta_{s}^1(n)\right)P_{n-1}^{[n]}-\bar{d}\left(\frac1d(\theta_2^0(n)\theta_{s-1}^1(n)+\theta_{s-1}^0(n)\theta_2^1(n))+\theta_{s}^0(n)\right)P_n^{[n]}\nonumber\\
&&+(d\bar{d}-2)\theta_{s-1}^1(n)P_n^{[n]}-(d\bar{d}-2)\theta_{s-1}^1(n)P_n^{[n]}=0.\nonumber
\end{eqnarray}
Finally 
\[(\phi_n)_*(\mf(2;2))=(d\theta_2^1(n)+d\theta_2^1(n)-2d\theta^1_{2}(n))P_{n-1}^{[n]}=0.\]
By a direct computation we have
\[(\phi_n)_*(\tw(2;2))=-\bar{d}(\phi_n)_*(\mj(2,2)+\mk(2,2))=-\bar{d}\big(-2d\theta_{2}^0(n)+2d\theta_2^0(n)\big)P_{n}^{[n]}=0.\]
\end{proof}
\section{The proof of Lemma \ref{pnt41}.}\label{pl41}
Recall that we want to prove that in $H^*(S^{[n]},\bq)$ for all $i,j,i+k,j+k\geq2$
\begin{eqnarray}\label{Ag3tp1}&&\qquad(j-i)\big(\theta_{i+j+k-1}^0(n)-d\theta_{i+j+k-2}^1(n)\big)P_{n}^{[n]}=\nonumber\\
&&P_n^{[n]}\left[\frac{j-1}{d}\big(\theta_i^0(n)\theta_{j+k}^1(n)-\theta_i^1(n)\theta_{j+k}^0(n)\big)-\frac{i-1}{d}\big(\theta_j^0(n)\theta_{i+k}^1(n)-\theta_{j}^1(n)\theta_{i+k}^0(n)\big)\right]\nonumber\\&&+d(j-1)\big(\theta_i^1(n)\theta_{j+k-2}^2(n)-\theta_{i-2}^2(n)\theta_{j+k}^1(n)\big)P_n^{[n]}\nonumber\\
&&-d(i-1)\big(\theta_{j}^1(n)\theta_{i+k-2}^2(n)-\theta_{j-2}^2(n)\theta_{i+k}^1(n)\big)P_n^{[n]}\nonumber\\
\end{eqnarray}
In particular let $i,j\geq2,k=0$, then we have\small
\begin{eqnarray}\label{Ag3tp2}&&\qquad(j-i)\big(\theta_{i+j-1}^0(n)-d\theta_{i+j-2}^1(n)\big)P_{n}^{[n]}=\nonumber\\
&&P_n^{[n]}\left[\frac{i+j-2}{d}\big(\theta_i^0(n)\theta_{j}^1(n)-\theta_i^1(n)\theta_{j}^0(n)\big)+d(j+i-2)\big(\theta_i^1(n)\theta_{j-2}^2(n)-\theta_{i-2}^2(n)\theta_{j}^1(n)\big)\right]\nonumber\\\end{eqnarray}
\normalsize

Since both sides of (\ref{Ag3tp1}) are antisymmetric on $i,j$, we may assume $j>i$.  We do induction on $n$.  When $n=1$, (\ref{Ag3tp1}) holds by dimension reason.  

We will compare the pullbacks of LHS and RHS of (\ref{Ag3tp1}) to $\p(\mi_n)$ via $\psi_n:\p(\mi_n)\ra S^{[n+1]}$.
\subsection{The computation on the LHS and RHS}  
The LHS of (\ref{Ag3tp1}) is relatively simpler.  By (\ref{anpn}), (\ref{svopn}) and (\ref{bnpnh}) for $i\geq 2,j\geq3$ on $\p(\mi_n)$ we have
\begin{eqnarray}\label{ex4lhs1}&&(j-i)\big(\theta_{i+j+k-2}^1(n+1)P_{n+1}^{[n+1]}-\theta_{i+j+k-2}^1(n)P_n^{[n]}(\cl_n+dH)\big)
\nonumber\\
&=&-(j-i)A_{i+j+k-2}(n)HP_n^{[n]}(\cl_n+dH)\nonumber\\
&=&(j-i)(i+j+k-3)(-\cl_n)^{i+j+k-3}HP_n^{[n]}
\end{eqnarray}
and
\begin{eqnarray}\label{ex4lhs}&&(j-i)\big(\theta_{i+j+k-1}^0(n+1)P_{n+1}^{[n+1]}-\theta_{i+j+k-1}^0(n)P_n^{[n]}(\cl_n+dH)\big)
\nonumber\\
&=&-(j-i)(A_{i+j+k-1}(n)+B_{i+j+k-1}(n)H)P_n^{[n]}(\cl_n+dH)\nonumber\\
&=&(j-i)(i+j+k-2)(-\cl_n)^{i+j+k-2}P_n^{[n]}-d(j-i)(i+j+k-2)(-\cl_n)^{i+j+k-3}HP_n^{[n]}\nonumber\\
&&-3(j-i)\binom{i+j+k-2}{2}(-\cl_n)^{i+j+k-4}HP_n^{[n]}(\cl_n+dH)\nonumber\\
&&-d(j-i)\binom{i+j+k-3}2(-\cl_n)^{i+j+k-3}HP_n^{[n]}\nonumber\\
&=&(j-i)(i+j+k-2)(-\cl_n)^{i+j+k-2}P_n^{[n]}%-d(j-i)(i+j+k-2)(-\cl_n)^{i+j+k-3}HP_n^{[n]}
\nonumber\\
&&-3(j-i)\binom{i+j+k-2}{2}(-\cl_n)^{i+j+k-4}HP_n^{[n]}(\cl_n+dH)\nonumber\\
&&-d(j-i)\left(\binom{i+j+k-2}2+1\right)(-\cl_n)^{i+j+k-3}HP_n^{[n]}\nonumber\\
\end{eqnarray}
Now we deal with the RHS of (\ref{Ag3tp1}).
We can compute that
\begin{eqnarray}\label{strhs4}&&\big(\theta_i^1(n+1)\theta_{j+k-2}^2(n+1)-\theta_{i-2}^2(n+1)\theta_{j+k}^1(n+1)\big)P_{n+1}^{[n+1]}\nonumber\\
&=&(\theta_i^1(n)\theta_{j+k-2}^2(n)-\theta_{i-2}^2(n)\theta_{j+k}^1(n))P_{n}^{[n]}(\cl_n+dH)+\mr(i;j+k),\nonumber\\
\end{eqnarray}
%\begin{eqnarray}\label{strhs3}&&\big(\theta_{i+1}^1(n+1)\theta_{j+k-3}^2(n+1)+\theta_{i-3}^2(n+1)\theta_{j+k+1}^1(n+1)\big)P_{n+1}^{[n+1]}\nonumber\\
%&=&(\theta_{i+1}^1(n)\theta_{j+k-3}^2(n)+\theta_{i-3}^2(n)\theta_{j+k+1}^1(n))P_{n}^{[n]}(\cl_n+dH)+\me_2(i;j+k),\nonumber\\
%\end{eqnarray}
where for $i,s\geq 2$
\begin{eqnarray}
\mr(i;s)&:=&\big(\theta_{s-2}^2(n)(-\cl_n)^{i-1}(i-1)-\theta_{i-2}^2(n)(-\cl_n)^{s-1}(s-1)\big)HP_n^{[n]}\nonumber\\
&&-d\big(\theta_{s-2}^2(n)(-\cl_n)^{i-2}(i-1)-\theta_{i-2}^2(n)(-\cl_n)^{s-2}(s-1)\big)H^2P_n^{[n]};\nonumber\\
%\me_2(i;s)&:=&\big(\theta_{s-3}^2(n)(-\cl)^{i}i+\theta_{i-3}^2(n)(-\cl)^{s}s\big)HP_n^{[n]}.\nonumber
%&&-d\big(\theta_{s-2}^2(n)(-\cl_n)^{i-2}(i-1)+\theta_{i-2}^2(n)(-\cl_n)^{s-2}(s-1)\big)H^2P_n^{[n]}.\nonumber
\end{eqnarray}
Moreover for $i+s\geq5$ we have
\begin{eqnarray}\label{eprme4}
\mr_(i;s)&=&\big(\theta_{s-2}^2(n)(-\cl_n)^{i-1}(i-1)-\theta_{i-2}^2(n)(-\cl_n)^{s-1}(s-1)\big)HP_n^{[n]}\nonumber\\
%&&-2d(i+s-3)(-\cl)^{i+s-4}H^2P_n^{[n]}.\nonumber
\end{eqnarray}

For the rest terms of RHS we have
\begin{eqnarray}&&P_{n+1}^{[n+1]}\left[\frac{j-1}{d}\big(\theta_i^0(n+1)\theta_{j+k}^1(n+1)-\theta_i^1(n+1)\theta_{j+k}^0(n+1)\big)\right.\nonumber\\&&\left.-\frac{i-1}{d}\big(\theta_j^0(n+1)\theta_{i+k}^1(n+1)-\theta_{j}^1(n+1)\theta_{i+k}^0(n+1)\big)\right]\nonumber\\
&=&P_n^{[n]}(\cl_n+dH)\left[\frac{j-1}{d}\big(\theta_i^0(n)\theta_{j+k}^1(n)-\theta_i^1(n)\theta_{j+k}^0(n)\big)\right.\nonumber\\&&\left.-\frac{i-1}{d}\big(\theta_j^0(n)\theta_{i+k}^1(n)-\theta_{j}^1(n)\theta_{i+k}^0(n)\big)\right]\nonumber\\
&&-\frac{j-1}d(\mg(i;j+k)-\mg(j+k;i))+\frac{i-1}d(\mg(j;i+k)-\mg(i+k;j)),\nonumber\\
\end{eqnarray}  
where for $i,s\geq 2$
\begin{eqnarray}\label{defgr}
\mg(i;s)&:=&\big(\theta_s^1(n)A_i(n)+\theta_s^1(n)B_i(n)H+\theta_i^0(n)A_s(n)H\big)P_n^{[n]}(\cl_n+dH)\nonumber\\
&=&-(i-1)\theta_s^1(n)(-\cl_n)^{i-1}P_n^{[n]}+d(i-1)\theta_s^1(n)(-\cl_n)^{i-2}HP_n^{[n]}\nonumber\\
&&-(s-1)\theta_i^0(n)(-\cl_n)^{s-1}HP_n^{[n]}+d(s-1)\theta_i^0(n)(-\cl_n)^{s-2}H^2P_n^{[n]}\nonumber\\
&&+3d\binom{i-1}{2}(-\cl_n)^{i+s-3}HP_n^{[n]}\nonumber\\
&&+d\binom{i-2}{2}\theta_s^1(n)(-\cl_n)^{i-2}HP_n^{[n]}\nonumber\\
&=&-(i-1)\theta_s^1(n)(-\cl_n)^{i-1}P_n^{[n]}+3d\binom{i-1}{2}(-\cl_n)^{i+s-3}HP_n^{[n]}\nonumber\\
&&-(s-1)\theta_i^0(n)(-\cl_n)^{s-1}HP_n^{[n]}+d(s-1)\theta_i^0(n)(-\cl_n)^{s-2}H^2P_n^{[n]}\nonumber\\
&&+d\left(\binom{i-1}{2}+1\right)\theta_s^1(n)(-\cl_n)^{i-2}HP_n^{[n]}\nonumber\\
%\mr(i;s)&:=&(\theta_{s}^1(n)A_i(n)+\theta_i^1(n)A_{s}(n))HP_n^{[n]}(\cl_n+dH)\nonumber\\&=&d(i+s-2)(-\cl_n)^{i+s-2}HP_n^{[n]}.\nonumber\\
\end{eqnarray}
The two equations in (\ref{defgr}) are because of (\ref{anpn}), (\ref{svopn}) and (\ref{bnpnh}).

\subsection{The proof.}
For $i,s\geq 2$, $i+s\geq5$ define
\begin{eqnarray}
\rl_1(i;s)&:=&d(i+s-2)(-\cl_n)^{i+s-2}P_n^{[n]}%-d(j-i)(i+j+k-2)(-\cl_n)^{i+j+k-3}HP_n^{[n]}
\nonumber\\
&&+3d\binom{i+s-2}{2}(-\cl_n)^{i+s-3}HP_n^{[n]}\nonumber\\
&&+d^2\binom{i+s-1}2(-\cl_n)^{i+s-3}HP_n^{[n]}=\rl_1(s;i)\nonumber\\
\rr_1(i;s)&:=&\mg(i;s)-\mg(s;i)-d^2\mr(i;s).\nonumber\\
\end{eqnarray}

Then by induction assumption
\[LHS-RHS=\frac{j-i}d\rl_1(i;j+k)+\big(\frac{j-1}d\rr_1(i;j+k)-\frac{i-1}d\rr_1(j;i+k)\big).\]
Therefore by Lemma \ref{omep} it suffices to show that for any $k,t\geq0$, $j>i\geq2$, 
\begin{equation}(\phi_n)_*\big((LHS-RHS)(-\cl_n)^tH^k\big)=0.\end{equation}
Since $j>i\geq2$, by Lemma \ref{pnt2} (4) we have
\begin{eqnarray}
\frac{j-i}d\rl_1(i;j+k)\cdot H&=&(j-i)(i+j+k-2)(-\cl_n)^{i+j+k-2}HP_n^{[n]};\nonumber\\
\frac{j-1}d\rr_1(i;j+k)\cdot H&=&\frac{j-1}d\big((i-1)-(j+k-1)\big)(-\cl_n)^{i+j+k-2}HP_n^{[n]}\nonumber\\
&=&\frac{j-1}d\big(2(i-1)-(i+j+k-2)\big)(-\cl_n)^{i+j+k-2}HP_n^{[n]}.
\end{eqnarray}
Therefore $$\frac{j-i}d\rl_1(i;j+k)\cdot H=-\frac{j-1}d\rr_1(i;j+k)\cdot H+\frac{i-1}d\rr_1(j;i+k)\cdot H.$$

We only need to show that for any $t\geq0$, $j>i\geq2$, 
\begin{equation}(\phi_n)_*\big((LHS-RHS)(-\cl_n)^t\big)=0.\end{equation}
For $t\geq 0$ and $i,s\geq 2$ we have
\begin{eqnarray}\label{mgva}&&(\phi_n)_*(\mg(i;s)(-\cl_n)^t)\nonumber\\
&=&-(i-1)\theta_s^1(n)\theta_{i+t-1}^0(n)P_n^{[n]}+d(s-1)\theta_i^0(n)\theta_{s+t-2}^2(n)P_n^{[n]}\nonumber\\
&&-(s-1)\theta_i^0(n)\theta_{s+t-1}^1(n)P_n^{[n]}+3d\binom{i-1}{2}\theta_{i+s+t-3}^1(n)P_n^{[n]}\nonumber\\
&&+d\left(\binom{i-1}{2}+1\right)\theta_s^1(n)\theta_{i+t-2}^1(n)P_n^{[n]}\nonumber\\
\end{eqnarray}
Notice that by Lemma \ref{pnt31} (1) we have for $t\geq0$, $i,s\geq2$
\begin{eqnarray}&&\left(\binom{i-1}{2}+1\right)\theta_s^1(n)\theta_{i+t-2}^1(n)P_n^{[n]}\nonumber\\
&=&\begin{cases}
-d\left(\binom{i-1}{2}+1\right)\theta_{i+s+t-3}^1(n)P_n^{[n]},\text{ for }i+t\geq 4\\ 0,\qquad\qquad\text{ for }i+t=2,3\end{cases}
\end{eqnarray}
Therefore for $t\geq0$, $t+s,t+i\geq3$, $i,s\geq 2$ we have
\begin{eqnarray}&&\left(\binom{i-1}{2}+1\right)\theta_s^1(n)\theta_{i+t-2}^1(n)P_n^{[n]}\nonumber\\
&=&
-d\left(\binom{i-1}{2}+1\right)\theta_{i+s+t-3}^1(n)P_n^{[n]}+d(i-1)\theta_{i+t-3}^2(n)\theta_s^1(n)P_n^{[n]}.\nonumber\end{eqnarray}
Hence by Lemma \ref{pnt2} (1) and Lemma \ref{pnt31} (1) for $t\geq 0$, $s+t,i+t\geq 3$ and $i,s\geq 2$ we have
\begin{eqnarray}&&(\phi_n)_*(\big(\mg(i;s)-\mg(s;i)\big)(-\cl_n)^t)\nonumber\\
&=&(i-1)\big(\theta_s^0(n)\theta_{i+t-1}^1(n)-\theta_s^1(n)\theta_{i+t-1}^0(n)\big)P_n^{[n]}\nonumber\\
&&-(s-1)\big(\theta_i^0(n)\theta_{s+t-1}^1(n)-\theta_i^1(n)\theta_{s+t-1}^0(n)\big)P_n^{[n]}\nonumber\\
&&+3d\left(\binom{i-1}{2}-\binom{s-1}{2}\right)\theta_{i+s+t-3}^1(n)P_n^{[n]}\nonumber\\
&&-d^2\left(\binom{i-1}{2}-\binom{s-1}2\right)\theta_{i+s+t-3}^1(n)P_n^{[n]}\nonumber\\
&&+d^2\big((i-1)\theta_{i+t-3}^2(n)\theta_s^1(n)-(s-1)\theta_{s+t-3}^2(n)\theta_i^1(n)\big)P_n^{[n]}.\nonumber
\end{eqnarray}
Hence
\begin{eqnarray}&&(\phi_n)_*(\rr_1(i;s)(-\cl_n)^t)=(\phi_n)_*(\big(\mg(i;s)-\mg(s;i)-d^2\mr(i;s)\big)(-\cl_n)^t)\nonumber\\
&=&(i-1)\big(\theta_s^0(n)\theta_{i+t-1}^1(n)-\theta_s^1(n)\theta_{i+t-1}^0(n)\big)P_n^{[n]}\nonumber\\
&&-(s-1)\big(\theta_i^0(n)\theta_{s+t-1}^1(n)-\theta_i^1(n)\theta_{s+t-1}^0(n)\big)P_n^{[n]}\nonumber\\&&+3d\left(\binom{i-1}{2}-\binom{s-1}{2}\right)\theta_{i+s+t-3}^1(n)P_n^{[n]}\nonumber\\
&&-d^2\left(\binom{i-1}{2}-\binom{s-1}2\right)\theta_{i+s+t-3}^1(n)P_n^{[n]}\nonumber\\
&&+d^2\big((i-1)\theta_{i+t-3}^2(n)\theta_s^1(n)-(s-1)\theta_{s+t-3}^2(n)\theta_i^1(n)\big)P_n^{[n]}\nonumber\\&&-d^2\big((i-1)\theta_{i+t-1}^1(n)\theta_{s-2}^2(n)-(s-1)\theta_{s+t-1}^1(n)\theta_{i-2}^2(n)P_n^{[n]}.
\nonumber
\end{eqnarray}
By induction assumption we have
\begin{eqnarray}\label{rr1n}&&(\phi_n)_*(\rr_1(i;s)(-\cl_n)^t)\nonumber\\
&=&d(i-s)\big(\theta_{i+s+t-2}^0(n)-d\theta_{i+s+t-3}^1(n)\big)P_n^{[n]}\nonumber\\
&&+3d\left(\binom{i-1}{2}-\binom{s-1}{2}\right)\theta_{i+s+t-3}^1(n)P_n^{[n]}\nonumber\\
&&-d^2\left(\binom{i-1}{2}-\binom{s-1}2\right)\theta_{i+s+t-3}^1(n)P_n^{[n]}\nonumber\\
\end{eqnarray}
Notice that we have
\[(j-1)(i-(j+k))-(i-1)(j-(i+k))=-(j-i)(j+k+i-2);\]
and for any $v,u,w$
\begin{eqnarray}\label{dff4}&&(j-1)\left(\frac{(i-1)(i-1+v)}{2}-\frac{(j+w-1)(j+w-1+u)}{2}\right)\nonumber\\
&&-(i-1)\left(\frac{(j-1)(j-1+v)}{2}-\frac{(i+w-1)(i+w-1+u)}{2}\right)\nonumber\\
&=&-(j-i)\frac{(i+j+w-2)(i+j+w+u-2)}{2}\nonumber\\
\end{eqnarray}
Hence for $t\geq 0,i+t\geq 3$
\begin{eqnarray}&&(\phi_n)_*\left(\big(\frac{j-1}d\rr_1(i;j+k)-\frac{i-1}d\rr_1(j;i+k)\big)(-\cl_n)^t\right)\nonumber\\
&=&-(j-i)(i+j+k-2)\big(\theta_{i+j+k+t-2}^0(n)-d\theta_{i+j+k+t-3}^1(n)\big)P_n^{[n]}\nonumber\\
&&-3(j-i)\binom{i+j+k-2}{2}\theta_{i+j+k+t-3}^1(n)P_n^{[n]}\nonumber\\
&&+(j-i)d\binom{i+j+k-2}{2}\theta_{i+j+k+t-2}^1(n)P_n^{[n]}\nonumber\\
&=&-(\phi_n)_*\big(\frac{j-i}d\rl_1(i;j+k)(-\cl_n)^t\big),\nonumber
\end{eqnarray}
where the last equation is because $\binom{i+j+k-2}{2}+i+j+k-2=\binom{i+j+k-1}{2}$.

Now we are only left to the two cases: $t=0,i=2,k\geq 1,j\geq 3$ and $t=k=0,i=2,j\geq 3$.

Easy to see we always have $\rr_1(i;s)=-\rr_1(s;i)$.  By (\ref{mgva}) we have
\begin{eqnarray}&&(\phi_n)_*(\rr_1(2;s\geq 3))=(\phi_n)_*(\mg(2;s\geq 3)-\mg(s\geq3;2)-d^2\mr(2,s\geq3))\nonumber\\
&=&(s-1)(\theta_2^1(n)\theta_{s-1}^0(n)-\theta_2^0(n)\theta_{s-1}^1(n))P_n^{[n]}-d\theta_s^0(n)P_n^{[n]}\nonumber\\
&&-3d\binom{s-1}{2}\theta_{s-1}^1(n)P_n^{[n]}-d\left(\binom{s-1}{2}+1\right)\theta_2^1(n)\theta_{s-2}^1(n)P_n^{[n]}\nonumber\\
&&+d^2(s-1)\theta_{s-1}^1(n)P_n^{[n]}.\nonumber
\end{eqnarray}
By induction assumption and (\ref{Ag3tp2}) we have
\begin{eqnarray}&&(s-1)(\theta_2^1(n)\theta_{s-1}^0(n)-\theta_2^0(n)\theta_{s-1}^1(n))P_n^{[n]}
\nonumber\\
&=&-(s-3)d\big(\theta_s^0(n)-d\theta_{s-1}^1(n)\big)P_n^{[n]}-d^2(s-1)\theta_{s-1}^1(n).
\end{eqnarray}
Therefore
\begin{eqnarray}\label{rr1s}&&(\phi_n)_*(\rr_1(2;s\geq 3))\nonumber\\
&=&d(2-s)\big(\theta_s^0(n)-d\theta_{s-1}^1(n)\big)P_n^{[n]}-d^2\theta_{s-1}^1(n)P_n^{[n]}\nonumber\\
&&-3d\binom{s-1}{2}\theta_{s-1}^1(n)P_n^{[n]}-d\left(\binom{s-1}{2}+1\right)\theta_2^1(n)\theta_{s-2}^1(n)P_n^{[n]}\nonumber\\
&=&d(2-s)\big(\theta_s^0(n)-d\theta_{s-1}^1(n)\big)P_n^{[n]}\nonumber\\
&&+3d\left(\binom{i-1}{2}-\binom{s-1}{2}\right)\theta_{i+s+t-3}^1(n)P_n^{[n]}\nonumber\\
&&-d^2\left(\binom{i-1}{2}-\binom{s-1}2\right)\theta_{s-1}^1(n)P_n^{[n]}\nonumber\\
&&-d(s-1)\theta_{s-3}^2(n)\theta_2^1(n)P_n^{[n]}.\nonumber\\
\end{eqnarray}
By (\ref{rr1s}) we see that (\ref{rr1n}) also holds for $(\phi_n)_*(\rr_1(2;s\geq 4))$, and hence we are done for cases $t=0,i=2,k\geq 1,j\geq3$ and $t=k=0,i=2,j\geq4$.  

We are left to only one case: $t=k=0,i=2,j=3$.  By (\ref{rr1s}) we have
\begin{eqnarray}&&(\phi_n)_*\left(\big(\frac{2}d\rr_1(2;3)-\frac{1}d\rr_1(3;2)\big)\right)=\frac3d(\phi_n)_*\rr_1(2;3)\nonumber\\
&=&-\frac3d\big(d\theta_3^0(n)+(3d+2d^2)\theta_2^1(n)\big)\nonumber\\
&=&-(\phi_n)_*\big(\frac{1}d\rl_1(2;3)\big),\nonumber
\end{eqnarray}

We have finished the proof.

%%%%%%%%%%%%%%%%%%.    References.    %%%%%%%%%%%

Yao Yuan\\
Beijing National Center for Applied Mathematics,\\
Academy for Multidisciplinary Studies, \\
Capital Normal University, 100048, Beijing, China\\
E-mail: 6891@cnu.edu.cn.
\end{document}